\font \boldfrak eufb10 at 12 pt
\def\bfr#1{\hbox{\boldfrak #1}}
\newtheorem{theorem}{Theorem}[section]
\newtheorem{proposition}[theorem]{Proposition}
\newtheorem{lemma}[theorem]{Lemma}
\newtheorem{corollary}[theorem]{Corollary}
\newtheorem{definition}[theorem]{Definition}
\numberwithin{equation}{section}
\DeclareMathOperator{\Hom}{Hom}
\DeclareMathOperator{\Ad}{Ad}
\DeclareMathOperator{\tr}{tr}
\DeclareMathOperator{\disc}{disc}
\DeclareMathOperator{\Hasse}{Hasse}
\DeclareMathOperator{\Int}{Int}
\begin{document}

\def\boxit#1{\vbox{\hrule
\hbox{\vrule\kern1pt
\vbox{\kern1pt#1\kern1pt}
\kern-3pt\vrule}\hrule}}

\def\J{{\mathcal J}}

\def\ladic{\overline{\Q}_\ell}
\def\cg{{\frak c}}
\def\s{{\frak s}}
\def\mg{{\frak m}}
\def\h{{\frak h}}
\def\sg{{\frak s}}
\def\tg{\mathfrak t}
\def\kbb{\mbox{k \kern-.72em k}}
\def\kbbl{{\rm k\kern-.5em k}}
\def\kbbll{{\rm k\kern-.4em k}}
\def\im{{\rm im\,}}
\def\ad{{\rm ad}}
\def\Ad{{\rm Ad}}
\def\ni{\noindent}
\def\bA{{\bf A}}
\def\bB{{\bf B}}
\def\bG{{\bf G}}
\def\bP{{\bf P}}
\def\bM{{\bf M}}
\def\bS{{\bf S}}
\def\bW{{\bf W}}
\def\bL{{\bf L}}
\def\bN{{\bf N}}
\def\bU{{\bf U}}
\def\bX{{\bf X}}
\def\cS{{\mathcal S}}
\def\bH{{\bf H}}
\def\bZ{{\bf Z}}
\def\F{{\mathbb F}}
\def\C{{\mathbb C}}
\def\bs{\backslash}
\def\cA{{\mathcal A}}
\def\cF{{\mathcal F}}
\def\bT{{\bf T}} 
\def\cB{{\mathcal B}}
\def\cJ{{\mathcal J}}
\def\cO{{\mathcal O}}
\def\ind{{\rm ind}}
\def\bGU{{\bf U}}
\def\GL{{\rm GL}}
\def\On{{\rm O}}
\def\SOn{{\rm SO}}
\def\gF{{\mathfrak F}}
\def\gO{{\mathfrak O}}
\def\gP{{\mathfrak P}}
\def\gal{{\rm Gal}}
\def\g{{\mathfrak g}}
\def\bg{{\mathbf{\frak g}}}
\def\half{\frac{1}{2}}
\def\Fq{{\mathbb F}_q}
\def\Fqq{{\mathbb F}_{q^2}}
\def\Fqb{\overline{{\mathbb F}}_q}
\def\t{\kern.1em{}^t\kern-.1em}
\def\2by2#1#2#3#4{\hbox{$\left( 
\begin{array}{cc}
{#1}&{#2}\\ {#3}&  {#4}\end{array}\right)$}}
\def\A{{\mathbb A}}
\def\Q{{\mathbb Q}}
\def\R{{\mathbb R}}
\def\cN{{\mathcal N}}
\def\Z{{\mathbb Z}}
\def\gB{\cB}
\def\ord{{\rm ord}}
\def\fW{\mathfrak{W}}
\def\fJ{\mathfrak {J}}

\def\cH{{\mathcal H}}
\def\cZ{{\mathcal Z}}
\def\cM{{\mathcal M}}
\def\cP{{\mathcal P}}
\def\cO{{\mathcal O}}
\def\vectr#1#2{\binom{#1}{#2}}

\def\Fun{{F^{un}}}
\def\b{{\frak b}}
\def\f{{\mathfrak f}}
\def\g{{\mathfrak g}}
\def\k{{\frak k}}
\def\gl{{\frak{gl}}}
\def\ind{\hbox{ind}}
\def\End{\hbox{End}}
\def\v#1{\underline{#1}}
\def\m#1{\underline{\underline{#1}}}

\title[Distinguished Supercuspidals and Odd Orthogonal Periods]{Distinguished Tame Supercuspidal Representations and Odd Orthogonal Periods}
\author{Jeffrey Hakim}
\email{jhakim@american.edu}
\address{Department of Mathematics and Statistics\\
American University\\
4400 Massachusetts Avenue, NW\\
Washington, DC 20016\\ USA}

\author{Joshua Lansky}
\email{lansky@american.edu}
\address{Department of Mathematics and Statistics\\
American University\\
4400 Massachusetts Avenue, NW\\
Washington, DC 20016\\ USA}

\subjclass[2010]{22E50, 11F70 (primary), 11F67, 11E08, 11E81 (secondary).}
\keywords{supercuspidal representation, involution, distinguished representation, orthogonal group.}
\thanks{Both authors were supported by NSF grant DMS-0854844.}

\begin{abstract}
We further develop and simplify the general theory of distinguished tame supercuspidal representations of reductive $p$-adic groups due to Hakim and Murnaghan, as well as the analogous theory for finite reductive groups due to Lusztig.  We apply our results to study the representations of $\GL_n(F)$, with $n$ odd and $F$ a nonarchimedean local field, that are distinguished with respect to an orthogonal group in $n$ variables.  In particular, we determine precisely when a supercuspidal representation is distinguished with respect to an orthogonal group and, if so, that the space of distinguishing linear forms has dimension one.
\end{abstract}

\maketitle


\section{Introduction}

This paper has two objectives: (1) to further develop and simplify the general theory presented in \cite{HM} of distinguished tame supercuspidal representations, and (2) to apply this general theory to a particularly important class of examples, namely, the representations of $\GL_n(F)$, with $n$ odd and $F$ a nonarchimedean local field (under some restrictions), that are distinguished with respect to an orthogonal group in $n$ variables.

\subsection{General theory}

Generally speaking, we will use the notations and terminology of \cite{HM}.  We also impose the same ``tameness'' assumptions on the data used to define our representations.  For simplicity, we do not recall all of these conventions explicitly in this introduction.

We are interested in the tame supercuspidal representations of a given group $G= \bG (F)$, where $F$ is a nonarchimedean local field and $\bG$ is connected reductive $F$-group.  By definition, a supercuspidal representation of $G$ is ``tame''  if it is one of the representations constructed by Jiu-Kang Yu in \cite{Y}.  The basic objects used to  parametrize tame supercuspidal representations are called ``cuspidal $G$-data.'' 
(The latter notion was introduced in \cite{Y} though the terminology is from \cite{HM}.)
Assume now we have fixed a cuspidal $G$-datum $\Psi  = (\vec\bG, y,\rho,\vec\phi)$ and let $\pi (\Psi)$ denote the associated representation.

Assume we have also fixed an involution $\theta$ of $G$, that is, an $F$-automorphism of $\bG$ of order two.
The central problem considered in \cite{HM} is the computation of the dimension of $\Hom_{G^\theta} (\pi(\Psi),1)$, where $G^\theta$ is the group of fixed points of $\theta$ in $G$.  Since this dimension is constant as $\theta$ varies over its $G$-orbit $\Theta$, we write
$$\langle \Theta ,\Psi \rangle_G = \dim \Hom_{G^\theta}(\pi(\Psi),1).$$  (Recall that $G$ acts on involutions by $g\cdot \theta = \Int (g)\circ \theta \circ \Int(g)^{-1}$.)

Let $[\Psi]$ denote the set of refactorizations of $\Psi$ (in the sense of \cite{HM}) and let $[\theta]$ denote the $K^0$-orbit of $\theta$.
Note that $\vec\bG$, $y$, and the equivalence class of $\pi(\Psi)$ do not vary in the refactorization class $[\Psi]$.  On the other hand, the representation $\rho$ of $K^0$ does vary, however, its twist
$$\rho' = \rho \otimes \left( \phi |K^0\right)$$ is an invariant of $[\Psi]$.   Recall that $\phi$ is the quasicharacter of $G^0$ defined by
$$\phi = \prod_{i=0}^d \phi_i | G^0$$
and note that $\rho' |K^0_+$ is a multiple of the character $\phi | K^0_+$.

Write $[\theta] \sim [\Psi]$ when 
$\theta (K^0) = K^0$ and $\phi | K^{0,\theta}_+ =1$.
Equivalently, $[\theta] \sim [\Psi]$ when
there exists $\dot\Psi\in [\Psi]$ such that $\dot\Psi$ is $\theta$-symmetric in the sense of~\cite{HM}.
In the latter case, $\dot\Psi$ will be $\theta'$-symmetric for all $\theta'\in [\theta]$.

If $\theta (K^0 )= K^0$, we define
$$\langle [\theta],[\Psi]\rangle_{K^0} = \dim {\rm Hom}_{K^{0,\theta}}(\rho',\eta'_\theta),$$
where $\eta'_\theta$ is a certain quadratic character defined in~\cite{HM}
Otherwise, we take $\langle [\theta],[\Psi]\rangle_{K^0} = 0$.
Note that if $\langle [\theta],[\Psi]\rangle_{K^0}$ is nonzero then  $[\theta]\sim [\Psi]$.

We now state some refinements to the main theorem of \cite{HM} (Theorem 5.26) that hold under the same technical assumptions.
First of all, we prove in statement 1 of Theorem~\ref{thm:K^0_formula} that
$$\langle \Theta ,\Psi\rangle_G = \sum_{[\theta]} m_{K^0}([\theta])\ \langle [\theta],[\Psi]\rangle_{K^0},$$ where $$m_{K^0} ([\theta]) = [G_\theta : (K^0\cap G_\theta)G^\theta]$$ and we are summing over the $K^0$-orbits $[\theta]$ in $\Theta$ such that $[\theta]\sim [\Psi]$.  The fact that we are summing over $K^0$-orbits of involutions, rather than $K$-orbits, is a significant improvement over~\cite{HM} since $K^0$ has a much simpler structure than $K$.
In addition, the explicit formula defining $m_{K^0}([\theta])$ should be easy to evaluate in applications and it corrects 
a mistake in \cite{HM}.

Next, we state a formula which simplifies Theorem 5.26 (5) \cite{HM}.  Assume there exists $\theta\in \Theta$ such that $[\theta] \sim [\Psi]$ and fix such a $\theta$.
  (Such a $\theta$ always exists  if $\langle \Theta , \Psi\rangle_G$ is nonzero.)  Let $g_1,\dots, g_m\in G$ be a maximal (necessarily finite) sequence such that $g_i\theta (g_i)^{-1}\in K^0$ and the $K^0$-orbits $[g_i\cdot \theta]$ are distinct.  Then we show in statement 2 of Theorem~\ref{thm:K^0_formula} that
$$\langle \Theta ,\Psi\rangle_G = \sum_{i=1}^m m_{K^0}([g_i\cdot \theta])\ \langle [g_i\cdot \theta],[\Psi]\rangle_{K^0}.$$

We also reformulate certain results of Lusztig \cite{L} for finite groups of Lie type to make evident that our formulas for $\langle \Theta ,\Psi\rangle_G$ have close analogues for representations of finite groups of Lie type.

\subsection{A special class of examples}

Let $G = \bG (F)$, where $\bG = \GL_n$ for some odd integer $n\ge 3$ and $F$ is a nonarchimedean local field of characteristic $0$ whose residue field has characteristic $p$ with $p\ne 2$.  When $\nu\in G$ is symmetric, we define an automorphism of $\bG$ by
$$\theta_\nu (g) = \nu^{-1}\cdot {}^t g^{-1}\cdot \nu .$$
Such automorphisms will be called ``orthogonal involutions of $G$.''  In general, we use boldface letters for $F$-groups and the corresponding non-bold letters for the corresponding subgroups of $F$-rational points.  If $\theta$ is an orthogonal involution, let $\bG^\theta$ be the group of fixed points of $\theta$ (and thus $G^\theta$ denotes $\bG^\theta (F)$).

In this paper, we  compute the spaces $\Hom_{G^{\theta}}(\pi ,1)$ when $\pi$ is an irreducible tame supercuspidal representation of $G$ and $\theta$ is an orthogonal involution of $G$.  We follow the approach of \cite{HM}.  When $\Hom_{G^{\theta}}(\pi ,1)$ is nonzero, one says that $\pi$ is {\it $G^\theta$-distinguished}.  (The property of being $G^\theta$-distinguished is referred to as {\it $G^\theta$-distinction}.)

Our main theorem, Theorem 
\ref{maintheorem}, states that if
$\pi$ has central character $\omega$ then $\pi$  is $G^\theta$-distinguished precisely when $\omega(-1)=1$ and $\theta$ has the form
$\theta_\nu$ for some symmetric 
 matrix $\nu\in G$ that is similar to the matrix
 $$J  =\begin{pmatrix}
& &1\\
&\raisebox{-.1ex}{.} \cdot \raisebox{1.2ex}{.}&\\
1&&
\end{pmatrix}.$$  (Note that $\theta_{\nu_1}$ and $\theta_{\nu_2}$
are in the same $G$-orbit if
and only if $\nu_1$ is similar to a scalar multiple of $\nu_2$.)
We also show that when $\pi$ is $G^\theta$-distinguished, the dimension of $\Hom_{G^\theta} (\pi ,1)$ is one. 

Theorem \ref{maintheorem} complements work of Cesar Valverde \cite{V} whose results characterize distinction for a class of non-supercuspidal representations in the same setting we consider.  On the other hand, for depth-zero tame supercuspidal representations, the content of Theorem \ref{maintheorem} constitutes the main result in \cite{HzM}.

Our work may be used to characterize the set of tame supercuspidal representations in the image of the local metaplectic correspondence of \cite{FK} on the double cover $\widetilde{G}$ of $G$ in terms of distinguished representations.   In particular, if $\pi$ is an irreducible tame supercuspidal representation of $G$, then $\pi$ lies in the image of the Flicker-Kazhdan local metaplectic correspondence (from $\widetilde{G}$ to $G$)  if and only if $\pi$ is distinguished by any (hence every) split orthogonal group $G^\theta$.

The local results we obtain are consistent with a global conjecture of
Jacquet \cite{Ja} which suggests that, globally, a cuspidal automorphic
representation of $GL_n$ should be in the image of the metaplectic
correspondence precisely when it is distinguished, in a certain sense,
with respect to a split orthogonal similitude group.  For a precise
global statement, the reader should refer to \cite{Ja} or \cite{M}.

Let us describe the local analogue of the latter global notion of distinction.  Let $G^\theta$ be a split orthogonal group and let $G_\theta$ be the associated similitude group.  When $n$ is odd, the similitude map $\mu_\theta (g)= g\theta (g)^{-1}$ from $G_\theta$ to the center $Z$ of $G$ is surjective and we have $G_\theta = G^\theta Z$.  Let $\chi$ be a quasicharacter of $Z$.  If $\pi$ is an irreducible tame supercuspidal representation of $G$ then we say $\pi$ is {\it $(G_\theta,\chi)$-distinguished} if $\Hom_{G^\theta}(\pi,\chi\circ \mu_\theta)$ is nonzero.
Clearly, if $\pi$ is $(G_\theta ,\chi)$-distinguished then it is $G^\theta$-distinguished.  Conversely, if $\pi$ is $G^\theta$-distinguished then $\pi$ is $(G_\theta,\chi)$-distinguished precisely when the central character of $\pi$ is $\chi^2$.  (This follows immediately from the fact that $G_\theta = G^\theta Z$.)

Jacquet's conjecture is tied to a potential formulation in terms of relative trace formula of Waldspurger's work \cite{W1} \cite{W2} on  the nonvanishing at the center of symmetry of the $L$-functions attached to a quadratic twist of a  cuspidal automorphic representations of $GL_2$.
Work on Jacquet's conjecture is ongoing with contributions by various authors.  (See \cite{oO}, for example.)

We would like to also mention recent work \cite{M2} by Fiona Murnaghan that links the existence of distinguished tame supercuspidal representations for a given pair $(G,\theta)$ to the existence of certain elements in $G$ that are elliptic regular with respect to $\theta$ in a suitable sense.  The specific examples we consider are also mentioned in \cite{M2}.\\

\emph{Acknowledgements}
The authors would like to express their gratitude to Jeffrey Adler whose useful advice helped them resolve some key technical problems.

\section{General Notation and Background}
\label{sec:background}
Let $K$ be any nonarchimedean local field of characteristic $0$.  Denote by $\gO_K$ the ring
of integers of $K$ and by $\gP_K$ the maximal ideal of $\gO_K$.  Let $\f_K$ denote the residue field of $K$.  If $L/K$ is a finite extension, we let $N_{L/K}$ denote the norm map from $L^\times$ to $K^\times$.  Let $\bG$ be any connected reductive algebraic group defined over $K$.

For any subgroup $\bH$ of $\bG$, let $N_\bG(\bH)$ (resp.~$Z_\bG(\bH)$) denote the normalizer (resp.~centralizer) of $\bH$ in $\bG$.
Similarly, if $C$ is any subgroup of $\bG(K)$, we denote by $N_C(\bH)$ (resp.~$Z_K(\bH)$) the normalizer (resp.~centralizer) of $\bH$ in $C$.

Let $L$ be a finite extension of $K$.  Fix an algebraic closure $\overline K$ of $K$ containing $L$.
let $\Sigma$ denote the set of
$K$-embeddings of $L$ into $\bar K$,
and let $\iota: L\rightarrow \bar K$ denote the natural inclusion.

Let $\bH$ be an algebraic $L$-group.  Let $R_{L/K}\bH$ denote the 
$K$-group obtained from $\bH$ via the restriction of scalars functor from $L$ to $K$.  
Then there is a $\overline K$-isomorphism
$$
R_{L/K}\bH  \prod_{\sigma\in\Sigma}\bH_\sigma ,
$$
where $\bH_\sigma = \sigma(\bH)$.
The action of $\gal (\overline K/K)$ on $R_{L/K}\bH$ corresponds to the
following action on $\prod_{\sigma} \bH_\sigma$.  If $x\in
\prod_{\sigma}\bH_\sigma$, denote the $\sigma$-component of $x$ by
$x_\sigma$ for $\sigma\in\Sigma$.  Then, for $\tau\in\gal(\overline K/K)$, define $\tau\cdot x$ to be the
element of $\prod_{\sigma}\bH_\sigma$ with $\sigma$-component
$$
(\tau\cdot x)_\sigma = \tau x_{\tau^{-1}\sigma}\ \ \ (\sigma\in\Sigma).
$$
(Note that there is a natural action of
$\gal (\overline K/K)$ on $\Sigma$ so the notation $\tau^{-1}\sigma$ is
meaningful.)
We will identify $R_{L/K}\bH$ and $\prod_{\sigma\in\Sigma} \bH_\sigma$ (together with
the above action of $\gal (\overline K/K)$).  We will view each $\bH_\sigma$
as a $K$-subgroup of $G$.  Note that projection onto the
$\iota$-component gives an isomorphism of $(R_{L/K}\bH)(F)$ with $\bH(K)$.

Let $\theta$ be a $K$-involution of $\bG$.  Abusing notation slightly, we will often refer to $\theta$ as an involution of $\bG(K)$.  The group $\bG^\theta$ of $\theta$-fixed elements in $\bG$ is a reductive $K$-group.  For $g\in \bG(K)$, let $g\cdot\theta$ be the $K$-involution $\Int(g)\circ\theta\circ\Int(g)$ of $\bG$, where $\Int(g)$ is the automorphism $x\mapsto gxg^{-1}$ of $\bG$.  This defines on action of $G$ on the space of $K$-involutions of $\bG$.

We will make use of much of the above notation in the setting where the fields involved are finite.

For the remainder of the paper, $F$ will denote a fixed nonarchimedean local field of characteristic $0$.  We will abbreviate $\gO_F$, $\gP_F$, and $\f_F$ respectively by $\gO$, $\gP$, and $\f$.  Let $q$ be the cardinality of $\f$.  Let $\bG$ be any reductive $F$-group.  We will denote the group $\bG(F)$ of $F$-points of $\bG$ by $G$.  In general, we will use boldface letters to denote algebraic groups and corresponding ordinary letters to denote groups of $F$-rational points (for algebraic groups defined over $F$).  We denote the Lie algebra of $\bG$ by $\bfr g$, and set $\g = \bfr{g}(F)$.

Let $\theta$ be an involution of $G$.  Let  $\bG_\theta$ be the stabilizer of $\theta$ in $\bG$.  Then $\bG_\theta$ is a reductive $F$-group containing $\bG^\theta$.
If $\bZ$ denotes the center of $\bG$, we have 
\begin{eqnarray*}
G_\theta&=&\{ g\in G:\ g\cdot \theta = \theta\}\\
&=&\{ g\in G:\ g\theta(g)^{-1}\in Z\}.
\end{eqnarray*}
Then $g\mapsto g\theta (g)^{-1}$ gives a group homomorphism
$\mu : G_\theta \to Z$.  

When $G = \GL_n(F)$ and $G^\theta$ is an orthogonal group in $n$ variables the group $G_\theta$ is the associated orthogonal similitude group and $\mu$ is the similitude ratio.  So it is natural, in general, to view $G_\theta$ as a generalized similitude group with similitude ratio $\mu$.

The homomorphism $\mu$ yields an isomorphism of abelian groups
$$G_\theta /G^\theta \cong \mu (G_\theta),$$
as well as another such isomorphism
$$G_\theta /ZG^\theta \cong \mu (G_\theta)/\mu (Z).$$

Again, we note that much of this notation will also be used when $F$ is replaced by a fixed finite field (as in~\S\ref{sub:finite}).

Let $K$ be a finite extension of $F$.  Denote by $\cB (\bG,K)$ the Bruhat-Tits building of $\bG$ over $K$.  
For any maximal $K$-split torus $\bT$ of $\bG$, let $A(\bG,\bT,K)$ denote the apartment in $\cB (\bG,K)$ associated to $\bT$.
If $y\in\cB(\bG,K)$, let $[y]$ denote the image of $y$ in the reduced building $\cB_{\rm red}(\bG,K)$.  For any $y\in\cB(\bG,K)$, let $\bG(K)_{y,0}$ denote the associated parahoric subgroup of $\bG(K)$.  For a real number $r\geq 0$, denote by $\bG(K)_{y,r}$ the filtration subgroup of $\bG(K)_{y,0}$ attached to $y$ and $r$ by Moy and Prasad (see~\cite{MP}).  (These subgroups are defined with respect to the valuation on $K$ that restricts to the valuation on $F$ mapping $F$ onto $\Z$.)  If $K=F$, we will abbreviate $\bG(K)_{y,r}$ by $G_{y,r}$.  Let $\bG(K)_{y,r^+} = \bigcup_{s>r} \bG(K)_{y,s}$, and let $\bG(K)_{y,r:r^+} = \bG(K)_{y,r}/\bG(K)_{y,r^+}$.  The quotient $\bG(K)_{y,0:0^+}$ is the group of $\f_K$-rational points of a connected reductive $\f_K$-group which we will denote by $\mathsf G^K_y$, i.e., $\mathsf G^K_y(\f_K) = \bG(K)_{y,0:0^+} $.  When $K=F$, we will omit the superscript in this notation, i.e., $\mathsf G_y(\f) = G_{y,0:0^+}$.  The lattices $\bfr g(K)_{y,r}$, $\bfr g(K)_{y,r^+}$, and $\bfr g(K)_{y,r:r^+}$ (for $r\in\R$) are defined analogously.  When $K=F$, we will abbreviate these lattices respectively by $\g_{y,r}$, $\g_{y,r^+}$, and $\g_{y,r:r^+}$.

The following definition from \cite{HM} is derived from \cite{Y}:
\begin{definition}\label{cuspidalGdatum}
A $5$-tuple $(\vec\bG, y, \vec r,\rho,\vec\phi)$ is called a \emph{cuspidal $G$-datum} if it satisfies the following conditions:

\begin{itemize}
\item[\textbf{D1.}] $\vec{\bG}$ is a tamely ramified twisted Levi sequence $\vec{\bG}
= (\bG^0, \ldots, \bG^d)$ in $\bG$  and $\bZ^0/\bZ$
is $F$-anisotropic, where $\bZ^0$ and $\bZ$ are the centers of 
$\bG^0$ and $\bG=\bG^d$, respectively.

\item[\textbf{D2.}] $y$ is a point in $A(\bG,\bT,F)$, where $\bT$ is
a tame maximal $F$-torus of $\bG^0$ and $E'$ is a Galois tamely
ramified extension of $F$ over which $\bT$ (hence $\vec\bG$)
splits.
(Here, $A(\bG,\bT,F)=A(\bG,\bT,E')\cap \cB(\bG,F)$, where
$A(\bG,\bT,E')$ denotes the apartment in $\cB(\bG,E')$ corresponding to $\bT$.)

\item[\textbf{D3.}] $\vec{r} = (r_0, \ldots, r_d)$ is a sequence
of real numbers satisfying
$0 < r_0 < r_1 < \ldots < r_{d-1} \leq r_d$, if $d > 0$, and $0 \leq r_0$
if $d = 0$.

\item[\textbf{D4.}] $\rho$ is an irreducible representation of the stabilizer
$K^0 = G^0_{[y]}$ of $[y]$ in $G^0$ such that
$\rho\,|\, {G^0_{y,0^+}}$ is $1$-isotypic and
the compactly induced representation $\pi_{-1} = \ind_{K^0}^{G^0} \rho$
is irreducible (hence supercuspidal). Here, $[y]$ denotes the
image of $y$ in the reduced building of $G$.

\item[\textbf{D5.}] $\vec\phi = (\phi_0,\dots ,\phi_d)$ is a 
sequence of quasicharacters, where $\phi_i$ is a quasicharacter of $G^i$.  
We assume that $\phi_d =1$ if $r_d=r_{d-1}$ (with $r_{-1}$ defined to be 0),
 and in all other cases if $i\in \{\, 0,\dots,d\,\}$ 
then $\phi_i$ is trivial on $G^i_{y,r_i^+}$ 
but nontrivial on $G^i_{y,r_i}$.
\end{itemize}
\end{definition}
As observed in~\cite{HM}, the vector $\vec r$ is completely determined by $\vec\psi$.  Consequently, we can and will omit $\vec r$ and refer to the resulting $4$-tuple $(\vec\bG, y, \rho,\vec\phi)$ as a {\it cuspidal $G$-datum}.

A cuspidal $G$-datum $\Psi= (\vec\bG, y, \rho,\vec\phi)$ determines an open compact-mod-center subgroup $K = K(\Psi)$ of $G$.  As described in~\S3.1 of~\cite{HM}, $K$ can be expressed as a product $K^0J^1\cdots J^d$, where $K^0$ is as defined above, and for $i = 1,\ldots d$, $J^i$ is a certain open compact-mod-center pro-$p$ subgroup of $G^i$.  The datum $\Psi$ also determines a representation $\kappa = \kappa (\Psi)$ of $K$.

If the cuspidal $G$-datum $\Psi = (\vec\bG,y,\rho,\vec\phi)$ satisfies certain genericity conditions (namely, those in Definition~3.11 in~\cite{HM}), then the compactly induced representation $\ind_K^G\kappa$ is irreducible and hence supercuspidal.  Such data are called {\it generic} in~\cite{HM}.  For the sake of brevity, in this paper, {\it all cuspidal $G$-data will be assumed to be generic.}

Suppose $\Psi$ is a cuspidal $G$-datum.  
Let $\xi$ be the $K$-equivalence class of $\Psi$, as defined in \cite{HM}.  This consists of cuspidal $G$-data that are related to $\Psi$ by some combination of $K$-conjugation, refactorization, and ``elementary transformation,'' that is, replacing $y$ and $\rho$ by $\dot y$ and $\dot\rho$, where $[\dot y] = [y]$ and $\dot\rho\cong\rho$.  

Let $\theta$ be an involution of $G$.  
Following \cite{HM}, we say that $\Psi$ is {\it  $\theta$-symmetric} if:
\begin{itemize}
\item $\theta (\vec\bG) = \vec\bG$,
\item $\vec\phi \circ\theta = \vec\phi^{-1}$,
\item $\theta ([y]) = [y]$, where $[y]$ is the point in the reduced building of $G$ corresponding to $y$.
\end{itemize}
When the first two conditions are satisfied, but not necessarily the third condition, we say that $\Psi$ is {\it weakly $\theta$-symmetric}.

If $\Theta$ is the $G$-orbit of $\theta$, define
$$\langle\Theta,\Psi\rangle = \dim\Hom_{G^\theta}(\pi(\Psi),1).$$
Of course, this dimension is independent of the particular choice of representative $\theta$ of $\Theta$.
Also, it depends only on the $K$-equivalence class $\xi$, and so we also denote it by $\langle\Theta,\xi\rangle$.
Let $\Theta'$ be a $K$-orbit of involutions of $G$.
Then $\langle \Theta',\xi\rangle_K$ is defined in \cite{HM} by
$$\langle \Theta',\xi\rangle_K = \dim \Hom_{K^\theta} (\kappa (\Psi),1),$$ where $\theta$ is an arbitrary element of $\Theta'$.
When $\langle \Theta' ,\xi\rangle_K$ is nonzero, we say that $\Theta'$ and $\xi$ are \textit{strongly compatible.}  By Propositions 5.7 and 5.20 in \cite{HM}, if $\Theta'$ and $\xi$ are strongly compatible, then, according to Proposition 5.20 of~\cite{HM}, they are also \textit{moderately compatible}, which is equivalent to the statement that we can choose a refactorization $\dot\Psi$ of $\Psi$ and $\theta\in \Theta'$ such that $\dot\Psi$ is $\theta$-symmetric.

\section{Distinguished representations: general theory}

\subsection{A refined multiplicity formula}
\label{sec:correction} 

In this  section, we follow the notations of \cite{HM}.  Our first goal is to correct an error in \cite{HM}  (which we thank Shaun Stevens for reporting to us).    In particular, the constants $m_K(\Theta)$ do not appear to be well-defined and should be replaced by a family of constants $m_{K}(\Theta')$, as $\Theta'$ varies over the set $\Theta^K$ of $K$-orbits in $\Theta$.  This error does not affect the theory of \cite{HM} in a substantial way but it does affect some of the statements of the main results.  In particular, the formula
$$
\langle \Theta ,\xi\rangle_G = m_K(\Theta )\sum_{\Theta'\in \Theta^K} \langle \Theta' ,\xi\rangle_K
$$
which occurs throughout \cite{HM} should be replaced by
\begin{equation}\label{fixedeqn}
\langle \Theta ,\xi\rangle_G = \sum_{\Theta'\in \Theta^K} m_{K}(\Theta')\langle \Theta' ,\xi\rangle_K.
\end{equation}

A secondary purpose is to obtain formulas for the quantities in (\ref{fixedeqn}) which involve only $K^0$ and not the much more complicated group $K$.  This should greatly simplify the computations in examples.  In particular, we show that each $K$-orbit $\Theta'\subset\Theta$ contains a unique $K^0$-orbit that contains an involution $\theta$ such that $m_K(\Theta') = [G_\theta:(K^0\cap G_\theta) G^\theta]$.  The same is then true for any element of the $K^0$-orbit $[\theta]$ of $\theta$, so we denote this index by $m_{K^0}([\theta])$.  It is also shown that $m_{K^0}([\theta])$ is a power of two in general.

In addition, if $\Theta'$ contributes nontrivially to (\ref{fixedeqn}), it is shown in~\cite{HM} that
$$\langle \Theta' ,\xi\rangle_K = \dim {\rm Hom}_{K^{0,\theta}}(\rho'(\Psi),\eta'_\theta(\Psi)).$$
This formula also holds with $\theta$ replaced by any element of $[\theta]$ and $\Psi$ replaced by any datum in the class $[\Psi]$ of refactorizations of $\Psi$, and so we denote this dimension by $\langle [\theta],[\Psi]\rangle_{K^0}$.  The upshot of these results is that in \S\ref{subsub:simplified}, we show that it is possible to re-express (\ref{fixedeqn}) in the form
$$
\langle \Theta ,\Psi\rangle_G = \sum_{[\theta]} m_{K^0}([\theta])\ \langle [\theta] ,[\Psi]\rangle_{K^0},
$$
where the summation is over a certain collection of $K^0$-orbits $[\theta]\in\Theta$ depending on $[\Psi]$.

\subsubsection{The constants $m_K(\Theta')$}

From now on, we assume that we have fixed  a $G$-orbit $\Theta$ of involutions of $G$ and an inducing subgroup $K$, as in \cite{HM}.  Then $\Theta$ is a union of $K$-orbits $\Theta'$ and the set of such $K$-orbits is denoted $\Theta^K$.  
The rule $g\mapsto g\theta (g)^{-1}$ yields a bijection between $G/G^\theta$ and the space $\cS_\theta$ of elements $g\theta (g)^{-1}$ as $g$ varies over $G$.
Recall that the action of $G$ on the set of involutions of $G$ is given by $$(g\cdot \alpha)(g') = g\alpha (g^{-1} g'g)g^{-1}.$$
We have a diagram
$$\xymatrix{
G/G^\theta\ar@{<->}[rr]\ar@{>>}[dr]&
&\cS_\theta\ar@{>>}[dl]\\
&\Theta}$$ where the maps are given by:
$$\xymatrix{
gG^\theta\ar@{<->}[rr]\ar@{|->}[dr]&
&g\theta(g)^{-1}\ar@{|->}[dl]\\
&g\cdot \theta}.$$

Let ${\mathcal F}_g$ be the fiber of $g\cdot \theta\in \Theta$ in $G/G^\theta$. Then ${\mathcal F}_g = g{\mathcal F}_1$ and so there is a canonical bijection between any fiber ${\mathcal F}_g$ and the fiber ${\mathcal F}_1$.

The group $K$ acts on $G/G^\theta$ by left translations; it acts on $\cS_\theta$ by $k\cdot x = kx\theta (k)^{-1}$; and it acts on $\Theta$ by restricting the action of $G$ on the set of involutions.

The above maps are $K$-equivariant and we obtain corresponding maps on the sets of $K$-orbits:
$$\xymatrix{
K\bs G/G^\theta \ar@{<->}[rr]\ar@{>>}[dr]&
&\cS_{\theta}^K\ar@{>>}[dl]\\
&\Theta^K}\quad \xymatrix{
KgG^\theta \ar@{<->}[rr]\ar@{|->}[dr]&
&K\cdot g\theta(g)^{-1}\ar@{|->}[dl]\\
&Kg\cdot \theta}.$$

Let ${\mathcal F}^K_g$ be the fiber in $K\bs G/G^\theta \in \Theta^K$ of $\Theta'= Kg\cdot \theta$.
Let $m_K(\Theta')$ be the cardinality of ${\mathcal F}^K_g$.  It is easy to see that $m_K (\Theta')$ is finite.  (This will follow from explicit expressions for $m_K(\Theta')$ given below.)
We have
$${\mathcal F}^K_g = g {\mathcal F}_1^{g^{-1}Kg}$$ and thus
\begin{equation}
m_K (K g\cdot \theta) = m_{g^{-1} Kg} (g^{-1}Kg\cdot \theta).
\end{equation}

Lemma 2.6 \cite{HM} asserts that the numbers $m_K(\Theta')$ remain constant as $\Theta'$ varies in $\Theta^K$, but the proof appears to be erroneous.  Building on the error, the constant $m_K(\Theta)$  is defined to be the common value of the $m_K(\Theta')$'s.

To correct this mistake, we need to use the formula
$$\langle \Theta ,\xi\rangle_G = \sum_{\Theta'\in \Theta^K} m_{K}(\Theta')\langle \Theta' ,\xi\rangle_K.$$
We remark that in most common applications at most one of the summands $\langle \Theta',\xi\rangle_K$ is nonzero.  It is also common that the constants $m_K (\Theta)$ are all 1  since $G_\theta =ZG^\theta$ for some $\theta\in \Theta$.  So the error just mentioned is not easily detected by studying examples.

\subsubsection{Elementary abelian 2-groups}

Let $\Theta'$ be the $K$-orbit of $\theta$.  To establish that $m_K(\Theta')$ is a power of two, we will show that it divides the order of the group $G_\theta/ZG^\theta$, which turns out to be an elementary finite abelian 2-group.

Let
\begin{eqnarray*}Z^1_\Theta&=&\{ z\in Z :\ \theta (z) = z^{-1}\},\\
B^1_\Theta&=&\{ z\theta (z)^{-1}  :\ z\in Z\} =\mu (Z),\\
H^1_\Theta&=&Z^1_\Theta / B^1_\Theta .
\end{eqnarray*}

Fix $\theta\in \Theta$ and let ${\mathcal G}_\theta = G_\theta /ZG^\theta$.  Let ${\mathcal K}_\theta$ denote the image of $K\cap G_\theta$ in ${\mathcal G}_\theta$.  The homomorphism $\mu$ yields an isomorphism of ${\mathcal G}_\theta$ with the subgroup $\mu (G_\theta)/B^1_\Theta$ of $H^1_\Theta$. But the group $H^1_\Theta$ is an elementary finite abelian 2-group.  (See the proof of Lemma 2.8~\cite{HM}.)  It follows that ${\mathcal G}_\theta$ is an elementary finite abelian 2-group.

Now let $\Theta'$ be the $K$-orbit of $\theta$.  We have:

\begin{lemma}\label{GmodK}  The constant
$m_K(\Theta')$ is identical to the order of the elementary finite abelian 2-group ${\mathcal G}_\theta /{\mathcal K}_\theta$ and thus it is a power of 2.
\end{lemma}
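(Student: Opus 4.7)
The plan is to compute ${\mathcal F}^K_g$ explicitly as a double coset space and then collapse it to a quotient of ${\mathcal G}_\theta$. By the translation identity $m_K(Kg\cdot\theta) = m_{g^{-1}Kg}(g^{-1}Kg\cdot\theta)$ already recorded in~(2.1), and since conjugating $(K,\theta)$ by $g^{-1}$ transports $(\mathcal G_\theta,\mathcal K_\theta)$ equivariantly, it suffices to treat the case $g=1$, i.e. $\Theta' = K\cdot\theta$. Unwinding the definitions, ${\mathcal F}^K_1$ consists of the double cosets $KhG^\theta$ with $Kh\cdot\theta = K\cdot\theta$, equivalently $h\in KG_\theta$, so ${\mathcal F}^K_1$ is in bijection with $K\backslash KG_\theta/G^\theta$.

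First I would identify $K\backslash KG_\theta/G^\theta$ with $(K\cap G_\theta)\backslash G_\theta/G^\theta$. Every double coset in $KG_\theta$ admits a representative in $G_\theta$, and since $G^\theta\subseteq G_\theta$, two such representatives $h_1,h_2\in G_\theta$ related by $h_2 = kh_1 g$ with $k\in K$, $g\in G^\theta$ force $k = h_2 g^{-1} h_1^{-1}\in K\cap G_\theta$. Since $G^\theta$ is normal in $G_\theta$ (for $g\in G_\theta$, the element $g^{-1}\theta(g) = (g\theta(g)^{-1})^{-1}\in Z$ is central and therefore commutes with $G^\theta$) and $G_\theta/G^\theta \cong \mu(G_\theta)\subseteq Z$ is abelian, this double coset space coincides with the quotient group $G_\theta/(K\cap G_\theta)G^\theta$. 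Hence $m_K(\Theta') = [G_\theta:(K\cap G_\theta)G^\theta]$.

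Finally I would invoke the containment $Z\subseteq K$, which is immediate from the standard decomposition $K = K^0 J^1\cdots J^d$ and the fact that $K^0 = G^0_{[y]}$ contains the center $Z^0$ of $\bG^0$, which in turn contains $Z$. Consequently $Z\subseteq K\cap G_\theta$, giving $(K\cap G_\theta)G^\theta = (K\cap G_\theta)ZG^\theta$, so that
$$
m_K(\Theta') \;=\; [G_\theta:(K\cap G_\theta)ZG^\theta] \;=\; [{\mathcal G}_\theta:{\mathcal K}_\theta].
$$
Since ${\mathcal G}_\theta$ has already been shown to be an elementary finite abelian $2$-group, so is its quotient ${\mathcal G}_\theta/{\mathcal K}_\theta$, and its order is a power of~$2$. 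I anticipate no serious obstacle beyond careful double-coset bookkeeping; the only structural input is the normality of $G^\theta$ in $G_\theta$ (to pass from double cosets to a quotient group) together with the transparent inclusion $Z\subseteq K$.
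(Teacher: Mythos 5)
Your proposal is correct and follows essentially the same route as the paper: identify the fiber over $\Theta'=K\cdot\theta$ with the double cosets admitting a representative in $G_\theta$, pass to $(K\cap G_\theta)\backslash G_\theta/G^\theta$, and then use the normality of $G^\theta$ in $G_\theta$ together with $Z\subseteq K\cap G_\theta$ to recognize this as $[{\mathcal G}_\theta:{\mathcal K}_\theta]$. The only differences are cosmetic: you spell out the reduction to $g=1$ and the inclusions $Z\subseteq K^0\subseteq K$, which the paper leaves implicit.
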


\begin{proof}
The constant $m_K(\Theta')$ represents the number of elements of $K\bs G/G^\theta$ that contain a representative $g$ such that $g\cdot \theta = \theta$.  But $g\cdot \theta = \theta$ occurs exactly when $g\theta (g)^{-1}\in Z$ or, equivalently, when $g\in G_\theta$.  Hence we are counting double cosets that have a representative in $G_\theta$.

Suppose we have $Kg_1G^\theta = Kg_2 G^\theta$, with $g_1,g_2\in G_\theta$.  Then $g_2 = kg_1h$, for some $k\in K$ and $h\in G^\theta$.  Then $k$ necessarily lies in $K\cap G_\theta$.  Thus we have $(K\cap G_\theta)g_1 G^\theta = (K\cap G_\theta)g_2 G^\theta$.  It follows that there is a bijection between the set of elements of $K\bs G/G^\theta$ with a representative in $G_\theta$ and the double coset space $(K\cap  G_\theta)\bs G_\theta/ G^\theta$.  Since $G^\theta$ is a normal subgroup of $G_\theta$ and $Z\subset K\cap G_\theta$, our claim follows.
\end{proof}

We remark that Lemma~2.8 \cite{HM} establishes that $m_K(\Theta')$ is finite by showing that  
$$m_K(\Theta' ) \le |H^1_\Theta| <\infty .$$
The proof does not establish that $m_K(\Theta')$ divides $|H^1_\Theta|$ or that $m_K(\Theta')$ is a power of two.

We close this section by emphasizing that the expression just given for $m_K(\Theta')$ involves the image of $K\cap G_\theta$ in ${\mathcal G}_\theta$.  In the next section, we show that one can replace $K$ by $K^0$.

\subsubsection{A formula for $m_K(\Theta')$}
In this section, we exploit the structure of the inducing group $K$ to obtain a more precise formula for $m_K(\Theta')$.

Having defined $G_\theta$, we can now state our desired formula for $m_K(\Theta')$.  
\begin{theorem}\label{mKTheta}Let $\Psi = (\vec\bG,y,\rho,\vec\phi)$ be a generic cuspidal $G$-datum. 
Let $\xi$ be the $K$-equivalence class of $\Psi$, and let $\Theta'$ be a $K$-orbit of $F$-involutions of $\bf G$ such that $\langle \Theta',\xi\rangle_K$ is nonzero.  Then for any $\theta\in\Theta'$ such that $\Psi$ is $\theta$-symmetric,
$$m_K(\Theta') = [G_\theta : (K^0\cap G_\theta)G^\theta].$$
\end{theorem}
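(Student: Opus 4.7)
The plan is to reduce the theorem to the inclusion $K\cap G_\theta \subseteq (K^0\cap G_\theta)G^\theta$ and then establish this via a pro-$p$ square-root argument. By Lemma~\ref{GmodK}, $m_K(\Theta')$ equals the order of ${\mathcal G}_\theta/{\mathcal K}_\theta$, which unpacks to $[G_\theta : (K\cap G_\theta)G^\theta]$ once one absorbs $Z$ into $K\cap G_\theta$ (using $Z\subseteq K^0\subseteq K$). Since $(K^0\cap G_\theta)G^\theta \subseteq (K\cap G_\theta)G^\theta$ is clear, it suffices to show that every $g\in K\cap G_\theta$ lies in $(K^0\cap G_\theta)G^\theta$.

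For the core step, I would use the factorization $K = K^0 J$ where $J := J^1\cdots J^d$ is a pro-$p$ subgroup. Since $\Psi$ is $\theta$-symmetric, $\theta$ preserves $K^0$ and each $J^i$, hence $J$. Given $g\in K\cap G_\theta$, write $g = k_0 j$ and set $z := g\theta(g)^{-1}\in Z$. Expanding $z = k_0 j\theta(j)^{-1}\theta(k_0)^{-1}$ and using the centrality of $z$ yields
$$v := j\theta(j)^{-1} = z\cdot k_0^{-1}\theta(k_0),$$
which lies in $J\cap K^0$ and satisfies $\theta(v)=v^{-1}$. The intersection $J\cap K^0$ is a closed $\theta$-stable subgroup of the pro-$p$ group $J$, hence itself pro-$p$; since $p$ is odd under the tameness assumptions, squaring is bijective there, and there is a unique $w\in J\cap K^0$ with $w^2 = v$. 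Applying $\theta$ to $w^2 = v$ and invoking uniqueness of square roots forces $\theta(w) = w^{-1}$.

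Setting $k_0' := k_0 w\in K^0$ and $j' := w^{-1}j$, direct computations give $\mu(k_0') = k_0 w^2\theta(k_0)^{-1} = k_0 v\theta(k_0)^{-1} = z\in Z$, so $k_0'\in K^0\cap G_\theta$; and $j'\theta(j')^{-1} = w^{-1} v w^{-1} = 1$, so $j'\in G^\theta$. The decomposition $g = k_0' j'$ then furnishes the required inclusion.

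The main obstacle is structural rather than algebraic: one must extract from Yu's construction and the $\theta$-symmetry of $\Psi$ that $K = K^0 J$ with $J$ a $\theta$-stable pro-$p$ subgroup, so that the square-root argument applies. Once these inputs are in place, the argument closes cleanly. The hypothesis $\langle\Theta',\xi\rangle_K\neq 0$ enters only to guarantee the existence of a $\theta\in\Theta'$ making $\Psi$ (possibly after a refactorization not affecting $K$, $K^0$, or $G_\theta$) actually $\theta$-symmetric, which is what triggers the $\theta$-stability of $J$ used throughout.
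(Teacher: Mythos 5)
Your proposal is correct, and it shares the paper's skeleton: reduce via Lemma~\ref{GmodK} to comparing $K\cap G_\theta$ with $K^0\cap G_\theta$ modulo $ZG^\theta$, then exploit the $\theta$-stable decomposition $K=K^0J^1\cdots J^d$ (Proposition 3.14 of \cite{HM}) and the oddness of $p$. Where you diverge is in the key step. The paper invokes Lemma~\ref{nicefactor}, $K\cap G_\theta=(K^0\cap G_\theta)J^{1,\theta}\cdots J^{d,\theta}$ (whose proof adapts Lemma 2.9 of \cite{HM} via Lemma~\ref{lemalpha}), and then notes that the pro-$p$ groups $J^{i,\theta}$ have trivial image in the elementary abelian $2$-group ${\mathcal G}_\theta$; you instead prove the inclusion $K\cap G_\theta\subseteq (K^0\cap G_\theta)G^\theta$ directly by an explicit factorization $g=k_0'j'$ using unique square roots in a pro-$p$ group. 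Your computation is in effect an inline proof of the needed consequence of Lemma~\ref{nicefactor} --- it is the same $2$-divisibility mechanism as Lemmas 2.9 and 2.11 and Proposition 2.12 of \cite{HM} --- and it buys a more self-contained argument that bypasses both the factorization lemma and the image-in-${\mathcal G}_\theta$ step, at the cost of redoing machinery the paper simply cites. One small repair: do not lean on $J:=J^1\cdots J^d$ being a pro-$p$ group or on $v,w$ lying in $J\cap K^0$; all you need is $v=j\theta(j)^{-1}\in K^0\cap G_{y,0^+}=K^0_+$, which is pro-$p$ and $\theta$-stable (since $\theta$ stabilizes $\bG^0$ and fixes $[y]$), and you may take the square root $w$ there --- this is exactly how the paper handles the analogous manipulation in Propositions~\ref{prop:K^0} and~\ref{prop:refactorization}. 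Finally, note that the paper's proof also treats $\theta\in\Theta'$ for which $\Psi$ is not itself $\theta$-symmetric (via Proposition 5.9 and Lemma 5.19 of \cite{HM} and a $K$-conjugation); since the statement quantifies only over $\theta$ making $\Psi$ $\theta$-symmetric, your reading of the hypothesis $\langle\Theta',\xi\rangle_K\neq 0$ is acceptable, but be aware the nonvanishing hypothesis is what makes the theorem usable when no such $\theta$ is given in advance.
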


We will show later that there exists a unique $K^0$-orbit $\Theta'_0\subset\Theta'$ such that $\Psi$ is symmetric with respect to some (hence every) involution in $\Theta'_0$.  This justifies the notation $m_{K^0}(\Theta'_0)$ for $m_K(\Theta')$ and leads to a reformulation

\bigskip 
Fix a generic cuspidal $G$-datum $\Psi = (\vec\bG, y,\rho,\vec\phi)$.  Assume first that  $\Psi$ is $\theta$-symmetric.
Let $K = K(\Psi)$ be the inducing group associated to $\Psi$.  Then $K$ has a decomposition
$$K = K^0 J^1\cdots J^d.$$  It is shown in Proposition 3.14 \cite{HM} that all of the factors in the latter decomposition are $\theta$-stable.  Moreover, we have
$$K^\theta = K^{0,\theta } J^{1,\theta}\cdots J^{d,\theta},$$
where $S^\theta$ denotes the set of fixed points of $\theta$ in $S$.
We need a slight generalization of the latter fact.

\begin{lemma}\label{nicefactor}   If $\Psi = (\vec\bG,y,\rho,\vec\phi)$ is a 
$\theta$-symmetric cuspidal $G$-datum then $$K\cap G_\theta = (K^0\cap G_\theta) J^{1,\theta}\cdots J^{d,\theta}.$$
\end{lemma}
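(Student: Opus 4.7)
The inclusion $(K^0\cap G_\theta)J^{1,\theta}\cdots J^{d,\theta}\subseteq K\cap G_\theta$ is immediate: each factor already lies in the subgroup $K\cap G_\theta$ of $K$, since $K^0\cap G_\theta\subseteq K\cap G_\theta$ and $J^{i,\theta}\subseteq J^i\cap G^\theta\subseteq K\cap G_\theta$. The substance of the lemma is therefore the reverse inclusion, and the plan is to reduce it to the identity $K^\theta=K^{0,\theta}J^{1,\theta}\cdots J^{d,\theta}$ of Proposition~3.14 of~\cite{HM} by showing that every $k\in K\cap G_\theta$ factors as $k=k_0'h$ with $k_0'\in K^0\cap G_\theta$ and $h\in K^\theta$.

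Given such a $k$, I set $z=k\theta(k)^{-1}\in Z$ and write $k=k_0k_+$ using the decomposition $K=K^0J^1\cdots J^d$, so that $k_0\in K^0$ and $k_+=j_1\cdots j_d$ lies in the $\theta$-stable pro-$p$ subgroup generated by the $J^i$. Put $w:=k_+\theta(k_+)^{-1}$. A direct computation gives $\theta(w)=w^{-1}$, and comparing $z=k\theta(k)^{-1}$ with $k=k_0k_+$ and using the centrality of $z$ yields the identity
$$w=z\,k_0^{-1}\theta(k_0),$$
which exhibits $w$ as an element of $K^0$. Consequently $w$ lies in the closed abelian pro-$p$ subgroup $P:=\overline{\langle w\rangle}$ of $K^0$, and $\theta$ acts on $P$ by inversion.

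Since $p\neq 2$ under the tameness hypothesis, squaring is a bijection on $P$, so the unique square root $a=w^{1/2}\in P\subseteq K^0$ exists. Because $\theta$ inverts on $P$, we have $\theta(a)=a^{-1}$ and therefore $a\theta(a)^{-1}=a^2=w$. Setting $k_0':=k_0a\in K^0$ and $h:=a^{-1}k_+$, one checks directly that $\theta(h)=\theta(a)^{-1}\theta(k_+)=a\theta(k_+)=a\cdot w^{-1}k_+=a^{-1}k_+=h$, so $h\in K^\theta$. Since $k,h\in G_\theta$, we also have $k_0'=kh^{-1}\in G_\theta$, hence $k_0'\in K^0\cap G_\theta$. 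Finally, decomposing $h=h_0h_1\cdots h_d$ with $h_0\in K^{0,\theta}$ and $h_i\in J^{i,\theta}$ via Proposition~3.14 of~\cite{HM}, and absorbing $h_0$ into $k_0'$ (both of which lie in $K^0\cap G_\theta$), yields $k\in(K^0\cap G_\theta)J^{1,\theta}\cdots J^{d,\theta}$.

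The most delicate step is identifying a $\theta$-stable abelian pro-$p$ subgroup containing $w$ on which $\theta$ acts by inversion, so that the $\theta$-equivariant square root needed to trade the central discrepancy $z$ for a fixed element $h\in K^\theta$ is available. This is exactly where the oddness of the residual characteristic enters, and it is the only place in the argument that goes beyond formal manipulation of the decomposition $K=K^0J^1\cdots J^d$ and the $\theta$-stability of its factors.
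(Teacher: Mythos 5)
Your proof is correct, but it is organized differently from the paper's. The paper does not invoke the statement of Proposition~3.14 of~\cite{HM}; it reruns that proposition's proof, replacing its group-theoretic ingredient (Lemma~2.9 of~\cite{HM}) by the variant recorded here as Lemma~\ref{lemalpha}, in which fixed points of $\alpha$ are replaced, on $C$ and on the factor $A$, by elements $x$ with $x\,\alpha(x)^{-1}\in Z$. You instead use Proposition~3.14 of~\cite{HM} as a black box and perform the square-root manipulation once, on the entire pro-$p$ part: writing $k=k_0k_+$, you transfer the central discrepancy onto the $K^0$-factor via $k=(k_0a)(a^{-1}k_+)$ with $a^2=w=k_+\theta(k_+)^{-1}$, so that $a^{-1}k_+\in K^\theta$ can then be factored by Proposition~3.14 and the $K^{0,\theta}$-piece absorbed into $K^0\cap G_\theta$. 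Both arguments turn on the same mechanism (unique square roots in a pro-$p$ group with $p$ odd, together with $\theta(w)=w^{-1}$), but your reduction avoids restating the modified induction, which is a genuine economy. One step of your write-up should be tightened: the pro-$p$-ness of $P=\overline{\langle w\rangle}$ does not follow from $w\in K^0$ alone (for instance $w=-1\in Z\subset K^0$ generates a group of order two containing no square root of $-1$); it follows because $k_+$ and $\theta(k_+)$ both lie in the pro-$p$ group $J^1\cdots J^d\subset G_{y,0^+}$, whence $w\in K^0\cap G_{y,0^+}=K^0_+$, which is pro-$p$. You already have these ingredients on the page, so the repair is one line, but as written the word ``Consequently'' attributes the key property to the wrong fact.
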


\bigskip The proof of the latter result is identical to that of Proposition 3.14 \cite{HM} except that, instead of Lemma 2.9 \cite{HM}, we substitute the following result whose proof is essentially the same as the proof of Lemma 2.9 \cite{HM}.

\begin{lemma}\label{lemalpha} Suppose $\alpha$ is an automorphism of a group $C$ such that $\alpha^2=1$.  Assume $A$, $B$ and $Z$ are $\alpha$-stable subgroups of $C$ such that $C=AB$ and $Z$ is a subgroup of $A$ that is contained in the center of $C$.  Let 
\begin{eqnarray*}
A'&=&\{ a\in A\ :\ a\, \alpha(a)^{-1}\in Z\},\\
B'&=&\{ b\in B\ :\  \alpha (b) = b\},\\
C'&=&\{ c\in C\ :\ c\, \alpha(c)^{-1}\in Z\}.
\end{eqnarray*}
Then $C' = A'B'$.
\end{lemma}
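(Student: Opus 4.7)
The strategy is to prove the two inclusions $A'B'\subseteq C'$ and $C'\subseteq A'B'$ separately, with the latter reducing to the argument of Lemma~2.9 of~\cite{HM} after a mild twist. The easy direction $A'B'\subseteq C'$ is a direct calculation: given $a\in A'$ with $a\alpha(a)^{-1}=z\in Z$ and $b\in B'$ with $\alpha(b)=b$, one obtains
$(ab)\alpha(ab)^{-1}=a\bigl(b\alpha(b)^{-1}\bigr)\alpha(a)^{-1}=a\alpha(a)^{-1}=z\in Z$,
so $ab\in C'$.

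For the reverse inclusion, I would start with $c\in C'$, set $z=c\alpha(c)^{-1}\in Z$, and note that $\alpha^{2}=1$ forces $\alpha(z)=z^{-1}$. Writing $c=ab$ with $a\in A$ and $b\in B$, and using the centrality of $z\in A$, the identity $c=z\alpha(c)$ rewrites as $ab=(\alpha(a)z)\alpha(b)$; comparing the two $(A,B)$-factorizations of $c$ shows that $u:=b\alpha(b)^{-1}=a^{-1}\alpha(a)z$ lies in $A\cap B$ and satisfies $\alpha(u)=u^{-1}$. This is precisely the situation handled in the proof of Lemma~2.9 of~\cite{HM}, where one produces $y\in A\cap B$ with $y\alpha(y)^{-1}=u$. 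Setting $a'=ay$ and $b'=y^{-1}b$, a short calculation using the centrality of $z$ gives $a'\alpha(a')^{-1}=\alpha(a)z\alpha(a)^{-1}=z\in Z$ and $\alpha(b')=b'$, while $a'b'=ab=c$, so $c\in A'B'$.

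The main point requiring verification is that the central twist $z$ does not disrupt the internal mechanism of Lemma~2.9 of~\cite{HM} that produces $y$ from $u$. The hypothesis that $Z$ lies in $A$ and is central in $C$ is exactly designed for this: $z$ can be commuted past any element of $C$ and absorbed into $A$ wherever convenient, so every step of the original argument carries through with $z$ acting as an inert decoration. This is why the authors can legitimately assert that the proof of the generalization is ``essentially the same'' as that of Lemma~2.9.
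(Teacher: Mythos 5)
Your argument is, in substance, the one the authors intend: the paper gives no independent proof of this lemma, saying only that the proof is ``essentially the same'' as that of Lemma~2.9 of \cite{HM}, and your reduction --- factor $c=ab$, extract $u=b\,\alpha(b)^{-1}=a^{-1}\alpha(a)z\in A\cap B$ with $\alpha(u)=u^{-1}$, then correct the factorization by a suitable $y\in A\cap B$ --- together with the verification that the central element $z$ passes harmlessly through each step, is exactly that adaptation, and every computation you display is correct.

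The one point you should treat with more care is the step you defer to \cite{HM}: producing $y\in A\cap B$ with $y\,\alpha(y)^{-1}=u$ is not a consequence of the hypotheses displayed in the lemma; it is the entire content being imported. With only the stated hypotheses the conclusion can fail: take $C=\langle a\rangle\times\langle t\rangle\cong \mathbb{Z}/4\times\mathbb{Z}/2$, $\alpha(x)=x^{-1}$, $A=\langle a\rangle$, $B=\langle at\rangle$, $Z=\{1\}$. Then $C=AB$ and all listed hypotheses hold, yet $t\in C'$ while $A'B'=\{1,a^{2}\}$; the obstruction is precisely that $u=a^{2}\in A\cap B$ satisfies $\alpha(u)=u^{-1}$ but is not of the form $y\,\alpha(y)^{-1}$ with $y\in A\cap B$. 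So the existence of $y$ rests on the additional divisibility-type input carried by Lemma~2.9 of \cite{HM} and available in the intended application, where the relevant subgroups lie in pro-$p$ groups with $p$ odd, so that twisted cocycles of this kind are coboundaries (compare the uses of Lemma~2.11 and Proposition~2.12 of \cite{HM} elsewhere in the paper). Your remark that ``every step of the original argument carries through with $z$ acting as an inert decoration'' is accurate as far as the $z$-bookkeeping goes --- that part really is cosmetic --- but the deferral to \cite{HM} is load-bearing, and a self-contained proof would have to state and invoke that extra hypothesis explicitly.
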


\bigskip  
\noindent {\it Proof of Theorem \ref{mKTheta}.} Assume first that $\Psi$ is $\theta$-symmetric.  We have shown in Lemma \ref{GmodK} that $m_K (\Theta)$ is the index of ${\mathcal K}_\theta$ in ${\mathcal G}_\theta$.  By definition, ${\mathcal K}_\theta$ is the image of $K\cap G_\theta$ in ${\mathcal G}_\theta$.  But, according to Lemma \ref{nicefactor}, $K\cap G_\theta$ is a product of $K^0\cap G_\theta$ with various pro-$p$-groups $J^{i,\theta}$.  
Since ${\mathcal G}_\theta$ is an elementary finite abelian 2-group, the groups $J^{i,\theta}$ have trivial image in ${\mathcal G}_\theta$.  
Therefore, ${\mathcal K}_\theta$ is identical to the image of $K^0\cap G_\theta$ in ${\mathcal G}_\theta$.  
Thus our claim follows when $\Psi$ is $\theta$-symmetric.

If $\Psi$ is  not necessarily  $\theta$-symmetric but $\langle \Theta',\xi\rangle_K$ is nonzero then  Proposition 5.9 and Lemma 5.19 \cite{HM} imply that there exists $\theta'\in \Theta'$ and a $\theta'$-symmetric refactorization $\dot\Psi$ of $\Psi$.  This implies that there exists $k\in K$ such that $k\cdot \dot\Psi$ is $\theta$-symmetric and our claim follows the argument in the previous paragraph.\hfill$\square$

\subsubsection{A simplified formula for $\langle\Theta,\Psi\rangle_G$}
\label{subsub:simplified}
Equation (\ref{fixedeqn}) can be reformulated in terms of $K^0$-orbits of involutions rather than $K$-orbits.  Since $K^0$ can have a much simpler structure than $K$, this reformulation should be regarded as a useful simplification in applications.  
The idea of reducing the theory of distinguished tame supercuspidal representations to objects involving the group $\bG^0$ in the cuspidal $G$-datum is pursued further in \cite{M1}.

Suppose $\Psi = (\vec\bG,y,\rho,\vec\phi)$ is a cuspidal $G$-datum.  Then $\Psi$ determines subgroups $K^0 = K^0(\Psi)$ and $K = K(\Psi)$ as discussed above.

\begin{lemma}
\label{lem:K^0}
Let $\alpha$ be an $F$-automorphism of $\bG$.  Then $\alpha$ stabilizes $K^0$ if and only if it stabilizes both $\bG^0$ and $[y]$.
\end{lemma}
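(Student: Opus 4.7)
\emph{Proof plan.}  I treat the two directions separately.

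For the forward implication, if $\alpha(\bG^0)=\bG^0$ and $\alpha[y]=[y]$, then for any $k\in K^0=G^0_{[y]}$, the element $\alpha(k)$ lies in $\alpha(G^0)=G^0$ and, using $\alpha$-equivariance of the $\bG$-action on $\cB_{\rm red}(\bG,F)$,
\[
\alpha(k)\cdot[y] \;=\; \alpha(k)\cdot\alpha[y] \;=\; \alpha(k\cdot[y]) \;=\; \alpha[y] \;=\; [y].
\]
Thus $\alpha(K^0)\subseteq K^0$, and the reverse inclusion comes from applying the same argument to $\alpha^{-1}$.

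For the converse, assume $\alpha(K^0)=K^0$.  The first step is to establish $\alpha(\bG^0)=\bG^0$.  Here I would use that $K^0$ contains the parahoric $G^0_{y,0}$, which is open in $G^0=\bG^0(F)$ and hence Zariski-dense in the connected reductive $F$-group $\bG^0$.  Consequently $\bG^0$ equals the Zariski closure of $K^0$ in $\bG$, and this closure is preserved by any $F$-automorphism of $\bG$ preserving $K^0$.

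For the second step, $\alpha$ now restricts to an $F$-automorphism of $\bG^0$ and acts on $\cB_{\rm red}(\bG^0,F)$; under condition D1 (the $F$-anisotropy of $\bZ^0/\bZ$), this building embeds $\alpha$-equivariantly into $\cB_{\rm red}(\bG,F)$ with $[y]$ in its image.  The functorial identity $\alpha(G^0_{[z]})=G^0_{\alpha[z]}$, combined with $\alpha(K^0)=K^0$, yields $G^0_{\alpha[y]}=G^0_{[y]}=K^0$, so $\alpha[y]$ and $[y]$ are two points of $\cB_{\rm red}(\bG^0,F)$ sharing the same full $G^0$-stabilizer.

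The main obstacle is the concluding deduction that $\alpha[y]=[y]$ from the equality of these stabilizers.  I would handle this via the Bruhat--Tits structure of $K^0$: from $K^0$ one intrinsically recovers the parahoric $G^0_{y,0}$ (for instance through its pro-$p$ radical $G^0_{y,0^+}$, identifiable as the unique maximal normal pro-$p$ subgroup of $K^0$), hence the open facet of $y$ in $\cB(\bG^0,F)$; passage to the reduced building together with the knowledge of the full stabilizer $K^0$ then singles out $[y]$ uniquely, forcing $\alpha[y]=[y]$.
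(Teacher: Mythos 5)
Your treatment of the forward direction and of the claim $\alpha(\bG^0)=\bG^0$ matches the paper: the authors also dismiss the forward implication as clear and, for the converse, use Zariski density of the open subgroup $K^0\subset G^0$ (citing Lemma 3.2 of [PR]) to force $\bG^0=\alpha(\bG^0)$, via a dimension count on $\bG^0\cap\alpha(\bG^0)$ that is equivalent to your closure argument. It is worth noting that the paper's written proof stops at that point and never addresses $\alpha[y]=[y]$, so in your final step you are attempting more than the authors record.

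That final step, however, has a genuine gap. From $\alpha(K^0)=K^0$ and $\alpha(\bG^0)=\bG^0$ you correctly get $G^0_{\alpha[y]}=G^0_{[y]}$, but equality of stabilizers does not by itself determine a point of the building: for instance, in $\GL_2(F)$ every interior point of an edge of the tree other than the midpoint has the same stabilizer, so the assertion that ``knowledge of the full stabilizer $K^0$ singles out $[y]$ uniquely'' fails as stated; likewise, recovering only the open facet of $y$ (via the pro-$p$ radical) cannot pin down $[y]$ when the facet is positive-dimensional. The missing input is condition D4: since $\ind_{K^0}^{G^0}\rho$ is irreducible supercuspidal, the image of $y$ in $\cB_{\rm red}(\bG^0,F)$ is a vertex. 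Once this is invoked you can also finish more directly, avoiding your (itself nontrivial and unproved) claim that $G^0_{y,0^+}$ is the unique maximal normal pro-$p$ subgroup of $K^0$: since $G^0_{y,0}\subseteq K^0=G^0_{\alpha[y]}$, the point $\alpha[y]$ lies in the fixed-point set of the maximal parahoric $G^0_{y,0}$ in the reduced building, and that fixed-point set is the closure of the facet of $[y]$, i.e.\ exactly $\{[y]\}$; the $F$-anisotropy of $\bZ^0/\bZ$ (condition D1) identifies the image of $y$ in $\cB_{\rm red}(\bG^0,F)$ with $[y]\in\cB_{\rm red}(\bG,F)$ for this purpose, giving $\alpha[y]=[y]$.
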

\begin{proof}
Clearly, if $\alpha$ stabilizes $\bG^0$ and $[y]\subset \cB(\bG^0,F)$, then $\alpha$ must stabilize $G^0_{[y]} = K^0$.

Conversely, suppose that $\alpha$ stabilizes $K^0$.  Then $K^0\subset G^0\cap \alpha(G^0)$.  Since $K^0$ is an open subgroup of $G^0$, it is dense in $\bG^0\cap\alpha(\bG^0)$ with respect to the Zariski topology.  (See Lemma 3.2 of~\cite{PR}.)  Thus $\bG^0\cap\alpha(\bG^0)$ must have dimension equal to that of $\bG^0$, which forces $\bG^0 = \alpha(\bG^0)$.
\end{proof}

\begin{lemma}
\label{lem:weyl}
Let $\bT^0$ be the connected component of the identity in $\bZ^0$.  Then
$$N_\bG(\bT^0)(F)\cap G_{y,0^+} = G^0_{y,0^+} .$$
\end{lemma}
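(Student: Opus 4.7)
My plan is to prove the nontrivial containment $N_\bG(\bT^0)(F) \cap G_{y,0^+} \subseteq G^0_{y,0^+}$; the reverse containment is immediate from $\bG^0 \subseteq Z_\bG(\bT^0) \subseteq N_\bG(\bT^0)$ together with the standard tame-Levi compatibility $G^0_{y,0^+} \subseteq G_{y,0^+}$. By the same compatibility, $G^0_{y,0^+} = G^0 \cap G_{y,0^+}$, so the containment reduces to showing that every $g \in N_\bG(\bT^0)(F) \cap G_{y,0^+}$ belongs to $G^0$. Since $\bG^0$ is a tame twisted Levi whose center has identity component $\bT^0$, the standard structure theorem for Levi subgroups yields $\bG^0 = Z_\bG(\bT^0)$, so the task becomes showing that the $F$-automorphism $\sigma := \Int(g)|_{\bT^0}$ of $\bT^0$ is trivial.

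Because $N_\bG(\bT^0)/\bG^0$ is a finite \'etale $F$-group, $\sigma$ has finite order, say $m$. I would deliberately avoid any prime-to-$p$ argument here, since $|N_\bG(\bT^0)/\bG^0|$ may well be divisible by $p$ even in the tame setup. Instead I pass to the Lie algebra: choose $s>0$ with $g \in G_{y,s}$ and set $N := d\sigma - 1 = \Ad(g)|_{\tg^0} - 1$. The standard Moy--Prasad estimate $(\Ad(g)-1)(\g_{y,r}) \subseteq \g_{y,r+s}$, combined with the tame-Levi compatibility $\tg^0_{y,r} = \tg^0 \cap \g_{y,r}$, gives $N(\tg^0_{y,r}) \subseteq \tg^0_{y,r+s}$; iterating, $N^k(\tg^0_{y,r}) \subseteq \tg^0_{y,r+ks}$, so $N$ is topologically nilpotent for the Moy--Prasad filtration on $\tg^0$.

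Since $\ord(k)$ grows only logarithmically while $ks$ grows linearly in $k$, the formal logarithm
$$L := \log(d\sigma) = \sum_{k \geq 1} \frac{(-1)^{k-1}}{k}\, N^k$$
converges to a well-defined endomorphism of $\tg^0$. The identity $\log(A^m) = m\log(A)$ for an operator $A$ commuting with itself then yields $mL = \log((d\sigma)^m) = \log(1) = 0$, and since $F$ has characteristic $0$ and $m$ is a nonzero integer, we conclude $L = 0$. Hence $d\sigma = \exp(L) = 1$ in $\GL(\tg^0)$, and since $\bT^0$ is a torus over a characteristic-$0$ field its $F$-automorphisms are determined by their differentials, so $\sigma = 1$ and $g \in Z_\bG(\bT^0)(F) = G^0$, as required.

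The main obstacle, in my view, is purely technical: installing the $p$-adic $\log/\exp$ inside the filtered setting and verifying convergence in the Moy--Prasad topology. Once that is in place, the characteristic-zero cancellation $mL = 0 \Rightarrow L = 0$ handles the delicate case where $m$ is a power of $p$, sidestepping any need to assume $|N_\bG(\bT^0)/\bG^0|$ is coprime to $p$.
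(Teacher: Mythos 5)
There is a genuine gap, and it sits exactly where you believe you have been cleverest. The chain $mL=\log((d\sigma)^m)=\log(1)=0$, hence $L=0$, can indeed be made rigorous by ultrametric rearrangement; the failure is the last link, $L=0\Rightarrow d\sigma=\exp(L)=1$. The $p$-adic logarithm is not injective on topologically unipotent operators: it annihilates every element of $p$-power order. Since $\sigma$ has finite order, $d\sigma$ is semisimple, and if its eigenvalues are $p$-power roots of unity then $\log(d\sigma)=0$ holds automatically with $d\sigma\neq 1$. The identity $\exp(\log(A))=A$ you invoke is valid only when $A-1$ is small enough (depth gain exceeding $\ord(p)/(p-1)$), whereas your $N$ only gains the depth $s$ of $g$, which may be arbitrarily close to $0$. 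Worse, the operator-level principle your proof actually establishes hypotheses for -- ``a finite-order operator $A$ with $(A-1)\Lambda_r\subseteq\Lambda_{r+s}$ for all $r$ and some fixed $s>0$ is the identity'' -- is false: take $L/F$ wildly ramified cyclic of degree $p$, let $\tau$ generate $\gal(L/F)$, and view $L$ as the Lie algebra of $R_{L/F}\GL_1$ with its valuation filtration; then $\tau-1$ raises valuations by a fixed positive amount, $\tau$ has order $p$, and $\log\tau=0$. So the case $m$ divisible by $p$, which you claim to have sidestepped, is precisely the case your argument cannot exclude, and nothing in your proof uses the tameness hypotheses that rule out such wild behavior.

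The paper avoids this by never leaving the group/building level. It reduces to the splitting field $E'$ and, using that $g$ has trivial image in $\mathsf G_y(\f_{E'})$, conjugates by an element of $\bG^0(E')_{y,0^+}$ to replace $\bT^0$ by the maximal torus $\bT$; it then shows $N_\bG(\bT)(E')\cap\bG(E')_{y,0^+}\subseteq\bT(E')$ because an element with trivial image in the reductive quotient fixes every facet of $A(\bG,\bT,E')$ containing the vertex $y$, hence acts trivially on the apartment and has trivial image in the Weyl group $W(\bG,\bT)$. The essential input you discard by passing to the Moy--Prasad estimate is exactly this: triviality of $g$ in $\mathsf G_y$ controls its action on the apartment, not merely its asymptotic effect on the filtration. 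To repair your approach you would need either to show (using tameness) that the induced element of $N_\bG(\bT^0)/Z_\bG(\bT^0)$ has order prime to $p$, so that a prime-to-$p$ root-of-unity argument applies, or to abandon the Lie-algebra estimate in favor of the building argument.
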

\begin{proof}
Since $\bG^0 = Z_\bG(\bT^0)$, we have
$$G^0_{y,0^+}\subset Z_\bG(\bT^0)(F)\cap G_{y,0^+} \subset N_\bG(\bT^0)(F)\cap G_{y,0^+}.$$
It remains to prove the inclusion
$N_\bG(\bT^0)(F)\cap G_{y,0^+} \subset G^0_{y,0^+}$.
Since $G^0_{y,0^+} = G^0\cap G_{y,0^+}$, it is enough to show that $N_\bG(\bT^0)(F)\cap G_{y,0^+} \subset G^0$.  Moreover, 
it suffices to do this over a splitting field $E'$ of $\bT$, i.e., to show that $N_\bG(\bT^0)(E')\cap \bG(E')_{y,0^+} \subset \bG^0(E')$.
We first show that it is furthermore enough to prove the analogue of this statement in which $\bT^0$ replaced by the maximal torus $\bT$.

Let $\bT' = \Int(g)(\bT)$.  Then $\bT'$ is an $E'$-split maximal torus of $\bG^0$.  Since $g$ fixes $y$, we have $y\in A(\bG^0,\bT,E')\cap A(\bG^0,\bT',E')$.  Let $\mathsf T$ and $\mathsf T'$ be the maximal $\f_{E'}$-tori of $\mathsf G_y$ associated respectively to $\bT$ and $\bT'$ (see the appendix).  
Since the image of $g$ in $\mathsf G_y(\f_{E'})$ is trivial, we have $\mathsf T = \mathsf T'$.  It follows that there exists $h\in\bG^0(E')_{y,0^+}$ such that $\Int(h)(\bT') = \bT$.  Note that $hg\in N_\bG(\bT)(E')\cap\bG(E')_{y,0^+}$.  Thus, if we can show that 
\begin{equation}
\label{eq:inclusion}
N_\bG(\bT)(E')\cap \bG(E')_{y,0^+} \subset \bG^0(E'),
\end{equation}
it will follow that $hg\in \bG^0(E')$ so $g\in \bG^0(E')$.

It remains to prove (\ref{eq:inclusion}).
Suppose $k\in N_\bG(\bT)(E')\cap\bG(E')_{y,0^+}$.  Then the image of $k$ in $\mathsf G_y(\f_{E'})$ is trival, hence is contained in every Levi subgroup of $\mathsf G_y$ containing $\mathsf T$.  It follows that $k$ must fix pointwise every facet of $A(\bG,\bT,E')$ containing $y$.  Thus $k$ acts trivially on $A(\bG,\bT,E')$ so the image of $k$ in the Weyl group $W(\bG,\bT)$ of $\bT$ in $\bG$ is trivial.  It follows that $k\in\bT(E')\subset\bG^0(E')$, demonstrating (\ref{eq:inclusion}).
\end{proof}

\begin{proposition}
\label{prop:K^0}
Let $\theta$ be an involution of $G$ and suppose $\theta (K^0) = K^0$.  Let $k\in K$.  The following statements are equivalent.  Then $(k\cdot\theta)(K^0) = K^0$ if and only if $k\in K^0$.
\end{proposition}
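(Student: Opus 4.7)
The forward implication is immediate: if $k \in K^0$, then $\Int(k)$ preserves $K^0$, and $\theta(K^0) = K^0$ by hypothesis, so the composite $k\cdot\theta = \Int(k)\circ\theta\circ\Int(k^{-1})$ preserves $K^0$.

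For the reverse implication, assume $(k\cdot\theta)(K^0) = K^0$. A direct calculation gives
$(k\cdot\theta)\circ\theta = \Int(k\,\theta(k)^{-1})$,
so this inner automorphism stabilizes $K^0$, which places $k\,\theta(k)^{-1}$ in $N_G(K^0)$. Applying Lemma~\ref{lem:K^0} to $\Int(k\,\theta(k)^{-1})$, I conclude that $k\,\theta(k)^{-1}$ normalizes $\bG^0$ and fixes $[y]$. The second condition is automatic: the hypothesis $\theta(K^0)=K^0$ forces $\theta([y])=[y]$ (by Lemma~\ref{lem:K^0} applied to $\theta$), so $\theta$ preserves $G_{[y]}$, and $k,\theta(k)\in G_{[y]}\supset K$. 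Since $\bG^0 = Z_\bG(\bT^0)$ implies $N_\bG(\bG^0) = N_\bG(\bT^0)$, I obtain $k\,\theta(k)^{-1} \in N_\bG(\bT^0)(F)$.

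Next I would exploit the decomposition $K = K^0 J^1\cdots J^d$. Writing $k = k^0 k^+$ with $k^0\in K^0$ and $k^+ \in J^1\cdots J^d \subset G_{y,0^+}$, a short computation shows $(k\cdot\theta)(K^0) = k^0\,(k^+\cdot\theta)(K^0)\,(k^0)^{-1}$, so the hypothesis reduces to $(k^+\cdot\theta)(K^0) = K^0$, and the conclusion $k\in K^0$ reduces to $k^+\in K^0$. Because $\theta$ fixes $[y]$, the Moy--Prasad subgroup $G_{y,0^+}$ (which depends only on $[y]$) is $\theta$-stable, so $\theta(k^+)\in G_{y,0^+}$ and $k^+\,\theta(k^+)^{-1}\in G_{y,0^+}$. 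Combining with the previous paragraph, $k^+\,\theta(k^+)^{-1}\in N_\bG(\bT^0)(F)\cap G_{y,0^+} = G^0_{y,0^+}$ by Lemma~\ref{lem:weyl}.

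The remaining task---and the step I expect to be the main obstacle---is deducing $k^+ \in G^0_{y,0^+}$ (which lies in $K^0$) from the fact that $k^+\in G_{y,0^+}$ and $k^+\,\theta(k^+)^{-1}\in G^0_{y,0^+}$. This is a ``$\theta$-descent'' statement for the map $g\mapsto g\theta(g)^{-1}$ on the pro-$p$ group $G_{y,0^+}$. I would approach it by induction along the Moy--Prasad filtration $G_{y,r_1}\supset G_{y,r_2}\supset\cdots$ of $G_{y,0^+}$: each successive quotient $\g_{y,r_i:r_i^+}$ is an $\f$-vector space on which $\theta$ acts, and since $p\neq 2$ it decomposes into $\pm 1$-eigenspaces; the root decomposition with respect to $\bT^0$ further splits it into a ``$\bG^0$-part'' and its complement. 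The descent should then follow by peeling off, at each step, the $\theta$-anti-invariant component lying outside $\bG^0$ (which must vanish by the hypothesis $k^+\theta(k^+)^{-1}\in G^0_{y,0^+}$), using the specific structure of the $J^i$ to ensure the remaining ``$\theta$-invariant complementary'' component is already absent in $J^1\cdots J^d$. The subtlety lies in carrying out this inductive bookkeeping compatibly with Yu's twisted Levi filtration.
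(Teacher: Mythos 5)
Your forward direction and your normalizer analysis are sound and in fact retrace the paper's own route: Lemma~\ref{lem:K^0}, the identification $N_\bG(\bG^0)=N_\bG(\bT^0)$, and Lemma~\ref{lem:weyl} applied after writing $k=k_0j$ with $j\in J^1\cdots J^d\subset G_{y,0^+}$ are exactly the ingredients the paper uses to show $N_K(K^0)=K^0$, and your reduction to ``$k^+\in G_{y,0^+}$ and $k^+\theta(k^+)^{-1}\in G^0_{y,0^+}$'' reaches the same intermediate point. The genuine gap is the final ``$\theta$-descent'' you defer: from $k^+\theta(k^+)^{-1}\in G^0_{y,0^+}$ one cannot deduce $k^+\in G^0_{y,0^+}$, and no induction along the Moy--Prasad filtration will produce it, because the implication is false. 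Any nontrivial $\theta$-fixed element $k^+$ of $J^1\cdots J^d$ satisfies $k^+\theta(k^+)^{-1}=1\in G^0_{y,0^+}$ while lying outside $K^0$, and such elements exist in general: Proposition~3.14 of \cite{HM} (quoted in this paper) gives $K^\theta=K^{0,\theta}J^{1,\theta}\cdots J^{d,\theta}$, and the groups $J^{i,\theta}$ are typically nontrivial (they are the source of the spaces $W_i^+$ used later to define $\eta'_\theta$). So your hope that ``the $\theta$-invariant complementary component is already absent in $J^1\cdots J^d$'' is precisely what fails; the fiber of $k\mapsto k\cdot\theta$ over a fixed involution contains whole $G^\theta$-cosets, so membership of $k$ itself in $K^0$ cannot be forced.

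The paper's proof does something weaker at this last step, and that weaker conclusion is what is actually needed downstream. Having shown $j\theta(j)^{-1}\in K^0$ (in fact in $K^0_+$), it invokes Proposition~2.12 of \cite{HM} to choose $g\in K^0_+=G^0_{y,0^+}$ with $j\theta(j)^{-1}=g\theta(g)^{-1}$, and then concludes only that $k\cdot\theta=\Int\bigl(k\theta(k)^{-1}\bigr)\circ\theta=(k_0g)\cdot\theta$ with $k_0g\in K^0$; that is, $k\cdot\theta$ lies in the $K^0$-orbit of $\theta$. It is in this form (equality of involutions up to $K^0$, not membership of $k$) that the proposition is applied, e.g.\ in Proposition~\ref{prop:refactorization}. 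To repair your argument, stop once you know $k^+\theta(k^+)^{-1}\in K^0_+$ (note $\theta([y])=[y]$ makes $G_{y,0^+}$ $\theta$-stable, so this element lies in $K^0\cap G_{y,0^+}=K^0_+$), then use the $2$-divisibility/surjectivity statement of \cite{HM} (Lemma~2.11/Proposition~2.12) to replace $k^+$ by an element of $K^0_+$ with the same value of $x\mapsto x\theta(x)^{-1}$, and conclude the orbit statement rather than $k\in K^0$.
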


\begin{proof}
It is clear that if $k\in K^0$ then $k\cdot\theta$ must stabilize $K^0$.
Conversely, suppose $\theta' = k\cdot\theta$ stabilizes $K^0$, where $k\in K$.   We have $k = k_0 j$, for some $k_0\in K^0$ and $j\in J^1\cdots J^d\subset G_{y,0^+}$.
The condition $\theta' (K^0) = K^0$ is equivalent to the condition that $k\theta (k)^{-1}$ lies in the normalizer $N_K(K^0)$ of $K^0$ in $K$.

We claim that $N_K (K^0) =K^0$.  Assume, for the moment, that this is the case.  Then 
$$k_0j\theta (j)^{-1} \theta (k_0)^{-1}\in N_K (K^0) = K^0,$$
and hence $j\theta (j)^{-1}\in K^0$.  According to Proposition 2.12 \cite{HM}, we may choose $g\in K^0_+ =G^0_{y,0^+}$ such that $j\theta (j)^{-1} = g\theta (g)^{-1}$.  Then $\theta' = \Int (k\theta (k)^{-1})\circ \theta = k'\cdot \theta$, where $k' = k_0g\in K^0$, and our claim follows.

It therefore suffices to show that $N_K(K^0)=K^0$.
Let $\bZ^0$ be the center of $\bG^0$ and let $\bT^0$ be the connected component of the identity in $\bZ^0$.
We first observe that $N_K(K^0) = N_K(\bG^0) = N_\bG(\bG^0)(F)\cap K$.  This follows from Lemma~\ref{lem:K^0} applied to the automorphisms $\Int(k)$ for $k\in K$.

We now have that We now show that  $N_\bG (K^0) = N_\bG (\bG^0) = N_\bG (\bZ^0) = N_\bG (\bT^0)$.  This is done as follows. Clearly, $N_\bG (\bZ^0)\subseteq N_\bG (\bT^0)$.  Now suppose $g\in N_{\bG}(\bT^0)$.  If $t\in \bT^0$ then $g^{-1}tg\in \bT^0$.  But if $h\in \bG^0 = Z_\bG (\bT^0)$, we have $hg^{-1}tgh^{-1} = g^{-1} t g$.  This implies $ghg^{-1}tgh^{-1} g^{-1}=t$.  Thus $ghg^{-1}\in Z_\bG (\bT^0) = \bG^0$ and so $g\in N_\bG (\bG^0)$.  This shows that $N_\bG (\bT^0)\subseteq N_\bG (\bG^0)$.  Assume next that $g\in N_\bG (\bG^0)$.  Then $\Int (g)$ is an automorphism of $\bG^0$ and thus it must preserve the center $\bZ^0$ of $\bG^0$ and its identity component $\bT^0$.
So we have shown $N_\bG (\bZ^0)\subseteq N_\bG(\bT^0)\subseteq N_\bG (\bG^0) \subseteq N_\bG (\bZ^0)$, which implies that the latter inclusions are all equalities.

We now have $N_K(K^0) = N_K(\bG^0)\cap K = N_\bG(\bT^0)\cap K$.
We are thus reduced to showing that $N_K(\bT^0) = K^0$.  Clearly, we have $N_K(\bT^0) \supset K^0$.
So suppose $k\in N_K (\bT^0)$.  As above, we write $k = k_0 j$, with $k_0\in K^0$ and $j\in J^1\cdots J^d \subset G_{y,0^+}$.  To say that $k$ normalizes $\bT^0$ is the same as saying that $j$ normalizes $\bT^0$.  But then, by Lemma~\ref{lem:weyl}, $j$ must lie in $G^0_{y,0^+}$.  Thus $k\in K^0$, and so $N_K(\bT^0) = K^0$.
\end{proof}

We now define a refinement of the $K$-equivalence on cuspidal $G$-data.  Let $[\Psi]$ denote the class of all cuspidal $G$-data related to $\Psi$ via a combination of refactorization and elementary transformation (as in \S5.1 of~\cite{HM}).  Observe that the action of an element of $K^0$ via conjugation on an element of $[\Psi]$ coincides with an elementary transformation.  Hence, $[\Psi]$ is preserved by the action of $K^0$.  We will refer to $[\Psi]$ as the \textit{refactorization class of $\Psi$.}  Note that as $\Psi$ ranges over its refactorization class, $\vec\bG$, $K$, $K^0$, and $[y]$ do not vary, while $\rho$ and $\vec\phi$ do vary.  Nevertheless, the equivalence class of the representation
$$\rho' = \rho\otimes(\phi|K^0)$$
is an invariant of the refactorization class.  Here $\phi$ is the quasicharacter of $G^0$ given by
$$\phi = \prod_{i=0}^d\phi_i|G^0.$$
Note that $\rho'|K_+^0$ is a multiple of $\phi|K_+^0$.

For an involution $\theta$ of $G$, let $[\theta]$ denote the $K^0$-orbit of $\theta$.  Consider the following two conditions on $\theta$ and $\Psi$:
\begin{enumerate}
\item $\theta$ stabilizes $K^0$.
\item $\phi|K_+^\theta =1$.
\end{enumerate}
Clearly, if (1) holds for $\theta$, then it must do so for every element of $[\theta]$.  Similarly, Lemma 5.5 of~\cite{HM} implies the analogous statement for (2).  It follows that both conditions depend only on the $K^0$-orbit $[\theta]$ and the refactorization class $[\Psi]$.  We write $[\theta]\sim[\Psi]$ when both of the above conditions hold.

\begin{proposition}
\label{prop:refactorization}
Let $\theta$ be an involution of $G$ such that $[\theta]\sim[\Psi]$.  Let $\xi$ be the $K$-equivalence class of $\Psi$ and $\Theta'$ the $K$-orbit of $\theta$.
\begin{enumerate}
\item If $\theta'\in K\cdot\theta$ and $[\theta']\sim[\Psi]$, then $[\theta']=[\theta]$.
\item If $\Psi'\in\xi$ and $[\theta]\sim [\Psi']$, then $[\Psi']=[\Psi]$.
\end{enumerate}
\end{proposition}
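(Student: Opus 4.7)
The plan is to reduce both statements to Proposition~\ref{prop:K^0}, which will end up using only the first clause of the relation $\sim$ (stabilization of $K^0$ by the involution); the second clause on $\phi$ will play no role in the argument.

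For part (1), I would write $\theta' = k\cdot\theta$ for some $k\in K$, which is possible because $\theta'\in K\cdot\theta$. The standing hypothesis $[\theta]\sim[\Psi]$ gives $\theta(K^0)=K^0$, while $[\theta']\sim[\Psi]$ gives $(k\cdot\theta)(K^0)=K^0$. Proposition~\ref{prop:K^0} then forces $k\in K^0$, so $\theta'\in K^0\cdot\theta$, i.e., $[\theta']=[\theta]$.

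For part (2), I would use the fact that $\xi$ is generated by refactorization, elementary transformation, and $K$-conjugation, the first two of which preserve the refactorization class. Thus any $\Psi'\in\xi$ can be written $\Psi' = k\cdot\dot\Psi$ for some $\dot\Psi\in[\Psi]$ and some $k\in K$. Since refactorization leaves $\vec\bG$ unchanged, $G^0(\Psi') = k G^0 k^{-1}$; combined with the fact that $K$ stabilizes $[y]$, this yields $K^0(\Psi') = k K^0 k^{-1}$. The hypothesis $[\theta]\sim[\Psi']$ now asserts that $\theta$ stabilizes $k K^0 k^{-1}$, and rearranging gives $(k^{-1}\cdot\theta)(K^0) = K^0$. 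Because $\theta$ already stabilizes $K^0$ by $[\theta]\sim[\Psi]$, Proposition~\ref{prop:K^0} applied with $k^{-1}\in K$ in place of $k$ forces $k^{-1}\in K^0$, hence $k\in K^0$. Since conjugation by elements of $K^0$ coincides with an elementary transformation and therefore preserves refactorization classes, $\Psi' = k\cdot\dot\Psi\in [\dot\Psi] = [\Psi]$.

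The main subtlety, and essentially the only point requiring care, is the bookkeeping in part (2): one must track how $K^0$ varies under $K$-conjugation within $\xi$ and then recast the stabilization condition on $K^0(\Psi')$ as a stabilization condition on the original $K^0$ by the conjugated involution $k^{-1}\cdot\theta$. Once this translation is made, Proposition~\ref{prop:K^0} applies verbatim.
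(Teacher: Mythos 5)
Your argument is correct and runs on the same mechanism as the paper's, but the packaging differs in a way worth noting. For part (1) the paper does not cite Proposition~\ref{prop:K^0}: it re-runs the underlying cocycle argument, showing $k\theta(k)^{-1}\in K^0$ via Lemma~\ref{lem:K^0}, writing $k=k_0j$ with $j\in J^1\cdots J^d$, and using $2$-divisibility of $K^0_+$ together with Lemma 2.11 of \cite{HM} to produce $c\in K^0_+$ with $\theta'=\Int(c\,\theta(c)^{-1})\circ\theta=c\cdot\theta$. The reason this matters is that the ``only if'' direction of Proposition~\ref{prop:K^0}, as actually proved, yields some $k'\in K^0$ with $k\cdot\theta=k'\cdot\theta$ rather than $k\in K^0$ itself (the literal conclusion $k\in K^0$ can fail, e.g.\ for a $\theta$-fixed element of $J^1\cdots J^d$ not lying in $K^0$, which satisfies $k\cdot\theta=\theta$). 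So in your part (1) you should quote that proposition in this orbit form; since the desired conclusion $[\theta']=[\theta]$ \emph{is} the orbit statement, your proof is unaffected, and your observation that only the $K^0$-stabilization clause of $\sim$ enters is accurate. For part (2), your bookkeeping $K^0(\Psi')=kK^0k^{-1}$ and the recasting of $[\theta]\sim[\Psi']$ as $(k^{-1}\cdot\theta)(K^0)=K^0$ is exactly the step the paper compresses into ``it follows easily that $[k^{-1}\cdot\theta]\sim[\dot\Psi]$''; the paper then detours through part (1) before invoking Proposition~\ref{prop:K^0} to conclude $k\in K^0$, a detour your version shows to be unnecessary. At that point you rely on Proposition~\ref{prop:K^0} in precisely the same way the paper's own proof does (and here, unlike in part (1), the full conclusion $k\in K^0$ is what one wants, so that conjugation by $k$ is an elementary transformation preserving $[\Psi]$), so your proposal contains no gap relative to the paper's argument.
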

\begin{proof}
Let $k\in K$ be such that $\theta' = k\cdot\theta$.  Since both $\theta$ and $k\cdot\theta$ stabilize $K^0$, they must stabilize both $[y]$ and $\bG^0$ by Lemma~\ref{lem:K^0}.  It follows that $k\theta(k)^{-1}$ must stabilize $[y]$ as well, hence must lie in $K^0$.  Again, write $k = k_0j$, with $k_0\in K^0$ and $j\in J^1\cdots J^d\in G_{y,0^+}$.  Then $j\theta(j)^{-1}\in K^0\cap G_{y,0^+} = K^0_+$.  As in ~\S2.2 of \cite{HM}, let 
$$Z_\theta^1(K^0_+):=\{z\in K^0_+ : \theta(z) = z^{-1}\}.$$
Then $j\theta(j) ^{-1}\in Z_\theta^1(K^0_+)$.  But since $K^0_+$ is $2$-divisible, Lemma 2.11 of~\cite{HM} implies that $j\theta(j)^{-1} = c\,\theta(c)^{-1}$ for some $c\in K^0_+$.  Thus
$$\theta' = k\cdot\theta = \Int(k\,\theta(k)^{-1})\circ\theta = \Int (c\,\theta(c)^{-1})\circ\theta = c\cdot\theta.$$
This proves (i).

Now suppose $\Psi'\in\xi$ and $[\theta]\sim[\Psi']$.  Then there exist $k\in K$ and a $G$-datum $\dot\Psi$ in the refactorization class of $\Psi$ such that $\Psi' = {}^k\dot\Psi$.  It follows easily that $[k^{-1}\cdot\theta]\sim[\dot\Psi]$.  Since $[\theta]\sim[\Psi] = [\dot\Psi]$, (ii) implies that $[k^{-1}\cdot\theta] = [\theta]$.  Thus both $\theta$ and $k^{-1}\cdot\theta$ stabilize $K^0$.  By Proposition~\ref{prop:K^0}, it follows that $k$ must lie in $K^0$, i.e., that $\Psi' = {}^k\dot\Psi$ is in the refactorization class of $\Psi$, which proves (ii).
\end{proof}

\begin{proposition}
\label{prop:theta-symmetry}
Let $\theta$ be an involution of $G$.  Then $[\theta]\sim[\Psi]$ if and only if there exists $\dot\Psi\in[\Psi]$ such that $\dot\Psi$ is $\theta$-symmetric.  If this is the case, then $\dot\Psi$ is $\theta'$-symmetric for all $\theta'\in[\theta]$.
\end{proposition}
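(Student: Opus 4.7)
The plan is to dispatch the forward implication and the final assertion by direct verification, and to reduce the reverse implication to existing results in~\cite{HM} combined with Proposition~\ref{prop:K^0}. For the forward direction, suppose $\dot\Psi = (\vec\bG, y, \dot\rho, \vec{\dot\phi}) \in [\Psi]$ is $\theta$-symmetric. Then $\theta(\bG^0) = \bG^0$ and $\theta([y]) = [y]$, so Lemma~\ref{lem:K^0} yields $\theta(K^0) = K^0$. The identity $\vec{\dot\phi}\circ \theta = \vec{\dot\phi}{}^{-1}$ restricts on $G^0$ to $\dot\phi \circ \theta = \dot\phi^{-1}$, so every $k \in K^{0,\theta}_+$ satisfies $\dot\phi(k)^2 = \dot\phi(k)\dot\phi(\theta(k)) = 1$. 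Since $K^{0,\theta}_+$ is a closed subgroup of the pro-$p$ group $K^0_+$ and $p$ is odd, any continuous character of $K^{0,\theta}_+$ into $\C^\times$ takes values in roots of unity of $p$-power order; this rules out $\dot\phi(k) = -1$, giving $\dot\phi(k) = 1$. Since Lemma~5.5 of~\cite{HM} shows this condition is refactorization-invariant, $[\theta]\sim[\Psi]$ follows.

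For the final assertion, fix a $\theta$-symmetric $\dot\Psi$ and any $\theta' = k\cdot\theta$ with $k\in K^0$. The inclusion $\bG^0 \subset \bG^i$ gives $K^0 \subset G^0 \subset G^i$, so $k$ normalizes $\bG^i$ and hence $\theta'(\bG^i) = k\theta(\bG^i)k^{-1} = \bG^i$. Since $k$ fixes $[y]$ in the reduced building and $\theta([y]) = [y]$, the induced action gives $\theta'([y]) = k\cdot\theta(k^{-1}\cdot [y]) = k\cdot [y] = [y]$. Finally, each $\phi_i$ is conjugation-invariant on $G^i$ as a homomorphism to an abelian group, so $\phi_i \circ \theta' = \phi_i \circ \theta = \phi_i^{-1}$, establishing $\theta'$-symmetry of $\dot\Psi$.

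For the reverse direction, assume $[\theta] \sim [\Psi]$. Lemma~\ref{lem:K^0} gives $\theta(\bG^0) = \bG^0$ and $\theta([y]) = [y]$. I would apply Proposition~5.20 of~\cite{HM} (together with Proposition~5.9 and Lemma~5.19 of~\cite{HM}) to produce a refactorization $\dot\Psi \in [\Psi]$ and an involution $\theta'' \in K\cdot\theta$ such that $\dot\Psi$ is $\theta''$-symmetric. The forward direction applied to $(\dot\Psi, \theta'')$ then yields $\theta''(K^0) = K^0$; combined with $\theta(K^0) = K^0$ and Proposition~\ref{prop:K^0}, this forces $\theta'' \in K^0\cdot\theta = [\theta]$. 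The final assertion just proved then implies $\dot\Psi$ is itself $\theta$-symmetric. The main obstacle, I expect, lies in verifying that the hypotheses of Proposition~5.20 of~\cite{HM}---formulated there in terms of $K$-orbits and $K$-equivalence classes rather than the $K^0$-level data $([\theta],[\Psi])$ used here---are in fact implied by $[\theta]\sim[\Psi]$, a refinement exercise using Lemma~\ref{lem:K^0} together with the compatibility definitions of~\cite{HM}.
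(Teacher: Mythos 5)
Your forward direction and final assertion are correct and in fact more self-contained than the paper's treatment: you get $\theta(K^0)=K^0$ from Lemma~\ref{lem:K^0}, deduce $\dot\phi|K^{0,\theta}_+=1$ by the elementary $\pm 1$ versus pro-$p$ argument (the same device the paper uses later, in the proof of Theorem~\ref{maintheorem}), and check $\theta'$-symmetry for all $\theta'\in[\theta]$ by direct computation, whereas the paper routes this implication through Propositions 5.5, 5.7 and 5.9 of \cite{HM} and declares the last assertion clear. Your reverse direction is structurally dual to the paper's: the paper fixes $\theta$, uses Proposition 5.9 of \cite{HM} together with Lemma~\ref{lem:K^0} to see that $K\cdot\theta$ and the $K$-equivalence class $\xi$ are moderately compatible, extracts from Proposition 5.7 of \cite{HM} a $\theta$-symmetric datum $\dot\Psi\in\xi$, and then moves the datum into $[\Psi]$ with Proposition~\ref{prop:refactorization}(2); you instead keep the datum inside $[\Psi]$ and move the involution, pushing $\theta''$ into $[\theta]$ via Proposition~\ref{prop:K^0} and then applying your last assertion. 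That variant is sound. The one concrete defect is the source you cite for producing the symmetric pair: Proposition 5.20 and Lemma 5.19 of \cite{HM} take strong compatibility, i.e.\ $\langle K\cdot\theta,\xi\rangle_K\neq 0$, as hypothesis, and this is genuinely stronger than $[\theta]\sim[\Psi]$ (condition (2) only constrains $\phi$ on $K^{0,\theta}_+$ and says nothing about the Hom-space defining strong compatibility), so those results do not apply here. The obstacle you flag at the end is resolved exactly by ingredients you already have in hand: Lemma~\ref{lem:K^0} converts $\theta(K^0)=K^0$ into $\theta(\bG^0)=\bG^0$ and $\theta([y])=[y]$, and these together with $\phi|K^{0,\theta}_+=1$ are what Proposition 5.9 of \cite{HM} needs to yield moderate compatibility of $K\cdot\theta$ and $\xi$; Proposition 5.7 of \cite{HM} (equivalently, the description of moderate compatibility recalled in \S\ref{sec:background}) then supplies the refactorization $\dot\Psi\in[\Psi]$ and the involution $\theta''\in K\cdot\theta$ your argument requires, after which your use of Proposition~\ref{prop:K^0} and the final assertion goes through; the appeals to Proposition 5.20 and Lemma 5.19 should simply be dropped.
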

\begin{proof}
If $[\theta]\sim[\Psi]$, it follows from Proposition 5.9 of~\cite{HM} and Lemma~\ref{lem:K^0} that the $K$-orbit $K\cdot\theta$ and the $K$-equivalence class $\xi$ of $\Psi$ are moderately compatible.  Thus Proposition 5.7 of \emph{loc.\,cit.}~implies that there exists $\dot\Psi\in\xi$ such that $\Psi$ is $\theta$-symmetric.  Then $[\theta]\sim [\dot\Psi]$ so Proposition~\ref{prop:refactorization}(ii) implies that $\dot\Psi\in[\Psi]$.  In addition, is clear that $\dot\Psi$ must be $\theta'$-symmetric with respect to every $\theta'\in[\theta]$.

Conversely, suppose that there exists $\dot\Psi\in[\Psi]$ such that $\dot\Psi$ is $\theta$-symmetric.  Then $\theta(K^0) = K^0$.  Moreover, $K\cdot\theta$ and $\xi$ are moderately compatible by Proposition 5.7 of \emph{loc.\,cit.}  Thus $\phi|K_+^{0,\theta} = 1$ by Propositions 5.5 and 5.9 of \emph{loc.\,cit.}  Thus $[\theta]\sim[\Psi]$.
\end{proof}

Let $\Xi$ be the collection of all $(G,K^0)$-data, i.e., all $G$-data $\Psi'$ such that $K^0(\Psi') = K^0$.  Let $\theta$ be an involution of $G$, and let $\Theta'\subset\Theta$ be the $K$- and $G$-orbits containing $\theta$, respectively.  We define (see Theorem~\ref{mKTheta})
$$m_{K^0}([\theta]) := m_K(\Theta') = [G_\theta: (K^0\cap G_\theta)G^\theta].$$
Of course, this index depends only on $\Theta'$ and not on the particular choice of involution $\theta$.

Let $\Psi = (\vec\bG,y,\rho,\vec\phi) \in\Xi$.
Define
$$\langle [\theta] ,[\Psi]\rangle_{K^0} := 
\begin{cases}
\dim\Hom_{K^{0,\theta}}(\rho',\eta_\theta'), & \text{if $[\theta]\sim [\Psi]$,}\\
0, & \text{otherwise.}
\end{cases}$$
Here $\eta'_\theta$ is a certain character of $K^{0,\theta}$ of exponent two defined in \S5.6 of~\cite{HM} and described explicitly
 in~\S\ref{sec:etriv} of the present paper.  The latter definition gives a pairing between the set of $K^0$-orbits in $\Theta$ and the collection of 
 refactorization classes in $\Xi$.  Note that if some $\dot\Psi\in[\Psi]$ is $\theta$-symmetric, then
$\langle [\theta] ,[\Psi]\rangle_{K^0} = \langle \Theta' ,\xi\rangle_K$,
where $\xi$ is the $K$-equivalence class of $\Psi$.  (See~\S5.6 of~\cite{HM}.)

\begin{theorem}
\label{thm:K^0_formula}
Let $\Psi$, $\xi_0$, $K$, $K_+$, etc.~ be as above.
\begin{enumerate}
\item $\displaystyle\langle \Theta, \Psi\rangle_G = \sum_{[\theta]\sim[\Psi]} m_{K^0} ([\theta])\ \langle [\theta] , [\Psi]\rangle_{K^0}$.
\item Suppose there exists $\theta\in\Theta$ such that $[\theta]\sim[\Psi]$ (as must be the case if $\langle\Theta,\Psi\rangle_G\neq 0$).  Let $g_1,\dots, g_m\in G$ be a maximal sequence such that $g_j\theta(g_j)^{-1}\in K^0$ and the $[\theta_j]$ are distinct.  Then
$$\langle \Theta, \Psi\rangle_G = \sum_{j=1}^m m_{K^0} ([\theta_j])\ \langle [\theta_j] , [\Psi]\rangle_{K^0}.$$
\end{enumerate}
\end{theorem}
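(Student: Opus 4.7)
The plan is to derive both statements from equation~(\ref{fixedeqn}) by passing from a sum over $K$-orbits to one over $K^0$-orbits of involutions.

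For~(1), I begin by observing that Propositions~5.7 and~5.20 of~\cite{HM}, together with Proposition~\ref{prop:theta-symmetry}, imply that a $K$-orbit $\Theta'\subset\Theta$ contributes nontrivially to~(\ref{fixedeqn}) precisely when $\Theta'$ contains a $K^0$-orbit $[\theta]$ with $[\theta]\sim[\Psi]$, and Proposition~\ref{prop:refactorization}(i) shows that such a $[\theta]$ is unique within $\Theta'$. Theorem~\ref{mKTheta} identifies $m_K(\Theta')$ with $m_{K^0}([\theta])$, while the discussion preceding Theorem~\ref{thm:K^0_formula} identifies $\langle\Theta',\xi\rangle_K$ with $\langle[\theta],[\Psi]\rangle_{K^0}$ whenever $[\theta]\sim[\Psi]$. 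Assembling these observations reindexes the sum in~(\ref{fixedeqn}) as the sum in~(1); any $K^0$-orbit indexing a term of~(1) whose ambient $K$-orbit contributes trivially to~(\ref{fixedeqn}) must itself satisfy $\langle[\theta],[\Psi]\rangle_{K^0}=0$, hence adds nothing.

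For~(2), the plan is to show that the orbits $\{[g_j\cdot\theta]\}_{j=1}^{m}$ exhaust the set of $K^0$-orbits $[\theta']\sim[\Psi]$ inside $\Theta$, so that~(2) becomes a mere reindexing of~(1). The easy direction is that $g_j\theta(g_j)^{-1}\in K^0$ already implies $[g_j\cdot\theta]\sim[\Psi]$: condition~(1) of the $\sim$ relation follows from the identity $g_j\cdot\theta=\Int(g_j\theta(g_j)^{-1})\circ\theta$ combined with $\theta(K^0)=K^0$; condition~(2) follows because $\phi$ is a quasicharacter of $G^0$, hence invariant under $\Int(k)$ for $k\in K^0\subset G^0$, so $\phi\circ(g_j\cdot\theta)=\phi\circ\theta=\phi^{-1}$ on $G^0$, and any $k\in K^0_+$ fixed by $g_j\cdot\theta$ satisfies $\phi(k)^2=1$, which forces $\phi(k)=1$ since $K^0_+$ is pro-$p$ with $p\neq 2$. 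The reverse direction requires, given $[\theta']\sim[\Psi]$ with $\theta'=g\cdot\theta\in\Theta$, adjusting $g$ on the left by $K^0$ and on the right by elements of $Z$ (which leaves $g\cdot\theta$ unchanged, since $Z$ fixes every element of $\Theta$) to place $h=g\theta(g)^{-1}$ into $K^0$.

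I expect this last step to be the main obstacle. By Lemma~\ref{lem:K^0}, one has $h\in N_G(K^0)=N_\bG(\bG^0)(F)\cap G_{[y]}$, so the image $\bar h$ lies in the finite quotient $N_\bG(\bT^0)(F)/\bG^0(F)$; the identity $h\theta(h)\in Z$ (equivalent to $\theta'$ being an involution) forces $\theta(\bar h)=\bar h^{-1}$ in this quotient. Completing the argument amounts to combining this involutive constraint with condition~(2) of the $\sim$ relation, namely $\phi|_{K^{0,\theta'}_+}=1$, in order to rule out any nontrivial $\bar h$. Part~(1), by contrast, is essentially a bookkeeping synthesis of Theorem~\ref{mKTheta}, Proposition~\ref{prop:refactorization}(i), and the definitions of the pairings involved.
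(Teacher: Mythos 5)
Your argument for part (1) is essentially the paper's: identify the contributing $K$-orbits via Propositions 5.7/5.20 of \cite{HM} and Proposition~\ref{prop:theta-symmetry}, use Proposition~\ref{prop:refactorization}(i) for uniqueness of the $K^0$-orbit inside each $K$-orbit, and match constants and pairings via Theorem~\ref{mKTheta} and the discussion preceding the theorem; that part is fine.

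Part (2), however, has a genuine gap, and it is exactly the step you flag as ``the main obstacle.'' What must be shown is that every $K^0$-orbit $[\theta']\subset\Theta$ with $[\theta']\sim[\Psi]$ is of the form $[g\cdot\theta]$ for some $g$ with $g\theta(g)^{-1}\in K^0$. Your plan is to write $\theta'=g\cdot\theta$ and then ``adjust $g$ on the left by $K^0$ and on the right by $Z$'' so as to force $h=g\theta(g)^{-1}$ into $K^0$; but these adjustments never change whether $h\in K^0$: replacing $g$ by $kgx$ with $k\in K^0$ and $x\in G_\theta$ replaces $h$ by $\mu(x)\,k\,h\,\theta(k)^{-1}$, and since $\mu(x)\in Z\subset K^0$ and $\theta(K^0)=K^0$, this lies in $K^0$ if and only if $h$ does. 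So the statement you need is that $h$ \emph{automatically} lies in $K^0$ whenever both $[\theta]\sim[\Psi]$ and $[g\cdot\theta]\sim[\Psi]$, and the constraint you propose to exploit --- $\theta(\bar h)=\bar h^{-1}$ in the finite quotient $N_\bG(\bT^0)(F)/\bG^0(F)$ together with $\phi|K^{0,\theta'}_+=1$ --- does not rule out nontrivial $\bar h$ (any class of order two satisfies the involutive condition), and you give no argument that it does. The paper closes this step by quoting Proposition 5.10(2) of \cite{HM}: since $K\cdot\theta'$ and $\xi$ are moderately compatible (Proposition 5.9 of \cite{HM} plus Lemma~\ref{lem:K^0}), there exists $g\in G$ with $g\theta(g)^{-1}\in K^0$ and $g\cdot\theta\in K\cdot\theta'$; then $[g\cdot\theta]\sim[\Psi]$, and Proposition~\ref{prop:refactorization}(i) upgrades membership in the same $K$-orbit to $[g\cdot\theta]=[\theta']$, so maximality of $g_1,\dots,g_m$ finishes the proof. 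Without this (or an equivalent substitute for the moderate-compatibility machinery of \cite{HM}), your part (2) is not a proof. A further, smaller point: in your ``easy direction'' you invoke $\phi\circ\theta=\phi^{-1}$ on $G^0$, which is not part of the definition of $[\theta]\sim[\Psi]$ (it is weak symmetry, available only after refactorization); this is repairable by first passing to a $\theta$-symmetric $\dot\Psi\in[\Psi]$ via Proposition~\ref{prop:theta-symmetry} and arguing with $\dot\phi$, using that condition (2) of the relation $\sim$ depends only on $[\Psi]$.
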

\begin{proof}
Let $\Theta'$ be a $K$-orbit of involutions of $G$ that is strongly compatible with the $K$-equivalence class $\xi$ of $\Psi$, i.e., which gives a nonzero contribution $\langle\Theta',\xi\rangle_K$ to the sum in (\ref{fixedeqn}).  Then $\Theta'$ and $\xi$ are moderately compatible by Proposition 5.20 of~\cite{HM}.  It follows from Proposition 5.9 of \emph{loc.\,cit.} that there exists $\theta\in\Theta'$ such that $[\theta]\sim[\Psi]$.  By part 1 of Proposition~\ref{prop:refactorization}, $[\theta]$ is the only $K^0$-orbit in $\Theta'$ with this property.
In addition, as discussed above, we have $\langle[\theta],[\Psi]\rangle_{K^0} = \langle\Theta',\xi\rangle_K$, and, by definition,
$m_{K^0}([\theta]) = m_K(\Theta')$.

To prove (1), it remains to show that every $K^0$-orbit $[\theta]\subset\Theta$ that gives a nonzero contribution to the right-hand side of the formula in 1 arises in the above way, i.e., is contained in some $K$-orbit $\Theta'$ of $G$-involutions that is strongly compatible with $\xi$.  For such a $K^0$-orbit $[\theta]$, we have $\langle[\theta],[\Psi]\rangle_{K^0}\neq 0$, so there must be a refactorization $\dot\Psi$ of $\Psi$ which is $\theta$-symmetric by Proposition~\ref{prop:theta-symmetry}.  As discussed above this implies that the $K$-orbit $\Theta'$ containing $\theta$ satisfies $\langle\Theta',\xi\rangle_K = \langle[\theta],[\Psi]\rangle_{K^0}\neq 0$.  This proves (1).

The first part of (2) follows directly from Theorem 5.20 in~\cite{HM} and Proposition~\ref{prop:theta-symmetry}.  Since the $[\theta_j]$ are distinct, the second part of (2) will follow from (1) provided that each refactorization class in $\Theta$ that contributes nontrivially to the sum in (1) contains one of the $\theta_j$.  Thus suppose that $\theta'\in\Theta$ satisfies $\langle[\theta'],[\Psi]\rangle_{K^0}\neq 0$.  Then $K\cdot\theta'$ and $\xi$ are moderately compatible by Proposition 5.9 of~\cite{HM} and Lemma~\ref{lem:K^0}.  It follows from Proposition 5.10 (2) that there exists $g\in G$ such that $g\theta(g)^{-1}\in K^0$ and $g\cdot \theta\in K\cdot \theta'$.  Then $[g\cdot\theta]\sim[\Psi]$.  Since $[\theta']\sim[\Psi]$, we must have $[g\cdot\theta] = [\theta']$ by Proposition~\ref{prop:refactorization}.  In other words, $\theta' = kg\cdot\theta$ for some $k\in K^0$.  If we let $h = kg$, then we have $h\theta(h)^{-1}\in K^0$.  By the maximality of $g_1,\ldots,g_m$, it follows that $[\theta'] = [h\cdot\theta]$ must contain some $\theta_j$.
\end{proof}

\subsection{Finite field theory}
\label{sub:finite}
In this section only:
\begin{itemize}
\item $\bG$ will be a connected reductive group defined over a finite field $\F_q$ of odd order $q$,
\item boldface letters will be used for $\F_q$-groups and the corresponding non-bold letters for the corresponding groups of $\F_q$-rational points.
\end{itemize}  
Fix a maximal torus $\bT$ of $\bG$ that is defined over $\F_q$ and a complex character $\lambda$ of $T$.  Let $R_\bT^\lambda = R^{\bG,\lambda}_{\bT}$ denote the  virtual representation of $G$ defined by Deligne-Lusztig \cite{DL} and let $R_{\bT,\lambda}= R_{\bT,\lambda}^\bG$ denote its virtual character.

Let $\theta$ be an involution of $G$, that is, an automorphism of $\bG$ of order 2 that is defined over $\F_q$.  Fix a   closed $\F_q$-subgroup $\bG_*^\theta$ of $\bG^\theta$ that contains the identity component of $\bG^\theta$.
If $g\in \bG$, we define the involution $g\cdot \theta$ in the usual way and we let $\bG^{g\cdot \theta}_* = g\bG^\theta_* g^{-1}$.

In \cite{L}, Lusztig gives a formula for the (virtual) dimension of the space of $G^\theta_*$-fixed points of $R_\bT^\lambda$.  We generalize this to a formula for the dimension of the space vectors in the space of $R_\bT^\lambda$ that transform under $G^\theta_*$ by a given (but arbitrary) character $\chi$ of $G^\theta_*$.

\subsubsection{A generalization of a formula of Lusztig}
\label{sec:genlusztig}

The results in this section were obtained independently by Fiona Murnaghan and appear in \cite{M2}.

If $\bH$ is an $\F_q$-group, as in \cite{L}, we define
$$\sigma (\bH) = (-1)^{\text{$\F_q$-rank of }\bH}.$$

Suppose $\bS$ is a maximal torus in $\bG$ that is defined over $\F_q$.  If $s\in \bS$, let $\bZ_s$ be the identity component of the centralizer of $s$ in $\bG$ and  let $\varepsilon_\bS: S\cap G^\theta_*\to \{ \pm 1\}$ by $$\varepsilon_\bS (s) = \sigma (Z_\bG ( (\bS\cap \bG^\theta_*)^\circ ))\  \sigma (
 Z_{\bZ_s} ( (\bS\cap \bG^\theta_*)^\circ )
) .$$  (We warn the reader that our notation $\bZ_s$ conflicts with the notations in \cite{L}.)

Let $\Xi_{\bT,\lambda,\chi}$ denote  the set of all $g\in G$ such that $(g\cdot \theta)(\bT) = \bT$ and
$$\lambda (t) = \chi (g^{-1}tg)^{-1} \varepsilon_{g^{-1}\bT g} (g^{-1}tg),$$ for all $t\in T\cap G^{g\cdot \theta}_* $.  The latter set is a union of double cosets in the space $T\bs G/G^\theta_*$.

\begin{theorem}\label{Lusztigwithchi}
If $\chi$ is a character of $G_*^\theta$ then
$$\frac{1}{|G^\theta_*|} \sum_{h\in G^\theta_*}  R_{\bT,\lambda} (h)\ \chi(h)
=\sigma (\bT) \sum_{g\in T\bs \Xi_{\bT,\lambda,\chi}/G^\theta_*} \sigma \left(Z_\bG\left( (g^{-1}\bT g \cap \bG^\theta_*)^\circ\right)\right).$$
\end{theorem}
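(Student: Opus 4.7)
The plan is to mimic Lusztig's proof of the $\chi = 1$ case in \cite{L}, inserting the character $\chi$ at the appropriate junctures. The starting tool is the Deligne--Lusztig character formula
$$
R_{\bT,\lambda}(h) = \frac{1}{|Z_\bG(h_s)^\circ(\F_q)|}\sum_{\substack{g\in G\\ g^{-1}h_s g\in T}} Q^{Z_\bG(h_s)^\circ}_{g^{-1}\bT g}(h_u)\,\lambda(g^{-1}h_s g),
$$
where $h = h_s h_u$ is the Jordan decomposition and $Q$ is the Deligne--Lusztig Green function. First I would substitute this into the left-hand side of the identity and interchange the order of summation, grouping the $h \in G^\theta_*$ by their semisimple parts $s := h_s$ and introducing the conjugating element $g$ satisfying $g^{-1}sg \in T$.

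The decisive point is that the factor $\chi(h)$ decouples cleanly from the unipotent part of the Jordan decomposition. Since $\chi$ is a one-dimensional character of the finite group $G^\theta_*$, and every unipotent element of $\bG^\theta_*$ lies in its identity component $(\bG^\theta_*)^\circ$ (indeed in the derived subgroup thereof), the standard argument for connected reductive groups over $\F_q$ shows that $\chi(h_u) = 1$ for unipotent $h_u$. Consequently $\chi(h) = \chi(h_s)$, and the character factor pulls past the integration over $h_u$. The inner sum over unipotents then matches the one appearing in Lusztig's argument verbatim; it is evaluated using the orthogonality relations and sign identities for Green functions proven in \cite{L}, contributing the factor $\sigma(Z_\bG((g^{-1}\bT g \cap \bG^\theta_*)^\circ))$ together with the sign character $\varepsilon_{g^{-1}\bT g}$ on $T \cap G^{g\cdot\theta}_*$.

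Once the unipotent contribution has been handled, what remains is a double sum over pairs $(s,g)$ of the form $\lambda(g^{-1}sg)\,\chi(s)\,\varepsilon_{g^{-1}\bT g}(g^{-1}sg)^{-1}$, with $s$ ranging over $T \cap G^{g\cdot\theta}_*$. For fixed $g$ the inner sum over $s$ equals $|T \cap G^{g\cdot\theta}_*|$ precisely when the character identity
$$\lambda(t) = \chi(g^{-1}tg)^{-1}\,\varepsilon_{g^{-1}\bT g}(g^{-1}tg)$$
holds for every $t \in T \cap G^{g\cdot\theta}_*$, and vanishes otherwise. This is exactly the condition defining $\Xi_{\bT,\lambda,\chi}$, and collecting the surviving $g$'s into double cosets in $T\bs G / G^\theta_*$ produces the right-hand side. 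The main obstacle will be the meticulous bookkeeping of signs and centralizer normalizations after the $\chi$-twist, together with confirming the triviality of $\chi$ on unipotents in the potentially disconnected group $\bG^\theta_*$; on the latter point one is rescued by the fact that unipotent elements necessarily lie in the identity component, where the usual character-theoretic argument applies.
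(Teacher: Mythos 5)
Your proposal follows essentially the same route as the paper's proof: Jordan decomposition, pulling $\chi$ through the unipotent part (the paper likewise argues $\chi(su)=\chi(s)$ since $u$ lies in the commutator subgroup of $G^\theta_*$), evaluating the unipotent sum via Lusztig's Theorem 3.4, and then recognizing the surviving semisimple character sum as the condition defining $\Xi_{\bT,\lambda,\chi}$ before counting double cosets $T\bs\Xi_{\bT,\lambda,\chi}/G^\theta_*$. The only detail you leave implicit, which the paper carries out, is the orbit--stabilizer computation $|TgG^\theta_*| = |G^\theta_*||T|/|T\cap G^{g\cdot\theta}_*|$ that cancels the normalizing factors; this is routine.
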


\begin{proof}
Our proof is a routine generalization of the proof of Theorem 3.3 \cite{L}, but, since the latter proof is rather complicated, we detail the argument.  

The first step is to apply the Jordan-Chevalley decomposition to obtain:
$$\sum_{h\in G^\theta_*}  R_{\bT,\lambda} (h)\ \chi(h)
= \sum_{\genfrac{}{}{0pt}{}{s\in G^\theta_*}{\rm semisimple}} \sum_{\genfrac{}{}{0pt}{}{u\in Z_s\cap G^\theta_*}{\rm unipotent}} R_{\bT,\lambda} (su)\ \chi(su).$$ 
Since $u$ is contained in the commutator subgroup of $G^\theta_*$, we have
$$\sum_{h\in G^\theta_*}  R_{\bT,\lambda} (h)\ \chi(h)
= \sum_{\genfrac{}{}{0pt}{}{s\in  G^\theta_*}{\rm semisimple}} \chi(s) \sum_{\genfrac{}{}{0pt}{}{u\in Z_s\cap G^\theta_*}{\rm unipotent}} R_{\bT,\lambda} (su) .$$
Next, we use the Deligne-Lusztig character formula \cite{DL}
$$R_{\bT,\lambda} (su) =\frac{1}{|Z_s|} \sum_{\genfrac{}{}{0pt}{}{x\in G}{x^{-1}sx\in T}} \lambda (x^{-1} sx)\  R^{\bZ_s}_{x\bT x^{-1},1} (u).$$
 (Implicit in the latter formula is the fact that $R_{\bT,\lambda}$ is supported in the set of elements of $G$ with semisimple part in a conjugate of $T$.)
 
First, observe that Theorem 3.4 \cite{L} implies:
$$\sum_{\genfrac{}{}{0pt}{}{u\in Z_s\cap G^\theta_*}{\rm unipotent}} R_{x\bT x^{-1},1}^{\bZ_s} (u)= \frac{\sigma(\bT)}{|T|} \sum_{\genfrac{}{}{0pt}{}{g\in Z_s}{(x^{-1}g\cdot \theta)(\bT) = \bT}}\sigma ((Z_{\bZ_s}(g^{-1}x\bT x^{-1}g\cap \bZ_s\cap \bG^\theta_*)^\circ)).$$
Note that $x^{-1}sx\in T$ implies that $xTx^{-1}\subset Z_s$ and hence
$$\sigma ((Z_{\bZ_s}(g^{-1}x\bT x^{-1}g\cap \bZ_s\cap \bG^\theta_*)^\circ))
=\sigma ((Z_{\bZ_s}(g^{-1}x\bT x^{-1}g\cap \bG^\theta_*)^\circ)).$$

Let $$S = \frac{1}{|G^\theta_*|}\sum_{h\in G^\theta_*}  R_{\bT,\lambda} (h)\ \chi(h).$$
Putting the above pieces together yields
$$S=\frac{\sigma(\bT)}{|G^\theta_*||T|} \sum_{(s,x,g)} \frac{\chi(s)}{|Z_s|}\ \lambda (x^{-1} sx)\  \sigma ((Z_{\bZ_s}(g^{-1}x\bT x^{-1}g \cap \bG^\theta_*)^\circ)),
$$ where $(s,x,g)$ is summed over the set 
$$\{ (s,x,g)\in G^\theta_*\times G\times G\ : x^{-1}sx\in T,\ g\in Z_s,\ (x^{-1}g\cdot \theta )(\bT) = \bT\}.$$

We now change variables by sending $(s,x,g)$ to $(t,x',g)$, where $t= x^{-1}sx$ and $x'= g^{-1}x$.  The latter triples lie in $T\times G\times G$ subject to certain additional conditions that we now describe.  First of all, since $s = xtx^{-1} = gx' tx'^{-1} g^{-1}$, we have $gx' tx'^{-1}g^{-1}\in G^\theta_*$.  The condition $g\in Z_s$ reduces to   $g\in Z_{x'tx'^{-1}}$.  Thus the condition 
$gx' tx'^{-1}g^{-1}\in G^\theta_*$
reduces to
$x' tx'^{-1}\in G^\theta_*$.  We also have $(x'^{-1}\cdot \theta )(\bT) = \bT$.

Therefore,
$$S =\frac{\sigma(\bT)}{|G^\theta_*||T|} \sum_{(t,x',g)} \frac{\chi(x'tx'^{-1})}{|Z_{x'tx'^{-1}}|}\ \lambda (t)\  \sigma ((Z_{\bZ_{x'tx'^{-1}}}(x' \bT x'^{-1}\cap \bG^\theta_*)^\circ)),
$$
with $(t,x',g)$ summed over 
$$\{ (t,x',g)\in T\times G\times G\ : x'tx'^{-1}\in G^\theta_*,\ g\in Z_{x'tx'^{-1}},\ (x'^{-1}\cdot \theta )(\bT) = \bT\}.$$
This is the same as 
$$S=\frac{\sigma(\bT)}{|G^\theta_*||T|} \sum_{
\substack{(t,x')\in T\times G\\
x'tx'^{-1}\in G^\theta_*\\
(x'^{-1}\cdot\theta)(\bT) = \bT}
}  \chi(x'tx'^{-1}) \ \lambda (t)\  \sigma ((Z_{\bZ_{x'tx'^{-1}}}(x' \bT x'^{-1}\cap \bG^\theta_*)^\circ)).
$$
By the definition of $\varepsilon_{x'\bT x'^{-1}}$, we have $$\varepsilon_{x'\bT x'^{-1}} (x'tx'^{-1}) = \sigma (Z_\bG ( (x'\bT x'^{-1}\cap \bG^\theta_*)^\circ ))\  \sigma (
 Z_{\bZ_{x'tx'^{-1}}} ( (x'\bT x'^{-1}\cap \bG^\theta_*)^\circ )
)$$ and thus
\begin{eqnarray*}
S=\frac{\sigma(\bT)}{|G^\theta_*||T|} \sum_{
\substack{(t,x')\in T\times G\\ 
x'tx'^{-1}\in G^\theta_*\\
(x'^{-1}\cdot\theta)(\bT) = \bT}
}  \chi(x'tx'^{-1}) \ \lambda (t)\  \sigma (Z_\bG ( (x'\bT x'^{-1}\cap \bG^\theta_*)^\circ ))
\\  \cdot\  \varepsilon_{x'\bT x'^{-1}} (x'tx'^{-1}).
\end{eqnarray*}

We now change variables by replacing $(t,x')$ by $(\bar t,g)$, where $\bar t = x' tx'^{-1}$ and $g= x'^{-1}$.  This yields
\begin{eqnarray*}
S=\frac{\sigma(\bT)}{|G^\theta_*||T|} \sum_{
\substack{g \in G\\ 
(g\cdot\theta)(\bT) = \bT}
}  \sigma (Z_\bG ( (g^{-1}\bT g\cap \bG^\theta_*)^\circ ))
\\  \cdot \sum_{\bar t\in g^{-1}Tg\cap G^\theta_*}\chi(\bar t) \ \lambda (g\bar t g^{-1})\    \varepsilon_{g^{-1}\bT g} (\bar t).
\end{eqnarray*}
The sum over $\bar t$ vanishes unless $g\in \Xi_{\bT,\lambda,\chi}$ in which case it equals $|T\cap G^{g\cdot \theta}_*|$.
Hence,
$$S=\frac{\sigma(\bT)}{|G^\theta_*||T|} 
\sum_{
g\in \Xi_{\bT,\lambda,\chi}
}  \sigma (Z_\bG ( (g^{-1}\bT g\cap \bG^\theta_*)^\circ ))\cdot |T\cap G^{g\cdot \theta}_*|.
$$

Note that the above summand is constant on double cosets in $T\backslash G/G^\theta_*$.  Now let $T\times G^\theta$ act on $G$ by $(t,h)\cdot x = txh^{-1}$.  Then $TgG^\theta_*$ is the orbit of $g$.  The map $(t,h)\mapsto t$ gives a bijection between the isotropy group of $g$ and $T\cap G^{g\cdot \theta}_*$.  Thus
$$|TgG^\theta_*| = \frac{|G^\theta_*||T|}{|T\cap G^{g\cdot \theta}_*|}.$$
Therefore, $$S= \sigma(\bT) 
\sum_{
g\in T\bs \Xi_{\bT,\lambda,\chi}/G^\theta_*
}  \sigma (Z_\bG ( (g^{-1}\bT g\cap \bG^\theta_*)^\circ ))
$$
and our claim is proven.
\end{proof}

\subsubsection{Reformulation}

Let $\Theta$ be the $G$-orbit of some fixed involution $\theta_0$ of $G$. 
Above, we have assumed that $\chi$ is an arbitrary character of $G^{\theta_0}_*$.  In this section, we further require that $\chi$ can be extended to a character of $G_{\theta_0}$.  Under this assumption, 
if $g\in G$, then 
$$t\mapsto \chi (g^{-1}tg)$$
defines a character of  $T\cap G^{g\cdot\theta_0}_*$ that depends only on the involution $g\cdot\theta_0$ and not on $g$ itself.
We denote this character by $\chi_{g\cdot \theta_0}$.
Similarly,
$$t\mapsto \varepsilon_{g^{-1}\bT g} (g^{-1}tg)$$
defines a character $\varepsilon_{\bT,g\cdot \theta_0}$ of $T\cap G^{g\cdot \theta_0}_*$ depending only on $g\cdot\theta_0$.

Let $$\Theta_{\bT,\lambda,\chi} = \{ \theta \in \Theta\ : \theta (\bT) =\bT,\ \lambda | (T\cap G^{\theta}_*)= \chi_{\theta}\cdot \varepsilon_{\bT,\theta}\}.$$
Then $gG_{\theta_0}\mapsto g\cdot\theta_0$ gives a bijection between $\Xi_{\bT,\lambda,\chi}/G_{\theta_0}$ and $\Theta_{\bT,\lambda,\chi}$.  (Recall that $G_{\theta_0}$ is the stabilizer of $\theta_0$ in $G$.)  It also gives a bijection between $T\bs \Xi_{\bT,\lambda,\chi}/G_{\theta_0}$ and the space of $T$-orbits in $\Theta_{\bT,\lambda ,\chi}$.

If $\theta\in \Theta$ then we let $[\theta]$ denote the $T$-orbit of $\theta$ and we take
$$m_T ([\theta]) = [ G_{\theta} : G^{\theta}_* (G_{\theta} \cap T) ].$$
We write $[\theta]\sim \lambda$ when $\theta (\bT) = \bT$ and $\lambda |((\cap G^\theta_*) = \chi_\theta\cdot\varepsilon_{\bT,\theta}$ or, in other words, $[\theta]\subset \Theta_{\bT,\lambda ,\chi}$.
Define
$$
\langle [\theta],\lambda\rangle^\chi_T = \begin{cases}\sigma (\bT)\ \sigma (Z_\bG ((\bT \cap \bG^{\theta}_*)^\circ )),&\text{if }[\theta]\sim  \lambda ,\\
0,&\text{otherwise.}
\end{cases}
$$
Let 
$$\langle \Theta ,\lambda\rangle_G^\chi = 
\frac{1}{|G^{\theta_0}_*|} \sum_{h\in G^{\theta_0}_*}  R_{\bT,\lambda} (h)\ \chi(h).$$

\begin{theorem}
$$\langle \Theta ,\lambda\rangle_G^\chi = \sum_{[\theta]\sim \lambda } m_T([\theta] )\ \langle [\theta],\lambda\rangle_T^\chi .$$
\end{theorem}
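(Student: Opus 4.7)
The plan is to derive the theorem directly from Theorem~\ref{Lusztigwithchi} by refining the set over which we sum. Theorem~\ref{Lusztigwithchi} expresses $\langle \Theta,\lambda\rangle_G^\chi$ as $\sigma(\bT)$ times a sum over $T\bs \Xi_{\bT,\lambda,\chi}/G^{\theta_0}_*$.  Since $G^{\theta_0}_*\subseteq G_{\theta_0}$, there is a natural surjection
$$T\bs \Xi_{\bT,\lambda,\chi}/G^{\theta_0}_* \longrightarrow T\bs \Xi_{\bT,\lambda,\chi}/G_{\theta_0},$$
whose target is, by the bijection noted in the text, the set of $T$-orbits $[\theta]$ in $\Theta_{\bT,\lambda,\chi}$.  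So I would regroup the sum by the fibers of this map, showing (i) the summand is constant along each fiber, and (ii) each fiber has exactly $m_T([\theta])$ elements.

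For (i), I would first observe that the definition $\bG^{h\cdot\theta_0}_*=h\bG^{\theta_0}_* h^{-1}$ forces $G_{\theta_0}$ to normalize $\bG^{\theta_0}_*$.  Hence for $h\in G_{\theta_0}$,
$$(gh)^{-1}\bT(gh)\cap \bG^{\theta_0}_* = h^{-1}\!\left(g^{-1}\bT g\cap \bG^{\theta_0}_*\right)\! h,$$
and since $h$ is $\F_q$-rational, conjugation by $h$ preserves $\F_q$-ranks and hence all $\sigma$-values.  Then, writing $\theta=g\cdot\theta_0$, conjugation by $g$ gives $g^{-1}\bT g\cap \bG^{\theta_0}_* = g^{-1}(\bT\cap \bG^\theta_*)g$, so
$$\sigma(Z_\bG((g^{-1}\bT g\cap \bG^{\theta_0}_*)^\circ)) = \sigma(Z_\bG((\bT\cap \bG^\theta_*)^\circ)),$$
which is exactly $\sigma(\bT)^{-1}\langle [\theta],\lambda\rangle^\chi_T$ when $[\theta]\sim\lambda$.

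For (ii), I would fix $g_0$ with $g_0\cdot\theta_0=\theta$ and use orbit-stabilizer for the transitive $G_{\theta_0}$-action on $T\bs Tg_0G_{\theta_0}$ to identify this set with $(G_{\theta_0}\cap g_0^{-1}Tg_0)\bs G_{\theta_0}$.  Further quotienting by $G^{\theta_0}_*$ on the right, and using that $G^{\theta_0}_*$ is normal in $G_{\theta_0}$, the fiber size becomes
$$\bigl[G_{\theta_0}:G^{\theta_0}_*(G_{\theta_0}\cap g_0^{-1}Tg_0)\bigr] = \bigl[G_\theta:G^\theta_*(G_\theta\cap T)\bigr] = m_T([\theta]),$$
after conjugating by $g_0$.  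Assembling (i) and (ii) turns the sum of Theorem~\ref{Lusztigwithchi} into $\sum_{[\theta]\sim\lambda} m_T([\theta])\,\langle [\theta],\lambda\rangle^\chi_T$, as required.

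The only subtlety worth flagging is the normality of $G^{\theta_0}_*$ in $G_{\theta_0}$, which is implicit in the consistency of the definition $\bG^{g\cdot\theta_0}_*=g\bG^{\theta_0}_* g^{-1}$ but which the counting argument in (ii) genuinely needs.  Beyond that the proof is an orbit-stabilizer bookkeeping exercise, and I do not anticipate any real obstacles.
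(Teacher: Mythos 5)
Your proposal is correct and follows essentially the same route as the paper: the paper also regroups the sum of Theorem~\ref{Lusztigwithchi} along the fibers of $T\bs \Xi_{\bT,\lambda,\chi}/G^{\theta_0}_* \to T\bs \Xi_{\bT,\lambda,\chi}/G_{\theta_0}$, using normality of $G^{\theta_0}_*$ in $G_{\theta_0}$ and an orbit--stabilizer count to show each fiber has $[G_{\theta_0}:G^{\theta_0}_*(G_{\theta_0}\cap g^{-1}Tg)]=m_T([g\cdot\theta_0])$ elements, together with the conjugation identity $\sigma(Z_\bG((g^{-1}\bT g\cap\bG^{\theta_0}_*)^\circ))=\sigma(Z_\bG((\bT\cap\bG^{g\cdot\theta_0}_*)^\circ))$. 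The only difference is cosmetic (the paper phrases the fiber count via the action of $G_{\theta_0}/G^{\theta_0}_*$ on $T\bs\Xi_{\bT,\lambda,\chi}/G^{\theta_0}_*$ rather than your stabilizer computation), so there is nothing to add.
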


\begin{proof}
Since the set $\Xi_{\bT,\lambda,\chi}$ may be expressed as
$$\{ g\in G\ : (g\cdot \theta_0)(\bT) = \bT,\ \lambda |(T\cap G^{g\cdot \theta_0}_*) = \chi_{g\cdot\theta_0}\cdot \varepsilon_{T,g\cdot\theta_0}\},$$
it follows that $\Xi_{\bT,\lambda,\chi}$ is a union of double cosets in $T\bs G/ G_{\theta_0}$.
If $g\in G^{\theta_0}$ then $gG^{\theta_0}_* g^{-1} = G^{g\cdot \theta_0}_* = G^{\theta_0}_*$ and thus $G^{\theta_0}_*$ is a normal subgroup of $G_{\theta_0}$.  Hence, we have an action of $G_{\theta_0}/G^{\theta_0}_*$ on $T\bs \Xi_{\bT,\lambda,\chi}/G^{\theta_0}_*$ by
$$h\cdot (T g G^{\theta_0}_*) = Tgh^{-1}G^{\theta_0}_*.$$
The isotropy group of $TgG^{\theta_0}_*$ is $(G_{\theta_0} \cap g^{-1}Tg)/(G^{\theta_0}_*\cap g^{-1}Tg)$.

We have a projection $$T\bs \Xi_{\bT,\lambda,\chi} /G^{\theta_0}_* \to T\bs \Xi_{\bT,\lambda, \chi} /G_{\theta_0} .$$  
The $G_{\theta_0}$-orbit of $TgG^{\theta_0}_*$ is the fiber of the double coset $TgG_{\theta_0}$.  The cardinality of this fiber is
$$[ G_{\theta_0} / G^{\theta_0}_* : (G_{\theta_0} \cap g^{-1}Tg)/(G^{\theta_0}_*\cap g^{-1}Tg) ]$$ or, equivalently,
$$[ G_{\theta_0} : G^{\theta_0}_* (G_{\theta_0} \cap g^{-1}Tg) ].$$
This is also the same as
$$[ G_{g\cdot \theta_0} : G^{g\cdot \theta_0}_* (G_{g\cdot \theta_0} \cap T) ].$$

We observe
$$
\sigma \left(Z_\bG\left( (g^{-1}\bT g \cap \bG^{\theta_0}_*)^\circ\right)\right)
=\sigma \left(Z_\bG\left( (\bT \cap \bG^{g\cdot\theta_0}_*)^\circ\right)\right),$$
and thus by Theorem~\ref{Lusztigwithchi},
\begin{eqnarray*}
\langle \Theta ,\lambda\rangle_G^\chi &=&\sigma (\bT) \sum_{g\in T\bs \Xi_{\bT,\lambda,\chi}/G^{\theta_0}_*} \sigma \left(Z_\bG\left( (\bT \cap \bG^{g\cdot\theta_0}_*)^\circ\right)\right)\\
&=&\sigma (\bT) \sum_{g\in T\bs \Xi_{\bT,\lambda,\chi}/G_{\theta_0}} 
[ G_{g\cdot \theta_0} : G^{g\cdot \theta_0}_* (G_{g\cdot \theta_0} \cap T) ]
\ \ \sigma \left(Z_\bG\left( (\bT \cap \bG^{g\cdot\theta_0}_*)^\circ\right)\right)\\
&=& \sum_{[\theta]\in \Theta^T_{\bT,\lambda,\chi}} m_T([\theta])\ \langle [\theta] ,\lambda\rangle_T^\chi .
\end{eqnarray*}
\end{proof}

\section{Parameters for tame supercuspidal representations of $\GL_n(F)$}
From now on, unless otherwise specified, we assume that $\bG$ is the group $\GL_n$.

\subsection{Howe data}

We recall some basic terminology and facts associated with 
Howe's construction \cite{rH} of tame supercuspidal
representations of $G = \GL_n (F)$,
and then we describe how the latter construction fits within Yu's framework of constructing tame supercuspidal representations for more general groups \cite{Y}.  A more detailed discussion of these matters is contained in \cite{HM}.

For the purposes of this paper, we find it convenient to introduce the notion of a ``Howe datum.''  This is a $\GL_n$ -variant of the notion of a cuspidal $G$-datum (in the sense of \cite{HM}).

\begin{definition}
If $E$ is a tamely ramified extension of $F$ of
degree $n$ and $\varphi$ is a quasicharacter of $E^\times$ then $\varphi$ is {\bf $F$-admissible}
 (or {\bf admissible over $F$}) if 
\begin{itemize}
\item there does not exist a proper subfield $L$ of
$E$ containing $F$ such that $\varphi$ factors through
the norm map $N_{E/L}:E^\times\rightarrow L^\times$;
\item if $L$ is a subfield of $E$ containing
$F$ and $\varphi\,|\, (1+\gP_E)$ factors through
$N_{E/L}$, then $E$ is unramified over $L$.
\end{itemize}
If $\varphi$ and $\varphi^\prime$ are $F$-admissible quasicharacters
of $E^\times$ and $E^{\prime\times}$, respectively,
then $\varphi$ and
$\varphi^\prime$ are  {\bf $F$-conjugate}
if there exists an $F$-isomorphism of $E$ with $E^\prime$
that takes $\varphi$ to $\varphi^\prime$.
\end{definition}

Howe's construction yields a bijection between the set of equivalence classes of tame supercuspidal representations of $G$ and the set of $F$-conjugacy classes of $F$-admissible quasicharacters associated to tamely ramified extensions of $F$ of degree $n$.

\begin{definition}
If $F'$ is a finite tamely ramified extension
of $F$
and $\varphi$ is a quasicharacter of $F^{\prime\times}$,
the {\bf conductoral exponent} $f(\varphi)$ of
$\varphi$ is the smallest positive integer such that
$\varphi\,|\, 1+\gP_{F^\prime}^{f(\varphi)}=1$.
\end{definition}

When 
 $F'$ is a finite tamely ramified extension of
$F$, we let $C_{F^{\prime}}$ denote the subgroup of
$F^{\prime\times}$ generated by   the roots of unity in $\gO_{F^\prime}^\times$
of order relatively prime to $p$ and by a prime element $\varpi_{F^\prime}$ in $F^\prime$
such  that $\varpi_{F^\prime}^e$ belongs
to $F$,
where $e$ is the ramification index of $F'$
over $F$.
If $\psi^\prime$ is
 a character of $F'$ that is trivial on $\gP_{F^\prime}$ 
and
nontrivial on $\gO_{F^\prime}$ and if $f(\varphi)>1$, then there exists a unique
$$
\gamma_\varphi\in C_{F^\prime}\cap
(\gP_{F^\prime}^{1-f(\varphi)}-\gP_{F^\prime}^{2-
f(\varphi)})
$$
such that $\varphi(1+t)=\psi^\prime(\gamma_\varphi t)$, 
$t\in \gP_{F^\prime}^{f(\varphi)-1}$.

\begin{definition}
Let $F'$ be a tamely ramified extension of $F$ and
let $\varphi$ be a quasicharacter of $F^{\prime\times}$.
If $f(\varphi)>1$, we say that
$\varphi$ is {\bf generic} over $F$ if $F[\gamma_\varphi]=F^\prime$.
If
 $f(\varphi)=1$, then we say that $\varphi$ is {\bf generic} over $F$ 
if $\varphi$ is $F$-admissible.
\end{definition}

We remark that if $f(\varphi)=1$ then $\varphi$
is generic over $F$ precisely when
$F'$ is unramified over $F$ and $\varphi$ is not fixed by
any nontrivial element of the Galois group $\gal(F' /F)$.
We also observe  that, in general, if $\varphi$ is generic over $F$ then it is necessarily admissible over $F$.

Let $E$ be a tamely ramified extension of $F$ of degree
$n$, and let $\varphi$ be an $F$-admissible quasicharacter
of $E^\times$.

\begin{definition}
 A  {\bf Howe factorization of $\varphi$}
consists of 
\begin{itemize}
\item a tower of fields
$F=E_d\subsetneq E_{d-1}\subsetneq\cdots\subsetneq E_0\subset E$,
with $d\ge 0$, 
\item
a collection of quasicharacters $\varphi_i$, $i=-1,
\dots, d$, 
\end{itemize}
with the following properties:
\begin{itemize}
\item For each $i\in \{\, 0,\dots,d\,\}$,
 $\varphi_i$
is a quasicharacter of $E_i^\times$ such that the conductoral
exponent $f_i=f(\varphi_i\circ N_{E/E_i})$ of $\varphi_i\circ N_{E/E_i}$
 is greater than $1$, and such that
$\varphi_i$ is generic over $E_{i+1}$
if $i\not=d$. 
\item  $f_0<f_1<\cdots <f_{d-1}$.
\item  If $\varphi_d$ is nontrivial,
then $f_d>f_{d-1}$.
\item {\rm ({\bf The toral case})} If $E_0=E$, then $\varphi_{-1}$ is the trivial
character of $E^\times$.
\item {\rm ({\bf The nontoral case})}  If $E_0\subsetneq E$, then 
$\varphi_{-1}$ is a quasicharacter
of $E^\times$ such that $f(\varphi_{-1})=1$
 and $\varphi_{-1}$ is generic over $E_0$.
\item
 $\varphi=\varphi_{-1}\prod_{i=0}^d\varphi_i\circ N_{E/E_i}$.
 \end{itemize}
\end{definition}

Note that $E/E_0$ is always unramified.

\begin{definition}\label{Howedatum} A  {\bf Howe datum (with respect to $G$)} consists of:
\begin{itemize}
\item a degree $n$ tamely ramified extension $E$ of $F$,
\item an $F$-admissible quasicharacter $\varphi : E^\times \to\C^\times$,
\item a Howe factorization of $\varphi$,
\item an $F$-linear embedding of $E$ in $M(n,F)$.
\end{itemize}
\end{definition}

The latter two ingredients affect the construction but not the equivalence class of the representation that is constructed.  If $\Phi$ is a Howe datum then we let $\pi (\Phi)$ denote the associated tame supercuspidal representation.

\subsection{Embeddings of $E^\times$ in $\GL_n(F)$}
\label{sec:embeddings}
One can associate an $F$-embedding $E\hookrightarrow M(n,F)$ to any $F$-basis $e_1,\dots ,e_n$ of $E$ as follows.
 When $x\in E$ let $$\v{x} =
\left( \begin{array}{c}
x_1\cr 
\vdots\cr
x_n
\end{array}\right),$$ where $x = x_1e_1+\cdots +
x_ne_n$ and $x_1,\dots ,x_n\in F$.  Thus
$x\mapsto \v{x}$ is the standard linear
isomorphism $E\cong F^n$ associated to our
choice of basis.
Multiplication by $x$ is an $F$-linear
transformation of $E$ and hence defines a
matrix $\m{x}\in \g$.  So $x\mapsto \m{x}$
is the regular representation associated
to our basis.  Given $x,x'\in E$, we have
the relations $\m{x}\ \v{x'} = \v{xx'}$ and
$\m{x}\ \m{x'} = \m{xx'}$.

The embedding $x\mapsto\m{x}$ for $x\in E^\times$ is the restriction of an embedding of
algebraic groups $R_{E/F}\GL_1\rightarrow \GL_n$ which we now describe.
Note that $R_{E/F}\GL_1$ is
isomorphic over $\overline F$ to a direct product $\prod_{\sigma\in\Sigma}\GL_1$ indexed
by the set $\Sigma$ of $F$-embeddings
of $E$ in $\overline F$.
Fix an ordering $\sigma_1, \ldots
,\sigma_n$ of the $n$ elements of $\Sigma$.  Let $\iota':
R_{E/F}\GL_1\rightarrow \GL_n$ be the corresponding embedding
$$(x_1,\ldots ,x_n)\mapsto{\rm diag}(x_1,\ldots , x_n).$$
Let
$\mu\in\GL_n(E)$ be the matrix 
$$(\mu_{ij}) = (\sigma_i (e_j)).$$
Let $\iota:R_{E/F}\GL_1\rightarrow \GL_n$ be the embedding
$$\iota = \Int(\mu)^{-1}\circ\iota'.$$
Then $\iota$ is defined over $F$ and
$$\iota(x) = \m{x}\qquad\mbox{for $x\in (R_{E/F}\GL_1)(F) =
  E^\times$.}$$
To see that this equality holds, observe that for $x\in E^\times$, the eigenvalues of $\m x$ are precisely the $\sigma(x_i)$, and the corresponding eigenvectors are the columns of $\mu^{-1}$.  Thus $\mu\m x\mu^{-1} = {\rm diag}(\sigma(x_1),\ldots ,\sigma(x_n))$.   But $(\sigma_1(x),\ldots ,\sigma_n(x))$ is precisely the element of $(R_{E/F}\GL_1)(F)$ that corresponds to $x\in E^\times$.

\begin{lemma}
\label{Fembed}
Given an $F$-embedding $\iota :E\to M(n,F)$, there exists an $F$-basis
  $e_1,\dots , e_n$ of $E$ such that $\iota$ is identical to the
  embedding associated as above to $e_1,\dots ,e_n$.  The same is true for any $F$-embedding $\iota
  :R_{E/F}\GL_1 \rightarrow \GL_n$.
\end{lemma}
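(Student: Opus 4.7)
The plan is to prove the first statement directly using a module-theoretic observation, and then reduce the second statement to the first by producing an $F$-algebra embedding of $E$ from the $F$-embedding of the torus.

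For the first part, the key point is that the $F$-algebra embedding $\iota: E\to M(n,F)$ endows $F^n$ with the structure of an $E$-module via $x\cdot v = \iota(x)v$. Since $\iota$ is injective this module is faithful, and since $\dim_F F^n = n = \dim_F E$ with $E$ a field, $F^n$ must be free of rank one over $E$. I would fix any nonzero generator $v\in F^n$ and define the $F$-linear isomorphism $\phi: E\to F^n$ by $\phi(x) = \iota(x)v$. Letting $u_1,\dots,u_n$ denote the standard basis of $F^n$, set $e_i := \phi^{-1}(u_i)$; these form an $F$-basis of $E$.

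Next I would verify that $\iota$ coincides with the embedding associated to $e_1,\dots,e_n$ in the sense of the paper. For $x=\sum x_i e_i\in E$, linearity of $\phi$ gives $\phi(x)=\sum x_iu_i = \v{x}$, the coordinate column. Hence for every $x,y\in E$,
$$\iota(x)\v{y} \;=\; \iota(x)\phi(y) \;=\; \phi(xy) \;=\; \v{xy} \;=\; \m{x}\v{y},$$
and since $\v{y}$ ranges over all of $F^n$ as $y$ ranges over $E$, this forces $\iota(x) = \m{x}$ for every $x$.

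For the second statement, the plan is to produce an $F$-algebra embedding $\tilde\iota: E\to M(n,F)$ whose restriction to $E^\times = (R_{E/F}\GL_1)(F)$ agrees on $F$-points with $\iota$, and then apply the first part. A clean way to do this is to observe that the image $\iota(R_{E/F}\GL_1)$ is necessarily a maximal $F$-torus of $\GL_n$ (as both have dimension $n$), and that maximal $F$-tori of $\GL_n$ correspond to $F$-algebra embeddings into $M(n,F)$ of $n$-dimensional commutative étale $F$-algebras; the étale algebra arising from our torus is isomorphic (as an $F$-algebra) to $E$, yielding $\tilde\iota$. The first part then supplies a basis $e_1,\dots,e_n$ of $E$ whose associated $F$-algebra embedding equals $\tilde\iota$; the associated morphism $R_{E/F}\GL_1\to \GL_n$ thus agrees with $\iota$ on $E^\times = (R_{E/F}\GL_1)(F)$, and since $F$ is infinite $E^\times$ is Zariski dense in $R_{E/F}\GL_1$, so the two $F$-morphisms coincide.

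The step I expect to be the main obstacle is the construction of $\tilde\iota$ in the second part, i.e.\ the passage from an $F$-embedding of the torus to an $F$-algebra embedding of the ambient étale algebra into $M(n,F)$. Once this extension is in hand, both the first part and the identification in the second part are essentially formal.
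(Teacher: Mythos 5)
Your proof of the first statement is correct, and it takes a genuinely different route from the paper: where the paper invokes the Sk\"olem--Noether theorem to conjugate $\iota$ into the regular representation attached to a fixed reference basis and then transports that basis by the conjugating matrix, you observe directly that $\iota$ makes $F^n$ into a one-dimensional $E$-vector space, choose a generator $v$, and read the basis off from the isomorphism $\phi(x)=\iota(x)v$. That argument is complete and more self-contained than the paper's, since it does not appeal to Sk\"olem--Noether at all.

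The second statement is where there is a genuine gap, and it sits exactly at the step you flag. Knowing that $\bT=\im\iota$ is a maximal $F$-torus of $\GL_n$ whose centralizer $A$ in $M(n,F)$ is an $n$-dimensional \'etale $F$-algebra, and even granting an isomorphism $A\cong E$ of $F$-algebras, only produces \emph{some} algebra embedding $\tilde\iota:E\to M(n,F)$ with $\tilde\iota(E^\times)=T$; it does not produce one agreeing with $\iota$ on $E^\times$, and that compatibility is the entire content of the statement. Concretely, the data you use (the image $\bT$ together with an unspecified isomorphism $E\cong A$) cannot distinguish $\iota$ from $\iota$ composed with inversion on the commutative torus $R_{E/F}\GL_1$: these two $F$-embeddings have the same image, hence the same centralizer algebra, yet they disagree on $E^\times$, so at most one of them can match the unit restriction of any fixed $\tilde\iota$. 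Thus the passage from the torus--\'etale-algebra correspondence to ``$\tilde\iota$ agrees with $\iota$ on $F$-points'' is not formal; any proof must use $\iota$ itself (for instance the eigenvalues of $\iota(x)$, equivalently the weights of $F^n$ as a $\bT$-module), not merely the isomorphism class of its image. (Even the identification $A\cong E$ requires an argument: the weights form a Galois-stable $\Z$-basis of $X^*(\bT)\cong\Z[\Sigma]$, and one must still match this Galois set with $\Sigma$.) The paper handles precisely this point by comparing $\iota$ with the embedding $\beta$ attached to a basis: it takes a regular $t\in T$, uses that $t$ and $t'=(\beta\circ\iota^{-1})(t)$ have the same eigenvalues because they correspond to the same element of $E^\times$, conjugates $t$ to $t'$ by some $g\in G$, deduces $\bT'=\Int(g)(\bT)$, and then uses that an automorphism of $\bT$ fixing a regular element is trivial to conclude $\iota=\Int(g^{-1})\circ\beta$. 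A repaired version of your argument would need an analogous step pinning down $\iota$ itself before the reduction to the first part can go through.
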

We note that in both parts of this lemma, the ordered frame $(Fe_1,\dots, Fe_n)$ is uniquely determined by $\iota$, while the unordered frame $\{ Fe_1,\dots, Fe_n\}$ is uniquely determined by the image of $\iota$.
 
 \begin{proof}
Fix an arbitrary $F$-basis $e'_1,\dots , e'_n$ of $E$.  Let $x\mapsto \m{x}$ be the embedding associated to this basis.  To prove the first statement., note that by the Sk\"olem-Noether Theorem, there exists $g\in G$ such that $g\iota (x)g^{-1} = \m{x}$, for all $x\in E$.  Let $e_j = \sum_i g_{ij} e'_i$.  It is routine to verify that $\iota$ is the embedding associated to $e_1,\dots , e_n$.

  To prove the second statement, let
  $\beta: R_{E/F}\GL_1\rightarrow \GL_n$ be the $F$-embedding associated to the basis $e'_1,\dots , e'_n$.
  Let $\bT = \im \iota$ and $\bT' = \im
  \beta$.  Let $t$ be a regular element of $T$, and let $t' =
  (\beta\circ\iota^{-1})(t)$.  Then $t$ and $t'$ have the same
  eigenvalues since they correspond to the same element of $E^\times$.
  Thus $t' = gtg^{-1}$ for some element $g\in G$.  Since $\bT$ and
  $\bT'$ are the respective centralizers of $t$ and $t'$ in $\bG$, it
  follows that $\bT' = {\rm Int}(g)(\bT)$.  Moreover, the automorphism
  ${\rm Int}(g^{-1})\circ\beta\circ \iota^{-1}$ of $\bT$ fixes the
  regular element $t$ and hence must be the identity map.  Thus $\iota
  = {\rm Int}(g^{-1})\circ \beta $, and since $\beta$ is associated to
  the basis $e'_1,\dots , e'_n$, it follows that $\iota$ is associated
  to another basis $e_1,\dots , e_n$, whose relationship to the original basis is given by the the transition matrix $g$.
\end{proof}

It follows from the preceding lemma that any $F$-embedding $\iota: R_{E/F}\GL_1\rightarrow \GL_n$ gives rise to a unique $F$-embedding $E\rightarrow M(n,F)$ that agrees with $\iota$ on $E^\times = (R_{E/F}\GL_1)(F)$.  Moreover, every such embedding $E\rightarrow M(n,F)$ arises in this way.  In the following, we will typically use the same symbol to denote both of these associated embeddings.

\subsection{Attaching a cuspidal $G$-datum
to a Howe datum}
\label{sec:attaching}

Fix a Howe datum $\Phi = (\varphi, E, \{ \varphi_i\}, \{ E_i\}, \iota : E\hookrightarrow M(n,F))$.  The purpose of this section is to associate a cuspidal $G$-datum $\Psi = (\vec\bG,y,\rho, \vec\phi)$ to $\Phi$.   

Recall from Section~\ref{sec:embeddings} that $\iota$ determines a unique $F$-embedding (which we also denote by $\iota$) of $R_{E/F}\GL_1$ into $\GL_n$.   Let $\bT$ be the image of $\iota$ in $\GL_n$.  Then $\bT$ is an elliptic maximal $F$-torus of $\GL_n$ and $T= \iota (E^\times )$.  

Given an element $x$ of $R_{E/F}\GL_1$ and an $F$-embedding
$\sigma\in\Sigma$, let $x_\sigma$ denote the $\sigma$-component of
$x$.  For $i\in \{ 0,\dots ,d\}$, the torus $R_{E_i/F}\GL_1$ embeds naturally in $R_{E/F}\GL_1$ as the
subgroup consisting of elements $x$ such that
$$x_\sigma = x_\tau\ \ \mbox{if}\ \ \sigma|_{E_i} = \tau|_{E_i}.$$
Let $\bZ^i$ be the image of $R_{E_i/F}\GL_1$ under $\iota$.  Then $\bZ^i$ is an $F$-subtorus of $\bT$
and $Z^i = \iota( E_i^\times)$.
Let $\bG^i$ be the centralizer of $\bZ^i$ in $\bG$.  
Then our desired tamely ramified twisted Levi sequence is  $\vec\bG = (\bG^0,\dots ,\bG^d)$.

\begin{lemma} For all $i\in \{ 0,\dots ,d\}$,  the group $\bG^i$ is $F$-isomorphic to the group $R_{E_i/F} \GL_{n_i}$, where $n_i=n[E_i:F]^{-1}$
and $R_{E_i/F}$ denotes restriction of scalars from $E_i$ to $F$.  
Thus $G^i \cong \GL_{n_i}(E_i)$.
\end{lemma}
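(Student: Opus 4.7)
The plan is to construct an explicit $F$-isomorphism between $R_{E_i/F}\GL_{n_i}$ and $\bG^i$ by realizing both as the unit group of a common $F$-algebra, thereby avoiding direct Galois descent manipulations.

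First I would observe that the embedding $\iota : E \hookrightarrow M(n,F)$ restricts to an $F$-algebra embedding $E_i \hookrightarrow M(n,F)$, making $V := F^n$ into a free $E_i$-module of rank $n_i = [E:E_i]$. The centralizer of $\iota(E_i)$ inside the $F$-algebra $M(n,F)$ is then precisely the $F$-subalgebra $A := \End_{E_i}(V)$. Since $\iota(E_i)$ lies in the center of $A$, the algebra $A$ carries a canonical $E_i$-algebra structure, and any choice of $E_i$-basis of $V$ gives an $E_i$-algebra isomorphism $A \cong M(n_i, E_i)$.

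Next, I would translate this into the algebraic-group setting via Weil restriction. For any $F$-algebra $R$, one has natural identifications
$$(A \otimes_F R)^\times = (M(n_i,E_i) \otimes_F R)^\times = \GL_{n_i}(E_i \otimes_F R),$$
and by definition this is the functor of points of $R_{E_i/F}\GL_{n_i}$. The inclusion $A \hookrightarrow M(n,F)$ therefore induces a closed immersion of $F$-groups
$$\Phi_i : R_{E_i/F}\GL_{n_i} \hookrightarrow \bG,$$
whose restriction to the $F$-subtorus $R_{E/F}\GL_1 = R_{E_i/F}(R_{E/E_i}\GL_1) \subset R_{E_i/F}\GL_{n_i}$ recovers $\iota$ (where the inner inclusion uses $E$ acting on itself by multiplication as $n_i$-dimensional $E_i$-vector space).

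Finally, I would verify that $\Phi_i$ has image $\bG^i = Z_\bG(\bZ^i)$. The image lies in $\bG^i$ because $A^\times$ commutes with $\iota(E_i^\times) = Z^i$ on $F$-points and $Z^i$ is Zariski dense in $\bZ^i$. To show the image exhausts $\bG^i$, I would pass to $\bar F$ and use the description from Section~\ref{sec:embeddings}: conjugation by $\mu$ identifies $\bT_{\bar F}$ with the standard diagonal torus of $\GL_n$ and sends $\bZ^i$ to the subtorus of tuples $(x_\sigma)_{\sigma \in \Sigma}$ with $x_\sigma = x_\tau$ whenever $\sigma|_{E_i} = \tau|_{E_i}$. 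Such a torus has exactly $[E_i:F]$ distinct eigenvalues on the standard representation, each with multiplicity $n_i$, so its centralizer in $\GL_n$ is a block-diagonal Levi subgroup $\bar F$-isomorphic to $\prod_{\tau} \GL_{n_i}$ (with $\tau$ ranging over $F$-embeddings of $E_i$ into $\bar F$), which matches the $\bar F$-form of $R_{E_i/F}\GL_{n_i}$; a dimension count then finishes the argument, and $G^i \cong \GL_{n_i}(E_i)$ follows by specializing to $R = F$. The main subtlety is ensuring the $F$-rational structure is respected throughout rather than only matching the groups over $\bar F$; the advantage of constructing $\Phi_i$ from the $F$-algebra $A$, rather than gluing $\bar F$-isomorphisms by Galois descent, is that Galois equivariance is automatic from the functor-of-points definition of Weil restriction.
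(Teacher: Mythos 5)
Your argument is correct, and it takes a cleaner route to the $F$-rationality than the paper does. The paper argues entirely over $\overline F$: it arranges the embeddings $\sigma_1,\ldots,\sigma_n$ in strings according to their restriction to $E_i$, observes that $\iota'(R_{E_i/F}\GL_1)$ then consists of diagonal matrices constant on strings, identifies the centralizer with the standard block Levi $\bM\cong\GL_{n_i}\times\cdots\times\GL_{n_i}$, conjugates by $\mu$ to get $\bG^i = \mu\bM\mu^{-1}$, and finally asserts that the $\gal(\overline F/F)$-action on $\bG^i$ is ``readily checked'' to be the one defining $R_{E_i/F}\GL_{n_i}$, i.e.\ the descent datum is verified (tersely) by hand. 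You share the same $\overline F$-computation of the centralizer (the block Levi, used only for surjectivity via a dimension count against the connected group $\bG^i$), but you replace the Galois-equivariance check by the intrinsic observation that the centralizer of $\iota(E_i)$ in $M(n,F)$ is the $F$-algebra $A=\End_{E_i}(F^n)\cong M(n_i,E_i)$, whose unit-group functor $R\mapsto (A\otimes_F R)^\times=\GL_{n_i}(E_i\otimes_F R)$ is by definition $R_{E_i/F}\GL_{n_i}$; note in passing that your closed-immersion claim needs the (easy) remark that an element of $A\otimes_F R$ invertible in $M(n,R)$ has its inverse in $A\otimes_F R$, since the inverse commutes with everything the element commutes with. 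The trade-off is as you say: the paper's version is shorter but leaves the descent verification implicit, while yours produces a canonical $F$-isomorphism with Galois-compatibility built into the functor-of-points definition of Weil restriction, and the dimension count is the only place $\overline F$ enters.
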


\begin{proof}
We may assume the $F$-embeddings $\sigma_1,\ldots ,\sigma_n$ of $E$ in $\overline
F$ are are arranged in $[E_i:F]$ consecutive strings of size $n_i$ such that the
embeddings in each string have the same restriction to $E_i$.  Then the
image of $R_{E_i/F}\GL_1$ under the map $\iota'$
defined above consists of diagonal matrices such that entries
corresponding to elements of a common string are equal.  The
centralizer of $\iota'(R_{E_i/F}\GL_1)$ in $\bG$ is therefore the standard
block-diagonal Levi subgroup $\bM = \GL_{n_i}\times\cdots\times\GL_{n_i}$.
By Lemma~\ref{Fembed}, $\iota$ is associated as above to an
$F$-basis of $e_1,\ldots ,e_n$ of $E$.  Thus, according to the above discussion, $\iota$
must equal $\Int(\mu)^{-1}\circ\iota'$,  where $\mu = (\mu_{ij}) =
(\sigma_i(e_j))$.  It follows that $\bZ_i$ has centralizer $\bG^i = \mu\bM\mu^{-1}$.  Thus, over $\overline
F$, 
$$\bG^i\cong\bM\cong R_{E^i/F}\GL_{n_i}.$$
Moreover, it is readily checked that the action of $\gal (\overline F/F)$
on $\bG^i$ is such that $\bG^i$ and $R_{E^i/F}\GL_{n_i}$ are
isomorphic over $F$.
\end{proof}

Given $i\in \{\, 0,\dots,d\,\}$, there is a homomorphism $\det_i :G^i\to E_i^\times$ that corresponds to the determinant on $\GL_{n_i}(E_i)$ and is independent of the choice of isomorphism $G^i\cong \GL_{n_i}(E_i)$.
We let
$$\phi_i=\varphi_i\circ\det\nolimits_i$$ and $\vec\phi = (\phi_0,\dots ,\phi_d)$.

Let $E'$ be a normal closure of $E/F$.  The space $$A(\bG,\bT,F)= A(\bG,\bT,E')^{\gal (E'/F)}$$  is  1-dimensional.  
The point $y$ in our datum $\Psi$ is chosen to be an arbitrary point in $A(\bG,\bT,F)$.  The corresponding point $[y]$ in the reduced building is uniquely determined by $\bT$.

In the toral case,  we let $\rho$ be the trivial representation
of $G^0=E^\times$.

Now suppose we  are in the   nontoral case. 
Let $q_0$ be the cardinality of the residue class field
of $E_0$. 
Then $G^0_{y,0}$ is conjugate to 
$\GL_{n_0}(\gO_{E_0})$ and $G_{y,0:0^+}^0
\cong \GL_{n_0}(\f_{E_0})$. 
The quasicharacter $\varphi_{-1}$ is
not fixed by any nontrivial element of $\gal(E/E_0)$, since $\varphi_{-1}$ is $E_0$-admissible and  $f(\varphi_{-1})=1$.
The restriction $\varphi_{-1}\,|\, \gO_E^\times$
factors to a character $\lambda$ of $\f_E^\times$
that is in general position in the sense that 
it is not fixed by any nontrivial element
of $\gal(\f_E/\f_{E_0})$.

The construction of Deligne and Lusztig yields a bijection  between the set of equivalence
classes of irreducible cuspidal representations
of $\GL_{n_0}(\f_{E_0})$ and
the $\gal(\f_E/\f_{E_0})$-orbits
of characters of $\f_E^\times$ that are in
general position.
In particular, the above character $\lambda$  determines
an equivalence class $R_\lambda$ of irreducible cuspidal
representations of $\GL_{n_0}(\f_{E_0})$.
Let $\rho^\circ$ be an irreducible representation
of $G_{y,0}^0$ whose restriction to $G_{y,0^+}^0$
is a multiple of the trivial representation
and assume that $\rho^\circ$ factors to an irreducible cuspidal representation of
$G_{y,0:0^+}^0$ belonging to $R_\lambda$.
Note that $$K^0 = G_{[y]}^0=\m{E_0^\times} G_{y,0}^0
\cong \langle \m{\varpi_{E_0}}\rangle \times
G_{y,0}^0,$$ for any choice of prime element
$\varpi_{E_0}$ in $E_0$. Let $\rho$ be the representation
of $K^0$ that restricts to $\rho^\circ$ on 
$G_{y,0}^0$, and such that $\rho(\m{\varpi_{E_0}})$
is equal to $\varphi_{-1}(\varpi_{E_0})$ times the
identity operator on the space of $\rho^\circ$.

  Let $r_i$ be the depth of $\phi_i$, 
for $i\in \{\,0,\dots,d-1\,\}$.   Then $$r_i=\frac{f_i-1}{e}\mbox{, where }f_i=f(\varphi_i\circ N_{E/E_i}),$$
where $e$ is the ramification degree of $E$ over
$F$. 

We have now fully constructed our desired cuspidal $G$-datum  $\Psi$. Note that we had some limited freedom in choosing $y$ and, when $E\not=E_0$, we could vary
the choice of $\rho$ so long as  $\rho\,|\, G_{y,0}^0$ factors to
an element of $R_\lambda$.

\section{Orthogonal involutions}

For a symmetric matrix $\nu\in G$, let $\theta_\nu$ be the $F$-involution of $\GL_n$ given by $x\mapsto \nu^{-1}\cdot {}^t x^{-1}\cdot\nu$.  Here $X\mapsto {}^t X$ denotes the usual transpose on $n\times n$ matrices.

We will refer to involutions of $G$ of the form $\theta_\nu$ as \emph{orthogonal involutions}.

\subsection{Restrictions of orthogonal involutions}

In this section, we prove that if if $\theta$ is an orthogonal involution of $G$ and if $\vec\bG = (\bG^0,\dots ,\bG^d)$ is a tamely ramified twisted Levi sequence associated to $G$ such that $\theta (\vec\bG) = \vec\bG$, then $\theta$ restricts to an orthogonal involution of each group $G^i$.  Implicit in this statement is that each $G^i$ is isomorphic to a general linear group, however, the choice of isomorphism $G^i \cong \GL(n_i ,E_i)$ is irrelevant for our result.

It is important to stress that a given element $g$ of some $G^i$ has a transpose with respect to $G = \GL(n,F)$ and another transpose with respect to $G^i \cong \GL(n_i,E_i)$ (once a specific isomorphism is chosen).  Therefore, if $\nu\in G$ is symmetric (as an element of $G$) and if $\nu$ lies in $G^i$ then it is not necessarily the case that $\theta_\nu$ restricts to an orthogonal involution of $G^i$.

Our assertion about restrictions of orthogonal involutions amounts to showing that an orthogonal involution of $G = G^d$ restricts to an orthogonal involution of $G^{d-1}\cong \GL(n_{d-1},E_{d-1})$, since once this is established one can apply the same result to $G^{d-1}$ and $G^{d-2}$, and so forth, until one deduces that $\theta$ restricts to an orthogonal involution of $G^0$.  For notational simplicity, we write $\bG'$ instead of $\bG^{d-1}$ in this section.

\begin{proposition}\label{orthinv} If $\theta$ is an orthogonal involution of $G$ such that $\theta (G') = G'$ then $\theta$ restricts to an orthogonal involution of $G'$.
\end{proposition}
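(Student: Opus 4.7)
The plan is to realize the restriction of $\theta = \theta_\nu$ to $\bG'$ as an orthogonal involution by extracting, from the symmetric $F$-bilinear form $B(u,v) = \t u \, \nu \, v$ on $V' := F^n$, a nondegenerate symmetric $E'$-bilinear form $B'$ on $V'$ viewed as an $n'$-dimensional $E'$-vector space. Here $E' := E_{d-1}$, and the $E'$-module structure on $V'$ comes from the embedding $\iota(E') \subset M(n,F)$ used in the construction of $\bG' = Z_\bG(\iota((E')^\times))$.

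First I would establish that $\theta$ acts on the central torus $\bZ' = R_{E'/F} \GL_1$ of $\bG'$ by inversion.  Since $\bG' = Z_\bG(\bZ')$, the hypothesis $\theta(\bG') = \bG'$ forces $\theta(\bZ') = \bZ'$. Over $\overline F$, $\bZ'$ is a product $\prod_{\sigma \in \Sigma'} \GL_1$ indexed by the set $\Sigma'$ of $F$-embeddings $E' \hookrightarrow \overline F$, and an $F$-involution of $\bZ'$ corresponds to a $\gal(\overline F / F)$-equivariant pair $(\pi, \epsilon)$ with $\pi$ a permutation of $\Sigma'$ of order at most two and $\epsilon \colon \Sigma' \to \{\pm 1\}$. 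Because $[E':F]$ divides the odd integer $n$, $|\Sigma'|$ is odd, and transitivity of $\gal(\overline F / F)$ on $\Sigma'$ then forces $\pi = \mathrm{id}$ and $\epsilon$ constant. The case $\epsilon \equiv +1$ would put $\bZ' \subset \bG^\theta = \On(\nu)$, making $B$ invariant under the diagonal $(E')^\times$-action on $V'$; an elementary argument (exploiting that $e \mapsto e^{-1}$ is not $F$-linear when $E' \neq F$) shows that no nondegenerate $F$-bilinear form on $V'$ can be $(E')^\times$-invariant, so $\epsilon \equiv -1$. In other words, $\theta(\m{e}) = \m{e^{-1}}$ for all $e \in (E')^\times$ (we may assume $E' \neq F$, as the case $E' = F$ gives $\bG' = \bG$ trivially).

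This identity unpacks to $\t{\m{e}} = \nu \cdot \m{e} \cdot \nu^{-1}$, which yields the balance relation $B(\m{e}\, u, v) = B(u, \m{e}\, v)$ for all $e \in E'$ and $u, v \in V'$. Using the nondegenerate trace pairing $T := T_{E'/F} \colon E' \times E' \to F$, I would then define $B' \colon V' \times V' \to E'$ by demanding $B(u, e v) = T(e \cdot B'(u,v))$ for all $e \in E'$ and $u, v \in V'$. The balance property translates directly into $E'$-linearity of $B'$ in both arguments; symmetry of $\nu$ yields symmetry of $B'$, and nondegeneracy of $B$ yields nondegeneracy of $B'$. Picking an $E'$-basis of $V'$ and taking $\nu'$ to be the matrix of $B'$ in this basis produces a symmetric element $\nu' \in \GL_{n'}(E') \cong G'$.

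It remains to show that $\theta|_{G'}$ equals the orthogonal involution $\theta_{\nu'}^{G'}(x) := (\nu')^{-1} \cdot {}^{t'} x^{-1} \cdot \nu'$ of $G'$, where ${}^{t'}$ denotes transpose on $M_{n'}(E')$. Both maps are $F$-involutions of the connected $F$-group $\bG'$, so it suffices to check that their derivatives agree on $\mathrm{Lie}(\bG') = \End_{E'}(V')$. This in turn reduces to verifying that the $B$-adjoint and the $B'$-adjoint of any $Y \in \End_{E'}(V')$ coincide, a direct computation from $B = T \circ B'$ and the $E'$-linearity of $Y$. I expect the main obstacle to be the first step above, specifically ruling out the trivial case for $\theta|_{\bZ'}$; the remaining steps amount to formal bookkeeping with the trace pairing.
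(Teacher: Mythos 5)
The one genuine gap is the structural claim at the start of your first step: that every $F$-involution of $\bZ'=R_{E'/F}\GL_1$ is given by a $\gal(\overline F/F)$-equivariant signed permutation $(\pi,\epsilon)$ of $\Sigma'$. Automorphisms of a torus of rank $m$ correspond to arbitrary elements of $\GL_m(\Z)$ acting on the character lattice, and order-two elements of $\GL_m(\Z)$ need not be signed permutation matrices; even after imposing equivariance for the transitive Galois action, your assertion becomes a statement about torsion units of the centralizer (Hecke) ring of $\Z[\Sigma']$ --- in the Galois case it is essentially Higman's theorem on torsion units of $\Z[\gal(E'/F)]$ --- so it is not the routine fact your write-up treats it as, and you offer no argument. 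Fortunately the conclusion you need is available much more cheaply, and without the detour that rules out $\epsilon\equiv +1$: since $\theta(Z')=Z'$, the $F$-linear anti-automorphism $x\mapsto \nu^{-1}\,{}^t x\,\nu$ of $M(n,F)$ preserves the $F$-span $\iota(E')$ of $Z'$, hence restricts to an $F$-algebra automorphism of the field $E'$ whose square is the identity (because $\nu$ is symmetric); as $[E':F]$ divides the odd integer $n$, this automorphism is trivial, so $\theta(\iota(e))=\iota(e)^{-1}$ for all $e\in (E')^\times$. This is precisely the argument of Lemma~\ref{thetastabisthetasplit}. With that substitution your first step is sound.

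The remaining steps are correct and give a genuinely different proof from the paper's. From ${}^t\iota(e)\,\nu=\nu\,\iota(e)$ you get the balance relation, the trace pairing lets you define $B'$ by $B(u,\iota(e)v)=\tr_{E'/F}(e\,B'(u,v))$, and $E'$-bilinearity, symmetry and nondegeneracy follow as you say; moreover, taking $e=1$ in the defining identity shows that the $B'$-adjoint of any $Y\in\End_{E'}(V')$ is also a $B$-adjoint, so by uniqueness of adjoints you get $\theta|_{G'}=\theta_{\nu'}$ directly on the group --- the passage to Lie algebras is not needed. The paper instead stays inside the matrix algebra: it shows that $X\mapsto \nu^{-1}\,{}^tX\,\nu$ preserves $M(n',E')$, composes with the transpose of $M(n',E')$ to obtain an automorphism, writes it as $\Int(\xi^{-1})$ by Skolem--Noether, and uses oddness of $n'=[E:E']$ together with $\det_{G'}$ to force the central element $\xi^{-1}\,{}^\tau\xi$ to equal $1$, i.e., $\xi$ symmetric. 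Note the different numerical inputs: your argument uses oddness of $[E':F]$ at the key point, whereas the paper's final step uses oddness of $[E:E']$; both hold here since $n$ is odd. A pleasant feature of your route is that it exhibits the Gram matrix $\nu'$ of the restricted involution explicitly as the transfer of $\nu$ along the trace form, rather than producing it abstractly from Skolem--Noether.
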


Fix an orthogonal involution $\theta$ of $G$ such that $\theta (G) = G$.
Fix a symmetric matrix $\nu$ in $G$ such that $\theta (g) = \nu^{-1}\cdot {}^tg^{-1}\cdot \nu$, for all $g\in G$.

We can (and do) fix an isomorphism $G'\cong \GL(n', E')$, where $E'$ is an intermediate field of $E/F$ and $n' = [E:E']$.  We observe that our proof of Proposition \ref{orthinv} uses the fact that $n'= n_{d-1} = [E:E_{d-1}]$ is odd, but otherwise it does not use our assumption that $n$ is odd.

Let $X\mapsto {}^\tau X$ be the transpose on $M(n',E')$.  The proposition we are considering asserts that there exists $\xi \in G'$ such that ${}^\tau \xi = \xi$ and $\theta (g) = \xi^{-1} \cdot {}^\tau g^{-1}\cdot \xi$, for all $g\in G'$.

\begin{lemma}
Under the assumptions of Proposition~\ref{orthinv}, $X\mapsto \nu^{-1}\cdot {}^t X\cdot \nu$ preserves $M(n',E')$.
\end{lemma}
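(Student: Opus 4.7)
The plan is to exploit the fact that $M(n',E')$, viewed as an $F$-subalgebra of $M(n,F)$, is spanned over $F$ by its unit group $G' = \GL(n',E')$. Any $F$-linear endomorphism of $M(n,F)$ stabilizing $G'$ will then automatically stabilize $M(n',E')$.

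First, I would recast the hypothesis in terms of the $F$-linear antiautomorphism $\sigma \colon M(n,F) \to M(n,F)$ defined by $\sigma(X) = \nu^{-1} \cdot {}^t X \cdot \nu$. A direct computation gives $\theta(g) = \sigma(g^{-1}) = \sigma(g)^{-1}$ for all $g \in G$, so the assumption $\theta(G') = G'$ translates (using that $G'$ is closed under inversion) into $\sigma(G') = G'$.

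Second, I would establish the spanning claim: $M(n',E') = \mathrm{span}_F(G')$ as subspaces of $M(n,F)$. The inclusion $\supseteq$ is immediate since $G' \subseteq M(n',E')$. For the reverse inclusion, given any $X \in M(n',E')$, the characteristic polynomial of $X$ (as an $E'$-linear endomorphism of an $n'$-dimensional $E'$-vector space) has only finitely many roots in $F$; since $F$ is an infinite field, there exists $t_0 \in F^\times$ such that $X + t_0 I$ is invertible in $M(n',E')$, that is, $X + t_0 I \in G'$. Then $X = (X + t_0 I) - t_0 I$ expresses $X$ as an $F$-linear combination of elements of $G'$, since $t_0 I \in G'$ as well.

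Finally, since $\sigma$ is $F$-linear and $\sigma(G') = G'$, it preserves $\mathrm{span}_F(G') = M(n',E')$, which is the desired conclusion. The argument involves no serious obstacle; the only care required is in tracking the embedding of $M(n',E')$ inside $M(n,F)$ and in invoking infinitude of $F$ to secure the scalar $t_0$.
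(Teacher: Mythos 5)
Your proof is correct and follows essentially the same route as the paper: both reduce the statement to the fact that $G'$ spans $M(n',E')$, combined with the linearity of the anti-automorphism $\sigma(X)=\nu^{-1}\cdot{}^tX\cdot\nu$ and the identity $\sigma(g)=\theta(g)^{-1}$, which converts $\theta(G')=G'$ into $\sigma(G')=G'$. The only difference is cosmetic: the paper simply asserts that one can choose a basis of $M(n',E')$ consisting of elements of $G'$, whereas you verify the spanning explicitly via the shift $X+t_0I$ with $t_0\in F^\times$ avoiding the finitely many eigenvalues, and your $F$-span formulation meshes cleanly with the $F$-linearity of $\sigma$.
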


\begin{proof}
Denote the anti-automorphism $X\mapsto \nu^{-1}\cdot {}^t X\cdot \nu$ of $M(n,F)$ by $\alpha$.  Then $\alpha (G') = G'$.  It is easy to choose an $E'$-basis of $M(n',E')$ consisting of elements of $G'$.  But $\alpha$ maps such a basis to another such such basis.  Therefore, $\alpha$ preserves $M(n',E')$.
\end{proof}

\noindent {\it Proof of Proposition \ref{orthinv}.} Define an $E'$-algebra automorphism of $M(n', E')$ by $$\beta (X) = \nu^{-1}\left( {}^t \left( {}^\tau X\right)\right) \nu.$$  By the Sk\"olem-Noether Theorem, there exists $\xi\in G'$ such that $\beta (X) = \xi^{-1} X\xi$, for all $X\in M(n',E')$.

Taking $X= {}^\tau g^{-1}$, we obtain $\theta (g)  = \nu^{-1} \left( {}^tg^{-1}\right) \nu = \xi^{-1} \left( {}^\tau g^{-1}\right) \xi$.
Therefore, $g = \theta (\theta (g)) = \xi^{-1}({}^\tau \xi)g ({}^\tau \xi^{-1})\xi$.  This says that the element $z= \xi^{-1}({}^\tau \xi)$  lies in the center $Z'$ of $G'$.
It now suffices to show that $z=1$.

We note that $\xi z = {}^\tau \xi$ and thus $z= ({}^\tau \xi) \xi^{-1}$.  Therefore, $z^{-1} = {}^\tau z^{-1} = \xi^{-1}\cdot {}^\tau \xi = z$.  Thus, $z= \pm 1$.  But we have (identifying $Z'$ with $(E')^\times$) $1 = \det_{G'} \left(\left( {}^\tau \xi\right) \xi^{-1}\right) = \det_{G'} (z) = z^{n'} =z$. {}
\strut\hfill$\square$


\subsection{$\theta$-split embeddings of $E^\times$}

In this section, we work in the following generality: $E/F$ is a finite separable extension of degree $n$ of arbitrary fields, where $n$ is an integer (possibly even) greater than 1.
 The separability assumption is required because we need to know that the trace $\tr_{E/F}$ is not identically zero and, in addition, we need $E/F$ to have primitive elements.

\subsubsection{Parametrization of $\theta$-split embeddings}

Fix an $F$-basis $e_1,\dots ,e_n$  of $E$.  We refer the reader to Section~\S\ref{sec:embeddings} for the notation $\v x$ and $\m x$ (for $x\in E^\times$) defined with respect to this basis.

Given $x,x'\in E$ and $a\in E^\times$, then
$$\langle x,x'\rangle_a = \tr_{E/F} (axx')$$
defines a symmetric $F$-bilinear form on $E$.
The matrix of this inner product is the symmetric matrix $\nu^a = (\nu^a_{ij})$ in $G$ defined by
 $$\nu^a_{ij} = \tr_{E/F}( ae_ie_j).$$
Thus $$\langle x,x'\rangle_a= \tr_{E/F} (axx') = {}^t \v{x}\cdot \nu^a\cdot \v{x'},$$ for all $x,x'\in E$.

\begin{lemma} The inner product $\langle \ ,\ \rangle_a$ is nondegenerate or, equivalently, $\nu^a$ is invertible.
\end{lemma}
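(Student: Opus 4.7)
The plan is to reduce the nondegeneracy of $\langle\ ,\ \rangle_a$ to the standard fact that the trace form of a separable extension is nondegenerate. Concretely, I would first recall (or briefly justify) that for the separable extension $E/F$, the symmetric $F$-bilinear form $(y,y')\mapsto \tr_{E/F}(yy')$ on $E$ is nondegenerate. The standard argument is: if $y\in E$ is nonzero, then since $\tr_{E/F}:E\to F$ is a nonzero $F$-linear functional (separability), we may pick $z\in E$ with $\tr_{E/F}(z)\ne 0$, and then $x' = z/y\in E$ satisfies $\tr_{E/F}(yx')=\tr_{E/F}(z)\ne 0$.

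Next, I would use the fact that $a\in E^\times$ is a unit, so multiplication by $a$ is an $F$-linear automorphism of $E$. Hence for fixed $x\in E$, the condition ``$\tr_{E/F}(axx')=0$ for all $x'\in E$'' is equivalent to ``$\tr_{E/F}(yx')=0$ for all $x'\in E$'' with $y=ax$. By the nondegeneracy of the trace form this forces $y=0$, hence $x=0$. This shows the radical of $\langle\ ,\ \rangle_a$ is trivial, giving nondegeneracy.

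For the equivalence with invertibility of $\nu^a$: nondegeneracy of a bilinear form on a finite-dimensional vector space is exactly the statement that its Gram matrix in any basis is invertible, and $\nu^a$ is by definition the Gram matrix of $\langle\ ,\ \rangle_a$ in the basis $e_1,\dots,e_n$ (as recorded by the identity $\langle x,x'\rangle_a = {}^t\v{x}\cdot\nu^a\cdot\v{x'}$ stated just above the lemma).

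There is no real obstacle here: the only substantive input is the nondegeneracy of the trace form, which is standard for separable extensions and is precisely why the separability hypothesis was imposed at the start of the subsection. The proof is essentially one line once this ingredient is invoked.
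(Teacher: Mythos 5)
Your proof is correct and follows essentially the same route as the paper: both arguments come down to the fact that separability makes $\tr_{E/F}$ a nonzero functional, combined with invertibility of multiplication by $a$ (the paper phrases this contrapositively via a kernel vector $c$ and the basis $\{ace_1,\dots,ace_n\}$, while you kill the radical directly by dividing by $ax$). The identification of $\nu^a$ as the Gram matrix, giving the equivalence with invertibility, is likewise as in the paper.
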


\begin{proof}Assume $\nu^a$ is not invertible.  Then $0$ is an eigenvalue.  Choose $c \in E^\times$ so that $\v{c}$ is an associated eigenvector.
Then $$\sum_{j=1}^n c_j \tr_{E/F}(ae_ie_j) =0$$ for all $i$.  Equivalently, $$\tr_{E/F} (ace_i) =0$$ for all $i$.  But $\{ ace_1,\dots , ace_n\}$ is an $F$-basis of $E$.  Therefore, we deduce that $\tr_{E/F}$ is identically zero.  This contradicts the assumption that $E/F$ is separable.
\end{proof}

We now consider the mapping  $$E^\times \to \{\mbox{nondegenerate symmetric $F$-bilinear forms on $E$}\}$$ given by $a\mapsto \langle \ ,\ \rangle_a$, as well as variations on this mapping.
If an $F$-basis of $E$ has been fixed then $\langle\ ,\  \rangle_a$ determines a symmetric matrix $\nu^a$ in $G$.  
So we have a map $$E^\times \to \cS := \{\mbox{symmetric matrices in $G$}\}.$$

There is a natural action of $G$ on $\cS$: for $g\in G$ and $\nu\in\cS$, define $g\cdot\nu = g\nu\, {}^t\! g$.  We will say that two elements of $\cS$ in the same $G$-orbit are \textit{similar}.

Changing the basis chosen above has the effect of replacing $\nu^a$ by another matrix that is similar to $\nu^a$, so we obtain a 
$$E^\times \to \cO_G(\cS) := \{\mbox{$G$-orbits in $\cS$}\}.$$

If $a,b\in E^\times$ then 
$$\nu^{ab} = {}^t\m{b}\ \nu^a = \nu^a\ \m{b},$$ from which it follows 
that $$\nu^{ab^2} = {}^t\m{b}\ \nu^a \ \m{b}$$ and thus $\nu^{ab^2}$ is similar to $\nu^a$.  Therefore, our map $E^\times\to \cO_G(\cS)$ gives rise to a canonical map
$$X_E  \to \cO_G (\cS),$$ where
$$X_E := E^\times /(E^\times)^2.$$

Each $\nu\in \cS$ determines an involution $\theta_\nu$ of $G$ by
$$\theta_\nu (g) = \nu^{-1}\cdot {}^t g^{-1}\cdot \nu.$$    For simplicity, we write $\theta^a$ instead of $\theta_{\nu^a}$.  

Let $\bT$ be a torus in $\bG$.  For simplicity, we will often refer to the group $T = \bT(F)$ as a \textit{torus} in $G$.  If $\theta$ is an involution of $G$, such a torus $T$ is said to be {\it $\theta$-split} if all of its elements $g$ satisfy $\theta (g) = g^{-1}$.  Since $T$ is dense in $\bT$ with respect to the Zariski topology, this is equivalent to the condition $\theta (g) = g^{-1}$ for all $g\in\bT$, and we will also say that that $\bT$ is \emph{$\theta$-split} in this case.

Now fix $T= \m{E^\times}$.  Then $T$ is $\theta^a$-split, according to the calculation:
$$\theta^a (\m{x}^{-1}) = (\nu^a)^{-1}\cdot {}^t\m{x}\cdot \nu^a = (\nu^a)^{-1}\cdot \nu^a\cdot \m{x} = \m{x}.$$

\begin{lemma}
\label{ecrossmodfcross}
The map $a\mapsto \theta^a$ gives a bijection between $E^\times/F^\times$ and the set of orthogonal involutions $\theta$ of $G$ for which $T= \m{E^\times}$ is $\theta$-split.
\end{lemma}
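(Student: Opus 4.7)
\medskip

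The plan is to show bijectivity directly. The starting observation is that the defining formula $\theta_\nu(g)=\nu^{-1}\cdot{}^tg^{-1}\cdot\nu$ gives $\theta_{\nu_1}=\theta_{\nu_2}$ iff $\nu_2\nu_1^{-1}$ is central, so orthogonal involutions are in bijection with symmetric matrices modulo $F^\times$. Hence the question reduces to: as $a$ ranges over $E^\times/F^\times$, does $a\mapsto \nu^a$ parametrize (up to scalars in $F^\times$) exactly the symmetric matrices $\nu$ for which $T=\m{E^\times}$ is $\theta_\nu$-split?

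First I would translate the split condition. A direct computation shows that $\theta_\nu(\m x)=\m{x}^{-1}$ for all $x\in E^\times$ is equivalent to $\nu\,\m x = {}^t\m x\,\nu$ for all $x\in E$; i.e., the symmetric $F$-bilinear form $B_\nu(\v x,\v y)={}^t\v x\,\nu\,\v y$ on $E$ makes multiplication by every element of $E$ self-adjoint. That the trace form $\langle x,y\rangle_a=\tr_{E/F}(axy)$ satisfies this is immediate from commutativity of $E$, so each $\nu^a$ lies in the target set. Since the trace is $F$-linear, $\nu^{ca}=c\nu^a$ for $c\in F^\times$, so $\theta^{ca}=\theta^a$; this shows well-definedness of the map from $E^\times/F^\times$.

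For injectivity, suppose $\theta^a=\theta^b$. Then $\nu^a=c\nu^b=\nu^{cb}$ for some $c\in F^\times$, so $\tr_{E/F}\bigl((a-cb)e_ie_j\bigr)=0$ for all $i,j$. Fixing $i$, the elements $e_ie_1,\dots,e_ie_n$ form an $F$-basis of $E$ (multiplication by $e_i$ is an $F$-linear bijection), so nondegeneracy of $\tr_{E/F}$ forces $(a-cb)e_i=0$, whence $a=cb$ and thus $aF^\times=bF^\times$.

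For surjectivity, fix $\theta_\nu$ making $T$ split. Using $B_\nu(\v{cx},\v y)=B_\nu(\v x,\v{cy})$ for $c\in E$ together with commutativity of $E$, I get $B_\nu(\v x,\v y)=B_\nu(\v 1,\v{xy})$. The functional $\phi:E\to F$, $\phi(z)=B_\nu(\v 1,\v z)$, is $F$-linear, and since the trace pairing $E\times E\to F$ is nondegenerate, there is a unique $a\in E$ with $\phi(z)=\tr_{E/F}(az)$. This gives $B_\nu=B_{\nu^a}$, hence $\nu=\nu^a$. Invertibility of $\nu$ forces $a\neq 0$, so $\theta_\nu=\theta^a$ lies in the image. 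The main point to nail down carefully is the last step: moving from the matrix/vector picture to the intrinsic bilinear form on $E$ and back, which is purely bookkeeping but requires being precise about the identification $E\cong F^n$ via the chosen basis.
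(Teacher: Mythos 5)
Your proposal is correct and follows essentially the same route as the paper: injectivity via nondegeneracy of the trace form (separability), and surjectivity by translating the split condition into $\nu\,\m x={}^t\m x\,\nu$, deducing $\langle x,y\rangle_\nu=\langle 1,xy\rangle_\nu$, and realizing the functional $z\mapsto\langle 1,z\rangle_\nu$ as $z\mapsto\tr_{E/F}(az)$ for a unique $a$, forcing $\nu=\nu^a$. The only cosmetic difference is that you make explicit at the outset the reduction ``involutions $\theta_\nu$ correspond to symmetric $\nu$ modulo $F^\times$,'' which the paper uses implicitly.
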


\begin{proof}
We first consider injectivity.  Suppose $a_1,a_2\in E^\times$.  Then the condition $\theta^{a_1} = \theta^{a_2}$ is equivalent to the condition that $\nu^{a_1}$ and $\nu^{a_2}$ (or the associated inner products) are scalar multiples of each other.  It is easy to see that this is equivalent to the existence of $z\in F^\times$ such that $\tr_{E/F} ((a_1-za_2)x)=0$ for all $x\in E$.  Separability of $E/F$ then says that this is equivalent to $a_1 = za_2$, which proves injectivity.

We now consider surjectivity.  Suppose $\theta$ is an orthogonal involution  such that $T$ is $\theta$-split.  Choose a symmetric matrix $\nu\in G$ such that $\theta (g) = \nu^{-1}\cdot {}^t g^{-1}\cdot \nu$, for all $g\in G$.  (Up to scalar multiples, $\nu$ is uniquely determined by $\theta$.)  
We need to show that there exists $a\in E^\times$ such that $\nu = \nu^a$.

For $x,y\in E$, define $\langle x,y\rangle_\nu = {}^t\v{x}\cdot \nu\cdot \v{y}$ and let $\phi_\nu\in \Hom_F(E,F)$ be defined by $\phi_\nu (z) = \langle 1,z\rangle_\nu$.
We now observe that every nonzero element of $\Hom_F (E,F)$ is associated to an element of $E^\times$ in the following manner.  Define an $F$-linear map $E\to \Hom_F(E,F)$ by mapping $a\in E$ to $\tr_{E/F}\circ \mu_a$, where $\mu_a :E\to E$ is given by $\mu_a(x) = ax$.  This map is clearly injective.  Therefore, it defines an $F$-linear isomorphism $E\cong \Hom_F(E,F)$ since $E$ and $\Hom_F(E,F)$ both have $F$-dimension $[E:F]$.  This implies that there exists $a\in E^\times$ such that 
$\phi_\nu (x) = \tr_{E/F}(ax)$, for all $x\in E$.

Now suppose $x,y\in E$.  Note that $\theta (\m{x}^{-1}) = \m{x}$, from which it follows that $\nu\cdot  \m{x} =  {}^t\m{x}\cdot \nu$.  Therefore, we have $\langle x,y\rangle_\nu = {}^t\v{x}\cdot \nu\cdot \v{y} = {}^t\v{1}\cdot {}^t\m{x}\cdot\nu\cdot \v{y}= {}^t\v{1}\cdot \nu\cdot \m{x}\v{y} = \langle 1,xy\rangle_\nu = \phi_\nu (xy)=\tr_{E/F} (axy) = \langle x,y\rangle_a$.
We now deduce that $\nu = \nu^a$ which completes the proof.
\end{proof}

We now observe that
$$\theta^{ab^{-2}} = \m{b}\cdot \theta^a$$ and interpret this as an equivariance property of the map $a\mapsto \theta^a$.  More precisely, let $E^\times$ act on $E^\times/F^\times$ by $b\cdot (aF^\times) = ab^{-2}F^\times$, and let $T$ act on the set of involutions of $G$ by restricting the usual action of $G$ on involutions.  Then the mapping $a\mapsto \theta^a$ becomes equivariant with respect to $T = E^\times$, where we identify $E^\times$ with $T$ via $x\mapsto \m{x}$.   This yields:
 
 \begin{corollary}\label{corcorr}
Let $T= \m{E^\times}$.  The map $a\mapsto \theta^a$ gives a bijection from $E^\times/ (E^\times)^2F^\times$
to the set of $T$-orbits of orthogonal involutions $\theta$ of $G$ for which $T$ is $\theta$-split.
\end{corollary}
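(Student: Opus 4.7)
The plan is to deduce the corollary from Lemma~\ref{ecrossmodfcross} by passing to $T$-orbits on both sides, using the equivariance formula $\theta^{ab^{-2}}=\m{b}\cdot\theta^a$ asserted just before the statement of the corollary.

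First I would verify this equivariance formula. From the identity $\nu^{ab^2}={}^t\m{b}\cdot\nu^a\cdot\m{b}$ recorded earlier (which itself follows from $\nu^a_{ij}=\tr_{E/F}(ae_ie_j)$ together with the fact that $\m{b}$ represents multiplication by $b$), replacing $b$ by $b^{-1}$ gives $\nu^{ab^{-2}}={}^t\m{b}^{-1}\cdot\nu^a\cdot\m{b}^{-1}$. Substituting into the definition of $\theta^{ab^{-2}}$ and simplifying via ${}^t(XY)={}^tY\cdot{}^tX$ yields
\[
\theta^{ab^{-2}}(g)
 = \m{b}\cdot(\nu^a)^{-1}\cdot{}^t(\m{b}^{-1}g^{-1}\m{b})\cdot\nu^a\cdot\m{b}^{-1}
 = \m{b}\cdot\theta^a(\m{b}^{-1}g\m{b})\cdot\m{b}^{-1},
\]
which is $(\m{b}\cdot\theta^a)(g)$ by the definition of the $T$-action on involutions.

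Second, I would equip $E^\times/F^\times$ with the $T$-action $\m{b}\cdot aF^\times=ab^{-2}F^\times$ and equip the set of orthogonal involutions for which $T$ is $\theta$-split with the restriction of the conjugation action of $G$. The equivariance formula above says precisely that the bijection $aF^\times\mapsto\theta^a$ of Lemma~\ref{ecrossmodfcross} intertwines these two $T$-actions. Passing to orbits therefore produces a bijection between $(E^\times/F^\times)/T$ and the set of $T$-orbits of orthogonal involutions $\theta$ for which $T$ is $\theta$-split.

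Finally, I would identify the left-hand quotient. The $T$-orbit of $aF^\times$ is $\{ab^{-2}F^\times:b\in E^\times\}=a(E^\times)^2F^\times/F^\times$, so the orbit space is canonically $E^\times/((E^\times)^2F^\times)$, giving the bijection claimed. The argument is essentially routine given Lemma~\ref{ecrossmodfcross}; the only computation of substance is the equivariance check in the first step, and no genuine obstacle arises.
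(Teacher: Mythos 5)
Your proposal is correct and follows essentially the same route as the paper: the paper also deduces the corollary from Lemma~\ref{ecrossmodfcross} by observing the equivariance $\theta^{ab^{-2}}=\m{b}\cdot\theta^a$ for the action $b\cdot(aF^\times)=ab^{-2}F^\times$ and passing to $T$-orbits, with the orbit space on the source identified as $E^\times/(E^\times)^2F^\times$. Your only addition is writing out the equivariance verification explicitly, which the paper merely asserts.
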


\subsubsection{$Y_{E/F}$}\label{sec:yEFsec}

We have just shown that $a\mapsto \theta^a$ gives a bijection 
 $$\mu_{E/F} : Y_{E/F} \to \cO^T,$$ where
$Y_{E/F} = E^\times /(E^\times)^2F^\times$ and 
$\cO^T$
is the set of $T$-orbits of orthogonal involutions $\theta$  such that $T$ is $\theta$-split.    Now let $y_{E/F}$ denote the cardinality of $Y_{E/F}$.

\begin{lemma}\label{yEF}
 $y_{E/F}-1$ is the number  of quadratic extensions of $F$ contained in $E$.  In particular, $y_{E/F} =1$ if $n$ is odd and $y_{E/F}=2$ if $n=2$.
\end{lemma}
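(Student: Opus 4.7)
The plan is to compute $y_{E/F} = |Y_{E/F}|$ by placing $Y_{E/F}$ in the four-term exact sequence
\begin{equation*}
1 \to \frac{F^\times \cap (E^\times)^2}{(F^\times)^2} \to \frac{F^\times}{(F^\times)^2} \xrightarrow{\phi} \frac{E^\times}{(E^\times)^2} \to Y_{E/F} \to 1
\end{equation*}
induced by the inclusion $F^\times \hookrightarrow E^\times$.  Once the orders of the two middle groups and of $\ker \phi$ are in hand, the identity $y_{E/F} = |E^\times/(E^\times)^2| \cdot |\ker \phi|/|F^\times/(F^\times)^2|$ falls out immediately.

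First I would compute the two 2-torsion quotients.  Since $F$ is nonarchimedean local with residual characteristic $p \neq 2$ (as fixed in the introduction), and $E$ is a finite extension, both fields have the same structure.  The decomposition $K^\times \cong \langle \varpi_K\rangle \times \mu_K \times (1 + \gP_K)$ (for $K = F$ or $E$), together with the 2-divisibility of the pro-$p$ group $1 + \gP_K$, reduces $K^\times/(K^\times)^2$ to $(\Z/2\Z) \times (\mu_K/\mu_K^2)$, which has order $4$ since $\mu_K$ is cyclic of even order $q_K - 1$.  Thus $|F^\times/(F^\times)^2| = |E^\times/(E^\times)^2| = 4$.

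Next I would identify $\ker \phi$ with the set of classes $cF^{\times 2}$ such that $c$ becomes a square in $E$.  By Kummer theory (using $\operatorname{char} F \ne 2$), a nontrivial such class corresponds bijectively to the quadratic subfield $F(\sqrt c) \subset E$, and distinct nontrivial classes yield distinct subfields.  Hence $|\ker \phi| = k + 1$, where $k$ is the number of quadratic extensions of $F$ contained in $E$, and substituting gives $y_{E/F} = 4 \cdot (k+1)/4 = k+1$, the main assertion.

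For the explicit specializations: if $n$ is odd, then no quadratic subfield can exist since $2$ would have to divide $n$, so $k = 0$ and $y_{E/F} = 1$; if $n = 2$, then $E$ itself is the unique quadratic subextension, giving $k = 1$ and $y_{E/F} = 2$.  The most delicate step will be the Kummer-theoretic identification of $\ker \phi$ with quadratic subfields (plus a trivial class), which requires checking both the well-definedness and the injectivity of $cF^{\times 2} \mapsto F(\sqrt c)$ and which depends essentially on $\operatorname{char} F \ne 2$.
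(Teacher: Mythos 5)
Your proof is correct and follows essentially the same route as the paper: both arguments come down to showing $y_{E/F} = \left|\bigl(F^\times\cap (E^\times)^2\bigr)/(F^\times)^2\right|$ using $|F^\times/(F^\times)^2| = |E^\times/(E^\times)^2| = 4$ for $p$-adic fields with $p\ne 2$, and then identifying the nontrivial classes with quadratic subextensions of $E/F$ via $c\mapsto F(\sqrt{c})$. Your four-term exact sequence is simply a streamlined bookkeeping of the index computation the paper carries out by rewriting $Y_{E/F}$ as a quotient of $E^\times/(F^\times\cap(E^\times)^2)$.
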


\begin{proof} We start by noting that the nontrivial elements of $X_{E} = E^\times /(E^\times)^2$ represent quadratic extensions of $E$.  So the elements of $Y_{E/F}$ may be viewed as quadratic extensions of $E$ modulo those of the form $EF'$, where $F'$ is a quadratic extension of $F$.

Now assume $E/F$ is a degree $n$ extension of $p$-adic fields of characteristic zero with $p\ne 2$.   Here is another interpretation of $Y_{E/F}$ and $y_{E/F}$.
We rewrite $Y_{E/F}$ as
$$\frac{E^\times/ (F^\times\cap (E^\times)^2) }{\left( (E^\times)^2/ (F^\times\cap (E^\times)^2)\right) \times \left( F^\times/ (F^\times\cap (E^\times)^2)\right)}$$ and then see that
$$y_{E/F} = \frac{ |E^\times/ (F^\times\cap (E^\times)^2) |}{ \left| (E^\times)^2/ (F^\times\cap (E^\times)^2)\right|\cdot \left| F^\times/ (F^\times\cap (E^\times)^2)\right|}.$$ This implies 
$$y_{E/F} = \frac{4}{\left| F^\times/ (F^\times\cap (E^\times)^2)\right|}.$$
Now the nontrivial elements of $(F^\times\cap (E^\times)^2)/(F^\times)^2$ are in bijective correspondence with the quadratic extensions of $F$ that are contained in $E$ and we have
\begin{eqnarray*}
\left| F^\times/ (F^\times\cap (E^\times)^2)\right|
&=& \frac{\left| F^\times/ (F^\times)^2\right|}{\left|  (F^\times\cap (E^\times)^2)/(F^\times)^2\right|}\\
&=& \frac{4}{\left|  (F^\times\cap (E^\times)^2)/(F^\times)^2\right|.}
\end{eqnarray*}
Our claim follows.\end{proof}

\subsubsection{Split orthogonal involutions}

Let  $$J= J_n =\begin{pmatrix}
& &1\\
&\raisebox{-.1ex}{.} \cdot \raisebox{1.2ex}{.}&\\
1&&
\end{pmatrix}.$$  In this section, we consider the orthogonal involution $\theta_J$ and its $G$-orbit $\Theta_J$.  We assume throughout that $E/F$ is a degree $n$ tamely ramified extension of characteristic-zero $p$-adic fields.
 Note that if $\theta\in \Theta_J$ then $G^\theta$ is a split orthogonal group.  We will prove:

\begin{proposition}\label{sporth}
Given an embedding of $E^\times$ in $G$ with image $T$ then there exists $\theta\in \Theta_J$ such that $T$ is $\theta$-split.  
Consequently, $\Theta_J$ must contain a $T$-orbit that lies in $\cO^T$.
Given $\theta\in \Theta_J$ there exists an embedding of $E^\times$ in $G$ whose image $T$ is $\theta$-split.\end{proposition}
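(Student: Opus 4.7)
The plan is to reduce statements (2) and (3) to statement (1) and then to prove (1) via Corollary~\ref{corcorr} combined with an explicit trace form computation. Statement (2) is an immediate reformulation of (1): the $T$-orbit of a $\theta\in\Theta_J$ making $T$ be $\theta$-split lies in both $\Theta_J$ and $\cO^T$. For (3), choose any embedding $\iota_0\colon E^\times \hookrightarrow G$ with image $T_0$; by (1), some $\theta_0 \in \Theta_J$ makes $T_0$ be $\theta_0$-split. Writing $\theta = g\theta_0 g^{-1}$, which is possible since $\theta,\theta_0 \in \Theta_J$, the torus $T := gT_0g^{-1}$ is the image of $\Int(g)\circ\iota_0$ and is $\theta$-split.

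For part (1), fix $\iota\colon E^\times \hookrightarrow G$ with image $T$. By Corollary~\ref{corcorr}, the $T$-orbits in $\cO^T$ are parametrized by $Y_{E/F}$ via $a \mapsto [\theta^a]$, and Lemma~\ref{yEF} gives $|Y_{E/F}|=1$ since $n$ is odd. So $\cO^T = \{[\theta^1]\}$, and it suffices to show $[\theta^1] \subset \Theta_J$, i.e., that the symmetric matrix $\nu^1 = (\tr_{E/F}(e_ie_j))$ (in any basis $\{e_1,\dots,e_n\}$ of $E$) is similar, up to an $F^\times$-scalar, to $J_n$. Since a change of basis transforms $\nu^1$ by similarity, I may check this for one convenient basis.

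I would verify the claim by splitting into cases based on the structure of $E/F$. For $E/F$ totally ramified tame with $E = F(\pi_E)$ and $\pi_E^n = u\pi_F$, using the basis $\{\pi_E^k\}_{0\le k<n}$ and the fact that $\tr_{E/F}(\pi_E^k)=0$ unless $n\mid k$, the matrix $\nu^1$ has $n$ at $(0,0)$, $nu\pi_F$ at positions $(i,n-i)$ for $1\le i\le n-1$, and zeros elsewhere; this decomposes as $\langle n\rangle\perp\tfrac{n-1}{2}H$, similar after scaling by $n^{-1}$ to $\langle 1\rangle\perp\tfrac{n-1}{2}H = J_n$. For $E/F$ unramified, using an $\gO$-basis of $\gO_E$ yields a unimodular Gram matrix; its reduction mod $\gP$ is a nondegenerate rank-$n$ form over $\f$, which for $n$ odd is similar to the split form by the classification of quadratic forms over finite fields, and Hensel's lemma lifts this to $F$. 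For general tame $E/F$, write $\tr_{E/F}=\tr_{E_0/F}\circ\tr_{E/E_0}$ for the maximal unramified subextension $E_0$, realizing $\tr_{E/F}$ as the transfer under $\tr_{E_0/F}$ of $\tr_{E/E_0}$. Since the transfer of a hyperbolic plane is hyperbolic and the transfer of $\langle 1\rangle_{E_0}$ is the trace form of $E_0/F$, which is itself similar to the split form of rank $[E_0:F]$ by the unramified case, transferring the split form from $E_0$ to $F$ yields the split form of rank $n$ over $F$ up to $F^\times$-scalar.

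The main obstacle I anticipate is the scalar bookkeeping in the general tame case: the similitudes arising in the totally ramified computation over $E_0$ involve $E_0^\times$-scalars that must ultimately be absorbed into $F^\times$-scalars to complete the argument. The odd parity of both $e = [E:E_0]$ and $f = [E_0:F]$, which is forced by $n$ being odd, should suffice, but the calculation requires care.
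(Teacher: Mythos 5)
Your reduction of the second and third assertions to the first matches the paper's proof exactly (conjugating the embedding, resp.\ the involution, by an element of $G$ realizing $\theta = g\cdot\theta_J$). Where you genuinely diverge is in the first assertion. The paper proves the stronger Proposition~\ref{tame_embed}: for a suitable $a\in E^\times$ and basis, $\nu^a$ \emph{equals} $J$; this is done by matching discriminants and Hasse invariants (Lemma~\ref{classicallemma}), using integrality (Lemma~\ref{hasselemma}) in the unramified case, an explicit power-basis computation with $\pi_E^e=\varpi_F$ in the totally ramified case (Lemma~\ref{ram_embed}), and a tensor-product gluing of the two. You instead compute the trace form only up to $F^\times$-similitude: $\langle e\rangle\perp\tfrac{e-1}{2}H$ in the ramified case, unimodularity plus the finite-field classification and Hensel in the unramified case, and Scharlau transfer along $\tr_{E_0/F}$ to glue. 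For odd $n$ this works, and your anticipated ``scalar bookkeeping'' worry in fact evaporates: the similitude factor produced by the totally ramified step can be taken to be $e\in F^\times$ (the off-diagonal entries sit inside hyperbolic planes, where scalars are harmless), so the transfer of $\langle e\rangle$ is $e$ times the trace form of $E_0/F$, and every scalar is absorbed because all ranks in sight are odd. Your route is arguably cleaner for odd $n$, since it avoids Hasse invariants entirely; the paper's route buys the exact normalization $\nu^a=J$ and, more importantly, uniformity in $n$.

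That last point is the one genuine shortfall: Proposition~\ref{sporth} is stated in a subsection where $n$ is \emph{not} assumed odd (the ambient discussion explicitly allows even $n$; oddness is only imposed in the following subsection), and the paper's proof covers even $n$. Your argument is tied to odd parity at every stage: you invoke Lemma~\ref{yEF} to get $y_{E/F}=1$, and your device of absorbing discriminants into $F^\times$-scalars (``$\langle d\rangle\perp\tfrac{m-1}{2}H$ is a scalar multiple of the split form'') is false for even rank, where scaling preserves the discriminant class and the Hasse invariant becomes an honest obstruction. For even $n$ one must actually choose $a$ to adjust the invariants, which is exactly what Lemmas~\ref{unram_embed} and~\ref{ram_embed} and Proposition~\ref{tame_embed} accomplish. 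So as a proof of the proposition as stated, your argument has a parity gap; as a proof of the case needed for the paper's main theorem ($n$ odd), it is correct, modulo writing out the standard transfer facts (additivity, transfer of hyperbolic is hyperbolic, $\tr_{E/F}=\tr_{E_0/F}\circ\tr_{E/E_0}$) that you only sketch.
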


Our approach to the proof of Proposition \ref{sporth} involves the characterization of $G$-orbits in $\cS$ using discriminants and Hasse invariants.  
Since there is some inconsistency in the literature regarding the use of the terms ``discriminant'' and ``Hasse invariant,'' we start by defining these terms.

If $s\in \cS$ then the {\it discriminant of $s$}, which we denote by $\disc s$, 
is the class of $\det s$ in $X_F = F^\times /(F^\times)^2$.  Another important notion of discriminant is the notion of the {\it signed discriminant of $s$} which is the class of $(-1)^{n(n-1)/2}\det s$ in $X_F$.  To explain the power of $-1$ in the latter definition, we recall the definition of the Witt group of $F$.  Consider the semigroup consisting of the equivalence classes on nondegenerate finite-dimensional quadratic spaces over $F$ with respect to the direct sum operation.  The quotient of the latter semigroup with the subsemigroup generated by the hyperbolic planes is a group of order 16 called the
 {\it Witt group of $F$}.  The elements of the Witt group are naturally identified with the equivalence classes of finite anisotropic quadratic spaces.  The element in the Witt group associated to any finite sum of hyperbolic planes is just the identity element, and we observe that the signed discriminant of any such quadratic space is trivial.  Thus the signed discriminant has the favorable property that it is a Witt group invariant, whereas the ordinary discriminant is not.
The appearance of the factor $(-1)^{n(n-1)/2}$ at various points in our discussion below can be interpreted to some degree via the Witt group.  We also note that $$\det J_n = (-1)^{n(n-1)/2}.$$
 
If $A$ is a symmetric matrix in $\GL(m,F)$, $m\in {\mathbb N}$,  then we define the {\it Hasse invariant}
 of
$A$  by $${\rm Hasse}(A)=\prod_{i\le j}  (a_i,a_j),$$
where
$\hbox{diag}(a_1,\dots ,a_m)$ is a diagonal matrix in the  $G$-orbit of $A$ and $(\ ,\ )$ is the Hilbert symbol
$$(a,b)=
    \begin{cases}1,&\mbox{if }z^2=ax^2+by^2\mbox{ has a solution }(x,y,z)\in F^3-\{0\};\\
    -1,&\mbox{otherwise.}\end{cases}$$
    
The following classical result is Theorem 63.20 \cite{O}:

\begin{lemma}\label{classicallemma} The $G$-orbits in $\cS$ are characterized by the discriminant and Hasse invariant.  There are eight possibilities for the pair $({\rm disc}(\nu),{\rm Hasse}(\nu))$.  When $n>2$ each of these possibilities corresponds to a different $G$-orbit in $\cS$ and these eight orbits give all the $G$-orbits in $S$.  When $n=2$, there are only seven orbits since it is impossible to have both ${\rm disc}(\nu)=-1$ and ${\rm Hasse}(\nu)=-1$.
\end{lemma}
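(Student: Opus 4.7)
The plan is to translate the problem into the classification of nondegenerate quadratic forms over $F$: the map $\nu \mapsto q_\nu(x) = {}^t x\cdot \nu\cdot x$ identifies $\cS$ with the set of nondegenerate quadratic forms on $F^n$, and the $G$-action $\nu\mapsto g\nu\,{}^t g$ corresponds to change of basis, so $G$-orbits in $\cS$ are exactly isometry classes of $n$-dimensional nondegenerate quadratic spaces over $F$. The first step, made possible by the assumption that the residue characteristic is odd, is to diagonalize: an induction on $n$ (essentially Gram--Schmidt, which requires only that $2$ be invertible) shows that every $\nu\in\cS$ is $G$-equivalent to $\diag(a_1,\dots,a_n)$ for some $a_i\in F^\times$.

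Next I would verify that the two invariants are well-defined on $G$-orbits. For the discriminant this is immediate from $\det(g\nu\,{}^t g) = (\det g)^2\det\nu$. For the Hasse invariant, independence from the chosen diagonalization reduces to Hilbert symbol identities together with Witt's cancellation theorem. Counting: over a nonarchimedean local field of odd residue characteristic, $|F^\times/(F^\times)^2|=4$, so there are $4$ possible discriminants and $2$ possible Hasse invariants, yielding $8$ possible pairs.

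The hard and substantive step is \emph{completeness}: two nondegenerate $n$-dimensional quadratic forms over $F$ with the same discriminant and Hasse invariant are isometric. The strategy is induction on $n$, using the key local representability criterion (a nondegenerate form $q$ over $F$ represents $c\in F^\times$ iff an explicit condition on $\disc(q)$ and $\Hasse(q)$ and $c$ holds). Given two forms $q_1,q_2$ with the same invariants, one finds a common represented value $c$, splits off a $\langle c\rangle$ from each using Witt's theorem, and invokes the inductive hypothesis on the orthogonal complements, whose invariants agree by multiplicativity of the Hasse invariant in orthogonal sums. Realization of all $8$ pairs for $n\geq 3$ is then easy: one exhibits diagonal forms of the shape $\diag(\alpha,\beta,1,\dots,1,d)$ with $\alpha,\beta,d$ chosen among a fixed set of representatives for $F^\times/(F^\times)^2$, and uses the nontriviality of the Hilbert symbol on pairs of non-square units and uniformizers to attain each pair.

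Finally, for $n=2$ the pair $(-1,-1)$ is excluded: a binary form $\langle a,b\rangle$ of discriminant $-1\pmod{(F^\times)^2}$ has $b\equiv -a^{-1}$, so its Hasse invariant is $(a,-a^{-1})=(a,-a)=1$ by the standard identity $(x,-x)=1$. Equivalently, such a form is hyperbolic, hence split and isotropic, with trivial Hasse invariant. This accounts for the seven orbits in dimension $2$. The main obstacle in the whole argument is the completeness step, which is precisely the content of the local classification of quadratic forms developed in O'Meara's Chapter~63 and packaged in Theorem~63.20; the other ingredients (diagonalization, invariance, counting, realization, and the $n=2$ exception) are comparatively direct once that tool is in hand.
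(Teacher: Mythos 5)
Your outline is correct, but be aware that the paper does not actually prove this lemma: it is invoked as a classical result with a bare citation to Theorem 63:20 of O'Meara, so what you have written is essentially a sketch of the proof of the cited theorem (the standard one: diagonalize, check the invariants are well defined, prove completeness by induction using the local representability criterion and Witt cancellation, realize all eight pairs in rank $\geq 3$, and exclude the pair $(\disc,\Hasse)=(-1,-1)$ in rank $2$). Two small points. First, diagonalization needs only $\mathrm{char}\,F\neq 2$, which holds because $F$ has characteristic $0$; the odd residue characteristic is irrelevant there (it matters instead for statements like $|F^\times/(F^\times)^2|=4$ and $(u,v)=1$ for units). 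Second, your binary computation $\Hasse(\langle a,b\rangle)=(a,b)=(a,-a)=1$ uses the product over $i<j$, whereas the paper's $\Hasse$ is the product over $i\leq j$; the two differ by the factor $(\disc,-1)$, which for $\disc=-1$ equals $(-1,-1)=1$ in odd residue characteristic (indeed $(u,v)=1$ for all units, as in Lemma \ref{hasselemma}), so your conclusion for the $n=2$ exception stands, but the convention should be flagged since the paper is explicit about the discrepancy between $\Hasse$ and $\Hasse_0$.
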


Note that the Hasse invariant is often defined as a product  over $i<j$,
instead of $i\le j$.   We will let ${\rm Hasse}_0(A)$ denote the latter
version of the  Hasse invariant. Though these two definitions are not
equivalent, either may be used to classify quadratic forms. The discrepancy
between these two definitions is the product
\begin{eqnarray*}\prod_{i=1}^m (a_i,a_i)&=&\prod_{i=1}^m (a_i, -1) = ({\rm disc}(A),-1)\\
&=&
\begin{cases}
    1,&\mbox{if $\prod a_i$ is a sum of two squares,}\\
-1,&\mbox{otherwise.}
    \end{cases}\\
&=&
\begin{cases}
    -1,&\mbox{if $-1\notin (F^\times)^2$ and ${\rm disc}(A)$ has odd valuation},\\
1,&\mbox{otherwise.}
    \end{cases}\\
\end{eqnarray*}

\begin{lemma}\label{hasselemma}
 If $\nu\in \GL_n (\gO_F)\cap \cS$  then $\Hasse (\nu) = \Hasse_0 (\nu) =1$.\end{lemma}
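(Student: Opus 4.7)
The plan is straightforward and rests on two classical facts about quadratic forms over $\gO_F$ in odd residue characteristic.

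First, I would diagonalize $\nu$ over $\gO_F$. Since $\nu\in\GL_n(\gO_F)\cap\cS$, the associated symmetric bilinear form on $\gO_F^n$ is unimodular. Combined with the standing hypothesis $p\neq 2$, a standard argument produces an orthogonal $\gO_F$-basis: if some diagonal entry $\langle e_i,e_i\rangle$ is already a unit, peel that vector off; otherwise unimodularity modulo $\gP$ forces some off-diagonal $\langle e_i,e_j\rangle$ to be a unit, and then
$$\langle e_i+e_j,e_i+e_j\rangle = \langle e_i,e_i\rangle + 2\langle e_i,e_j\rangle + \langle e_j,e_j\rangle$$
is a unit because $2\in\gO_F^\times$; peel off $e_i+e_j$ and iterate on the orthogonal complement. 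This produces $g\in\GL_n(\gO_F)$ such that $g\nu\,{}^tg = \mathrm{diag}(a_1,\ldots,a_n)$ with each $a_i\in\gO_F^\times$.

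Next I would compute $\Hasse_0(\nu)=\prod_{i<j}(a_i,a_j)$ using this diagonalization. The key input is that for any units $u,v\in\gO_F^\times$ with $p\neq 2$, the Hilbert symbol $(u,v)$ equals $1$: Hensel's lemma lifts any nonzero solution of $z^2\equiv ux^2+vy^2\pmod{\gP}$ (which exists mod $\gP$ since the form in three variables over a finite field of odd characteristic is isotropic) to a solution over $F$. Hence $\Hasse_0(\nu)=1$.

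Finally, for $\Hasse(\nu)$ I would invoke the identity the authors already recorded, namely $\Hasse(\nu)/\Hasse_0(\nu) = (\disc\nu,-1)$. Since $\det\nu\in\gO_F^\times$ and $-1\in\gO_F^\times$, the same Hilbert-symbol-of-units principle gives $(\det\nu,-1)=1$, so $\Hasse(\nu)=\Hasse_0(\nu)=1$. There is no serious obstacle here; the only substantive ingredients are the orthogonal diagonalization of a unimodular form over $\gO_F$ and the triviality of the Hilbert symbol on pairs of units in odd residue characteristic.
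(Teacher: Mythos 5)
Your proof is correct and follows essentially the same route as the paper: both arguments rest on showing that the Hilbert symbol $(u,v)$ is trivial for units $u,v\in\gO^\times$ (isotropy of ternary forms over $\f$ plus Hensel's lemma) and on diagonalizing $\nu$ over $\gO$ with unit diagonal entries. The only differences are cosmetic --- you carry out the diagonalization by the standard unimodular-lattice argument using $2\in\gO^\times$, and you deduce $\Hasse(\nu)=\Hasse_0(\nu)$ from the discrepancy identity $(\disc\nu,-1)=1$ rather than observing directly that all symbols $(a_i,a_j)$, including $i=j$, are trivial.
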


\begin{proof}
 Suppose $a,b\in \gO^\times$.
 Since every quadratic form of dimension 3 is isotropic over the finite field, we see that we may choose $x,y,z\in \gO$, not all all in $\gP$, such that $ax^2 +by^2 \equiv z^2\pmod{\gP}$.
 Suppose $x\notin \gP$.  Let $f(X) = aX^2 + by^2-z^2$.  Applying Hensel's Lemma to $f$, we see that we can find $x'\in \gO$ with $x-x'\in \gP$ such that $f(x')=0$.
 So, assuming $x\notin \gP$, we get an isotropic vector for $aX^2+bY^2 -Z^2$.  If $x\in \gP$ we can argue similarly, replacing $x$ by $y$ or $z$. We deduce that $(a,b) =1$ whenever $a,b\in \gO^\times$.

Fix $\nu \in \GL_n(\gO)\cap \cS$.  To complete the proof, it suffices to show that $\nu$ is similar
to a diagonal matrix in $\GL_n (\gO)$.  Let us regard $F^n$ as a (nondegenerate) quadratic space $V_1$ with respect to the symmetric bilinear form associated to $\nu$.

Since $V_1$ is nondegenerate, it contains anisotropic vectors.  We also note that every anisotropic vector is clearly a scalar multiple of an anisotropic vector in $\gO^n$ that is primitive in the sense that its reduction modulo $\gP^n$ is nonzero.

Choose a primitive anisotropic vector $v_1$ in $\gO^n$.  Let $V_2$ be the orthogonal complement of $v_1$.  Then $V_1$ is an orthogonal direct sum of $Fv_1$ and $V_2$.  Thus $V_2$ must be nondegenerate.  So we may choose a primitive anisotropic element in $V_2$.  Continuing in this way, we obtain an orthogonal basis $v_1,\dots , v_n$ consisting of primitive anisotropic vectors.  The matrix $\nu$ is similar to the diagonal matrix $A$ whose $i$th diagonal entry is $a_i = {}^t v_i \nu v_i\in \gO$.  It now suffices to show that $A\in \GL_n (\gO)$.

We may now pass to the residue field $\f$ of $F$.
The image $\bar v_1,\dots , \bar v_n$ in $\f^n$ of $v_1,\dots ,v_n$ is a basis of $\f^n$.  The image $\bar\nu\in M(n,\f)$ of $\nu$ is symmetric and invertible.  Note that $\bar a_i = {}^t \bar v_i \bar\nu \bar v_i$ is the image of $a_i$ in $\f$.  Since $\bar a_i$ is nonzero for all $i$, the diagonal matrix $A$ must lie in $\GL_n (\gO)$.
\end{proof}

We have observed that we have an identity
$$\nu^a = {}^t\m{a}\nu^1 = \nu^1 \m{a}.$$
Taking determinants yields the identity
$$\det (\nu^a) = N_{E/F}(a)\cdot \det (\nu^1).$$ 
It is perhaps of some interest to note that the latter identity can also be deduced from the following standard (at least when $a=1$) result.

\begin{lemma}\label{standardstuff} Let $\sigma_1,\dots,\sigma_n$ be the distinct $F$-embeddings of $E$ into a fixed algebraic closure $\overline{F}$ of $F$.  Let $A$ be the diagonal matrix whose $i$th diagonal entry is $\sigma_i (a)$ and let $B$ be the matrix whose $ij$-th entry is $\sigma_j(e_i)$.  Then $\nu^a = B\cdot A\cdot {}^t B$.
\end{lemma}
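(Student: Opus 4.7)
The plan is to prove the identity by direct computation of the $(i,j)$-entry of $BA\,{}^tB$, using only the definitions of $A$, $B$, and $\nu^a$, together with the standard embedding formula for the trace.

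First I would unpack the matrices. Since $A$ is diagonal with $A_{k\ell} = \delta_{k\ell}\sigma_k(a)$, and ${}^tB$ has $(\ell,j)$-entry $B_{j\ell} = \sigma_\ell(e_j)$, the definition of matrix multiplication yields
$$
(BA\,{}^tB)_{ij} = \sum_{k,\ell} B_{ik}\, A_{k\ell}\, ({}^tB)_{\ell j} = \sum_{k} \sigma_k(e_i)\,\sigma_k(a)\,\sigma_k(e_j).
$$
Because each $\sigma_k$ is a field homomorphism $E\to\overline F$, the summand equals $\sigma_k(a e_i e_j)$, so
$$
(BA\,{}^tB)_{ij} = \sum_{k=1}^n \sigma_k(a e_i e_j).
$$

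Next I would invoke the classical identity $\tr_{E/F}(x) = \sum_{k=1}^n \sigma_k(x)$ for all $x\in E$, which holds because $E/F$ is separable (the $\sigma_k$ are exactly the $n = [E:F]$ distinct $F$-embeddings of $E$ into $\overline F$). Applying this with $x = a e_i e_j$ gives
$$
(BA\,{}^tB)_{ij} = \tr_{E/F}(a e_i e_j) = \nu^a_{ij},
$$
the last equality being the definition of $\nu^a$. Since the entries agree for all $i,j$, the identity $\nu^a = B A\,{}^t B$ follows.

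There is essentially no obstacle here: the result is a direct unwinding of definitions once one recognizes the sum over embeddings as the trace. The only non-formal input is the separability of $E/F$, which is built into the standing assumption that $E$ is a tame extension of $F$ (and is explicitly present in the hypotheses of \S4.2). In particular, the lemma is purely computational and requires no additional lemmas from the paper.
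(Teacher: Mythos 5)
Your proof is correct and is essentially the paper's own argument: both identify the $(i,j)$-entry of $B A\,{}^t B$ with $\sum_k \sigma_k(a)\sigma_k(e_i)\sigma_k(e_j) = \tr_{E/F}(a e_i e_j) = \nu^a_{ij}$, using separability to write the trace as a sum over the $F$-embeddings. The only difference is the direction of the computation, which is immaterial.
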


\begin{proof} The $ij$-th entry of $\nu^a$ is $$\tr_{E/F}(ae_ie_j) =\sum_{k=1}^n \sigma_k(a)\sigma_k(e_i)\sigma_k(e_j).$$  But this is the same as the $ij$-th entry of $B\cdot A\cdot {}^tB$.  This yields the desired matrix identity.\end{proof}

\noindent The next lemma is also quite well known ({\it cf.}, Proposition 12.1.4 \cite{IR}).

\begin{lemma}  Let $\beta$ be a primitive element for $E/F$ and take $e_1 =1, e_2 =\beta ,e_3 = \beta^2,\dots,e_n = \beta^{n-1}$.  Let $f$ be the minimal polynomial for $\beta$ over $F$.  Then $\det (\nu^1) = (-1)^{n(n-1)/2}N_{E/F}(f'(\beta)).$
\end{lemma}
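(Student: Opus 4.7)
The plan is to apply Lemma~\ref{standardstuff} with $a=1$ together with the standard evaluation of the Vandermonde determinant, and then identify the resulting discriminant of $\beta$ with $(-1)^{n(n-1)/2} N_{E/F}(f'(\beta))$.

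First, specialize Lemma~\ref{standardstuff} to $a=1$. This gives $\nu^1 = B \cdot {}^t B$, where $B_{ij} = \sigma_j(e_i)$. With the chosen basis $e_i = \beta^{i-1}$, the entries become $B_{ij} = \sigma_j(\beta)^{i-1}$, so $B$ is precisely a Vandermonde matrix in the conjugates $\beta_j := \sigma_j(\beta)$, $j=1,\dots,n$. Hence
$$\det B = \prod_{i<j}(\beta_j - \beta_i),$$
and therefore
$$\det(\nu^1) = (\det B)^2 = \prod_{i<j}(\beta_j - \beta_i)^2.$$

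Next, I would relate this squared Vandermonde expression to $N_{E/F}(f'(\beta))$. Since $f(X) = \prod_i (X-\beta_i)$ splits over $\overline F$, differentiation yields $f'(\beta_j) = \prod_{i\neq j}(\beta_j - \beta_i)$. Taking the product over $j$ collects every pair $\{i,j\}$ twice, once as $(\beta_j - \beta_i)$ and once as $(\beta_i - \beta_j)$, contributing an overall sign of $(-1)^{n(n-1)/2}$:
$$\prod_{j=1}^{n} f'(\beta_j) \;=\; \prod_{i\neq j}(\beta_j - \beta_i) \;=\; (-1)^{n(n-1)/2} \prod_{i<j}(\beta_j - \beta_i)^2.$$

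Finally, I would note that $\prod_j f'(\beta_j) = \prod_j \sigma_j(f'(\beta)) = N_{E/F}(f'(\beta))$, since the $\sigma_j$ exhaust the $F$-embeddings of $E$ into $\overline F$. Combining this with the previous two displays gives
$$\det(\nu^1) = \prod_{i<j}(\beta_j - \beta_i)^2 = (-1)^{n(n-1)/2} N_{E/F}(f'(\beta)),$$
as desired. There is no serious obstacle here; the only point requiring a little care is the correct counting of signs when passing from $\prod_{i\neq j}(\beta_j-\beta_i)$ to $\prod_{i<j}(\beta_j-\beta_i)^2$, which accounts for the factor $(-1)^{n(n-1)/2}$ in the final formula (and which, pleasingly, matches the sign appearing in the signed-discriminant convention recalled earlier in this section).
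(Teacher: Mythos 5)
Your proof is correct. The paper itself gives no argument for this lemma---it is stated as well known with a citation to Proposition 12.1.4 of \cite{IR}---and your proof is precisely the standard one that citation refers to: specialize Lemma~\ref{standardstuff} to $a=1$ (so that $A$ is the identity and $\nu^1 = B\,{}^tB$), evaluate the Vandermonde determinant, and compare $\prod_j f'(\beta_j)=N_{E/F}(f'(\beta))$ with the squared Vandermonde product, with the sign $(-1)^{n(n-1)/2}$ accounted for correctly. This also matches the paper's evident intent in placing Lemma~\ref{standardstuff} immediately before the statement.
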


Let us now examine the discriminant of the $G$-orbit $\cS^a$ in $\cS$ of $\nu^a$.  First we note that, by definition, the discriminant of $\cS^1$ is just the discriminant $\disc (E/F)$ of the extension $E/F$.
Therefore, $$\disc (\cS^a) = N_{E/F}(a)\cdot\disc (E/F).$$
By Lemma~\ref{standardstuff},
$\disc (E/F)$ lies in the image of $(-1)^{n(n-1)/2}N_{E/F}(E^\times)$ in $X_F$.  The same must therefore be true of $\disc (\cS^a)$.  In other words, the signed discriminant of $\cS^a$ lies in the subset $N_{E/F}(E^\times )/(F^\times)^2$ of  $X_F$.

If $n$ is odd then $N_{E/F}$ defines a surjective map from $E^\times$ to $X_F$ since, in fact, the restriction of this map to $F^\times$ is identical to the natural projection $F^\times\to X_F$.
The following result is now immediate.

\begin{lemma}
The matrices in $\nu^1 \, \m{E^\times}= \{ \nu^a:\ a\in E^\times\}$ are symmetric.  The discriminant classes  represented by these elements comprise the image
 of  
 $$(-1)^{n(n-1)/2} N_{E/F}(E^\times)$$
 in $X_F$.  If $n$ is odd then $\disc$ maps $\nu^1\ \m{E^\times}$ onto $X_F$.
 \end{lemma}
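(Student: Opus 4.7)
The proof plan has three parts, corresponding to the three assertions.

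First I would verify symmetry directly from the definition. Since $\nu^a_{ij}=\tr_{E/F}(ae_ie_j)$ and $e_ie_j=e_je_i$, we immediately have $\nu^a_{ij}=\nu^a_{ji}$, so every $\nu^a$ is symmetric. This takes no real work; it is essentially a consequence of commutativity in $E$.

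Next I would compute the discriminant as a coset in $X_F$. Using the already-established identity $\nu^a=\nu^1\,\m{a}$, take determinants to obtain
\[
\det(\nu^a)=\det(\nu^1)\cdot N_{E/F}(a).
\]
By the preceding lemma (with $\beta$ a primitive element and $e_i=\beta^{i-1}$), $\det(\nu^1)=(-1)^{n(n-1)/2}N_{E/F}(f'(\beta))$. The point is that both the general-basis version of $\det(\nu^1)$ and this special-basis version differ only by multiplication by the square $(\det B)^2$ where $B$ is the transition matrix, so $\det(\nu^1)$ modulo $(F^\times)^2$ is $(-1)^{n(n-1)/2}N_{E/F}(f'(\beta))$ for any choice of basis. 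Plugging back in,
\[
\det(\nu^a)\equiv (-1)^{n(n-1)/2}\,N_{E/F}\bigl(af'(\beta)\bigr)\pmod{(F^\times)^2}.
\]
Since $f'(\beta)\in E^\times$ is fixed and $a$ ranges over $E^\times$, the product $af'(\beta)$ ranges over $E^\times$ as well. Therefore $\disc(\nu^a)$ ranges over exactly the image of $(-1)^{n(n-1)/2}N_{E/F}(E^\times)$ in $X_F$, which is the second claim.

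Finally, for $n$ odd, I would prove surjectivity of $\disc$ onto $X_F$. The key observation is that $N_{E/F}$ restricted to $F^\times\subseteq E^\times$ is just $x\mapsto x^n$. Since $n$ is odd, $x\mapsto x^n$ induces the identity map on $X_F=F^\times/(F^\times)^2$, so already $N_{E/F}(F^\times)$ surjects onto $X_F$, hence \emph{a fortiori} $N_{E/F}(E^\times)$ does. Odd $n$ also makes the sign $(-1)^{n(n-1)/2}$ irrelevant (it is a square when $n\equiv 1\pmod 4$ and just a sign otherwise, but in either case multiplication by it is a bijection of $X_F$). Combining with the description from the previous paragraph, $\disc$ maps $\nu^1\,\m{E^\times}$ onto $X_F$.

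None of the three steps presents a real obstacle; the only mild subtlety is justifying that the basis-dependent formula $\det(\nu^1)=(-1)^{n(n-1)/2}N_{E/F}(f'(\beta))$ can be transported to an arbitrary basis, but this is immediate because changing bases multiplies $\nu^1$ by ${}^tB\cdots B$, altering $\det(\nu^1)$ only by a square.
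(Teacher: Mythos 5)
Your proposal is correct and follows essentially the same route as the paper: the identity $\nu^a=\nu^1\,\m{a}$ giving $\det(\nu^a)=N_{E/F}(a)\det(\nu^1)$, the formula $\det(\nu^1)\equiv(-1)^{n(n-1)/2}N_{E/F}(f'(\beta))$ modulo squares, and for odd $n$ the observation that $N_{E/F}$ restricted to $F^\times$ is $x\mapsto x^n$, hence induces the identity on $X_F$. Your remark that a change of basis only alters $\det(\nu^1)$ by a square is a fine (and correct) way to justify transporting the power-basis formula to the given basis.
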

 
To show that there exists $a\in E^\times$ such that $\nu^a$ is similar to $J$, it suffices to show that for some $a$ the matrices $\nu^a$ and $J$ have the same discriminant and the same Hasse invariant.  We start with the case in which $E/F$ is unramified, then we settle the totally and tamely ramified case, and finally we combine the latter cases to obtain the desired result for general tamely ramified extensions.

\begin{lemma}
\label{unram_embed}
If $E/F$ is an unramified extension of degree $n$ then there exists an element $a\in E^\times$ and an $F$-basis of $E$ such that the associated matrix $\nu^a$ is identical to $J$.
\end{lemma}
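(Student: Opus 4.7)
My plan is to use the classification of $G$-orbits in $\cS$ by the pair (discriminant, Hasse invariant) from Lemma~\ref{classicallemma}. Rather than construct a basis that puts $\nu^a$ equal to $J$ directly, I will first produce $a\in E^\times$ and an $F$-basis of $E$ for which $\nu^a$ is merely $G$-similar to $J$; a subsequent linear change of basis then converts the Gram matrix to $J$ on the nose, by the relation $\nu^{a}_{\{e_i'\}} = g\,\nu^{a}_{\{e_i\}}\,{}^t g$ whenever $e_i' = \sum_j g_{ij}e_j$.

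First I would fix any $\gO_F$-basis $e_1,\dots,e_n$ of $\gO_E$, which exists because $E/F$ is unramified and $\gO_E$ is a free $\gO_F$-module of rank $n$.  For $a\in\gO_E^\times$ the matrix $\nu^a$ is then a symmetric matrix in $M_n(\gO)$ with determinant $N_{E/F}(a)\cdot\disc(E/F)$ (by Lemma~\ref{standardstuff} and the formula $\det\nu^a = N_{E/F}(a)\det\nu^1$).  Both factors are units, the second because $E/F$ is unramified, so $\nu^a \in \GL_n(\gO)\cap\cS$.  By Lemma~\ref{hasselemma} this forces $\Hasse(\nu^a) = 1$, and since $J\in\GL_n(\gO)\cap\cS$ as well, the same lemma gives $\Hasse(J) = 1$.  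The Hasse invariants therefore automatically agree, independently of $a$.

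Next I would choose $a$ so that the discriminants also agree, i.e.\ so that
$$N_{E/F}(a)\,\disc(E/F) \equiv (-1)^{n(n-1)/2} \pmod{(F^\times)^2}.$$
This is possible because the norm map $N_{E/F}\colon \gO_E^\times \to \gO_F^\times$ is surjective for unramified extensions (the residue-level norm $\f_E^\times\to\f^\times$ is surjective, and $N\colon 1+\gP_E\to 1+\gP_F$ is surjective by a standard successive-approximation argument using the surjectivity of $\tr_{\f_E/\f}$ on graded pieces).  With this choice, Lemma~\ref{classicallemma} yields $g\in G$ with $g\nu^a\,{}^t g = J$.  Replacing the original basis by $e_i' = \sum_j g_{ij} e_j$ gives a new $F$-basis whose associated Gram matrix is exactly $g\nu^a\,{}^t g = J$, as required.

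The only even mildly technical step is the surjectivity of $N_{E/F}$ on unit groups, which is standard for unramified extensions; everything else is a formal assembly of Lemmas~\ref{classicallemma} and~\ref{hasselemma} together with the observation that both $J$ and an integral $\nu^a$ lie in $\GL_n(\gO)\cap\cS$.
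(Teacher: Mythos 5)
Your argument is correct, and its skeleton is the one the paper uses: reduce to matching the discriminant and Hasse invariant of $\nu^a$ with those of $J$, get $\Hasse(\nu^a)=1=\Hasse(J)$ from integrality via Lemma~\ref{hasselemma}, invoke Lemma~\ref{classicallemma} to produce $g$ with $g\nu^a\,{}^tg = J$, and absorb $g$ into a change of basis. The one place you genuinely diverge is the discriminant-matching step. The paper works with the power basis $1,\beta,\dots,\beta^{n-1}$ for a unit primitive element $\beta$ whose minimal polynomial $f$ stays irreducible mod $\gP$, and takes $a=f'(\beta)^{-1}$; by the formula $\det(\nu^a)=N_{E/F}(af'(\beta))(-1)^{n(n-1)/2}$ this makes $\det\nu^a$ equal to $\det J$ on the nose, with no further input. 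You instead take an arbitrary $\gO_F$-basis of $\gO_E$ and adjust $a$ using surjectivity of $N_{E/F}\colon\gO_E^\times\to\gO_F^\times$ for unramified extensions. That step is valid and your sketch of it (surjectivity of the residue norm plus successive approximation on the filtration $1+\gP_E^k$ via the trace) is the standard proof, but it imports an extra fact the paper's explicit choice of $a$ avoids; in exchange you avoid having to exhibit a unit primitive element with irreducible reduction and the $f'(\beta)$ computation of Lemma~\ref{standardstuff} and its corollary. Both routes deliver the same conclusion, and your final change-of-basis step $e_i'=\sum_j g_{ij}e_j$, giving Gram matrix $g\nu^a\,{}^tg=J$, matches the paper exactly.
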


\begin{proof}
When $a\in E^\times$ and $\beta$ is a primitive element for $E/F$, we have
$$\det (\nu^a) = N_{E/F}(af'(\beta))(-1)^{n(n-1)/2}, $$
where $f$ is the minimal polynomial of $\beta$ and $\nu^a$ is defined with respect to the basis $e_1 =1, e_2 =\beta ,e_3 = \beta^2,\dots,e_n = \beta^{n-1}$.  We may choose such a $\beta$ which, in addition, lies in $\gO_E^\times$.
Take $a = f'(\beta)^{-1}$.  
Since $f$ is irreducible modulo $\gP$, the image of $f'(\beta)$ in $\gO_E/\gP_E$ is nonzero, and hence $f'(\beta)$ and $a$ are units.

Since $\tr_{E/F}$ takes $\gO_E$ to $\gO$, $\nu^a$ has entries in $\gO$.  Moreover, since $\det\nu^a = (-1)^{n(n-1)/2}\in\gO^\times$,
it follows that $\nu^a\in\GL_n (\gO)$.  Thus, according to Lemma \ref{hasselemma}, we have $\Hasse (\nu^a) = 1=\Hasse (J)$.  We also have
$$\det (\nu^a) = (-1)^{n(n-1)/2} = 
\det J.$$
Therefore, by Lemma \ref{classicallemma}, $\nu^a$ must be similar to $J$.
Now choose $g\in G$ such that $g\cdot \nu^a \cdot {}^t g = J$.  Define an $F$-basis $e'_1,\dots , e'_n$ of $E$ by $e'_i = \sum_j g_{ij} e_j$.  Then the matrix $\nu^a$ with respect to $e'_1,\dots , e'_n$ is precisely $J$.
\end{proof}

\begin{lemma}
\label{ram_embed}
If $E/F$ is a totally and tamely ramified extension of degree $n$ then there exists an element $a\in E^\times$ and an $F$-basis of $E$ such that the associated matrix $\nu^a$ is identical to $J$.
\end{lemma}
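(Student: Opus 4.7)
The plan is to exhibit an explicit $F$-basis of $E$ and an element $a \in E^\times$ for which $\nu^a = J$ on the nose, bypassing the indirect ``similar-then-change-basis'' step used in the unramified case (Lemma~\ref{unram_embed}). The totally tamely ramified geometry, together with the Eisenstein shape of a suitable uniformizer of $E$, makes this direct realization possible.

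First I will invoke the standard structure of totally tamely ramified extensions: since $\gcd(n,p) = 1$ (with $p$ the residue characteristic of $F$), there exists a uniformizer $\varpi \in E$ whose minimal polynomial over $F$ is the Eisenstein polynomial $X^n - \pi$ for some uniformizer $\pi$ of $F$; in particular, $n \in \gO^\times$. I will then take the basis $e_i = \varpi^{i-1}$ for $i = 1,\dots,n$ and the element $a = (n\varpi^{n-1})^{-1} \in E^\times$, so that $a\, e_i e_j = \varpi^{\,i+j-n-1}/n$.

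The main step is the trace computation
$$
\nu^a_{ij} \;=\; \tr_{E/F}(a\, e_i e_j) \;=\; \tfrac{1}{n}\,\tr_{E/F}(\varpi^{\,i+j-n-1}).
$$
Since the conjugates of $\varpi$ are the $\zeta\varpi$ as $\zeta$ ranges over the $n$th roots of unity, $\tr_{E/F}(\varpi^k) = \varpi^k\bigl(\sum_\zeta \zeta^k\bigr)$ vanishes unless $n\mid k$, in which case it equals $n\varpi^k$. For $i,j\in\{1,\dots,n\}$, the exponent $i+j-n-1$ lies in $\{-(n-1),\dots,n-1\}$, so divisibility by $n$ forces $i+j = n+1$; in that case the exponent equals $0$, giving $\nu^a_{ij} = \tr_{E/F}(1)/n = 1$. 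This yields $\nu^a = J$ exactly.

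I do not anticipate any serious obstacle: once the Eisenstein generator has been selected, everything reduces to a root-of-unity sum. The only point worth a small amount of care is the vanishing of $\tr_{E/F}(\varpi^k)$ for negative $k$ in the range $-(n-1) \le k < 0$, but this follows from the same sum $\sum_\zeta \zeta^k$, which makes sense for any integer exponent (alternatively, one can rewrite $\varpi^{-m} = \varpi^{(n-1)m}/\pi^m$ and reduce to the nonnegative case).
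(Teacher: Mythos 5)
Your proposal is correct and follows essentially the same route as the paper: choose a uniformizer $\varpi$ of $E$ with $\varpi^n$ a uniformizer of $F$ (possible by tameness), take the power basis $1,\varpi,\dots,\varpi^{n-1}$ and $a=\varpi^{1-n}/n$, and verify $\nu^a=J$ directly from the vanishing of $\tr_{E/F}(\varpi^k)$ for $n\nmid k$. The only cosmetic difference is that you compute the trace via the Galois conjugates $\zeta\varpi$, whereas the paper evaluates it as the matrix trace $\tr(\m{\varpi}^k)$ of the regular representation.
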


\begin{proof}
As in the proof of Lemma \ref{unram_embed}, we choose a certain
$a\in E^\times$ and a certain primitive element $\beta$ for $E/F$, and we use  the  $F$-basis $e_1 =1, e_2 =\beta ,e_3 = \beta^2,\dots,e_n = \beta^{n-1}$ of $E$.  
In the present case, we take $\beta$ to be an element of $E$ that is an $n$-th root of a prime element $\varpi_F$ in $F$.  (The fact that this is possible follows from Proposition 12 \cite{sL}.)

We observe that
$$\m{\beta} = \begin{pmatrix}
0&\cdots &0&\varpi_F\\
1&\ddots&&0\\
&\ddots &\ddots&\vdots\\
0&&1&0
\end{pmatrix} $$
It is easy to evaluate $\tr_{E/F}(\beta^k)= \tr (\m{\beta}^k)$ for any $k$ and to verify that the trace is zero unless $k$ is a multiple of $n$.
Taking $a= \beta^{1-n}/n$, we obtain 
$\nu^a =J$.
\end{proof}

\begin{proposition}
\label{tame_embed}
If $E/F$ is a tamely ramified extension of degree $n$ then there exists an element $a\in E^\times$ and an $F$-basis of $E$ such that the associated matrix $\nu^a$ is identical to $J$.
\end{proposition}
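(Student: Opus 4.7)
The plan is to reduce to the two cases already settled: the unramified case (Lemma~\ref{unram_embed}) and the totally tamely ramified case (Lemma~\ref{ram_embed}). Let $E_0$ denote the maximal unramified subextension of $E$, so that $E_0/F$ is unramified of degree $f$, $E/E_0$ is totally tamely ramified of degree $e$, and $n = ef$. The idea is to build the desired basis of $E/F$ as a product of bases for the two pieces, and to build $a$ as the product of elements coming from each piece.

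First I would apply Lemma~\ref{unram_embed} to $E_0/F$ to obtain an element $a_0\in E_0^\times$ and an $F$-basis $u_1,\dots ,u_f$ of $E_0$ with $\tr_{E_0/F}(a_0 u_i u_{i'}) = (J_f)_{ii'}$. Next I would apply Lemma~\ref{ram_embed} to the totally tamely ramified extension $E/E_0$ to obtain $a_1\in E^\times$ and an $E_0$-basis $v_1,\dots ,v_e$ of $E$ with $\tr_{E/E_0}(a_1 v_j v_{j'}) = (J_e)_{jj'}$. Set $a = a_0 a_1 \in E^\times$, and take the $F$-basis $\{u_i v_j : 1\le i\le f,\ 1\le j\le e\}$ of $E$ ordered lexicographically in $(i,j)$.

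The key computation uses transitivity of the trace, $\tr_{E/F} = \tr_{E_0/F}\circ \tr_{E/E_0}$, combined with the $E_0$-linearity of $\tr_{E/E_0}$:
\[
\tr_{E/F}\bigl(a\cdot(u_i v_j)(u_{i'} v_{j'})\bigr)
= \tr_{E_0/F}\!\bigl(a_0 u_i u_{i'}\,\tr_{E/E_0}(a_1 v_j v_{j'})\bigr)
= (J_f)_{ii'}(J_e)_{jj'}.
\]
Hence, with respect to the chosen basis, the matrix $\nu^a$ equals the Kronecker product $J_f\otimes J_e$. A direct index check shows $J_f\otimes J_e = J_n$: with linear indices $I=(i-1)e+j$ and $I'=(i'-1)e+j'$, the entry $(J_f)_{ii'}(J_e)_{jj'}$ is nonzero precisely when $i+i' = f+1$ and $j+j' = e+1$, in which case $I+I' = (f-1)e + (e+1) = n+1$; conversely, since $2\le j+j'\le 2e$, the congruence $j+j'\equiv 1\pmod e$ imposed by $I+I'=n+1$ forces $j+j'=e+1$ and then $i+i'=f+1$.

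There is really no serious obstacle beyond the bookkeeping of verifying the Kronecker-product identity and choosing the correct lexicographic ordering of the basis. The substance of the argument is entirely contained in Lemmas~\ref{unram_embed} and~\ref{ram_embed} together with the transitivity of the trace, and the passage from them to the general tame case is essentially functorial.
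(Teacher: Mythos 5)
Your proof is correct and follows essentially the same route as the paper: decompose $E/F$ through the maximal unramified subextension, apply Lemmas~\ref{unram_embed} and~\ref{ram_embed} to the two pieces, and use transitivity of the trace to identify the Gram matrix of the product basis with a Kronecker product of the two exchange matrices, which equals $J_n$. The only differences are cosmetic (your ordering of the product basis and your explicit index check that the Kronecker product of the two $J$'s is $J_n$, which the paper asserts without detail).
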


\begin{proof}
Let $K/F$ be the maximal unramified subextension of $E/F$.  Let $f = [K:F]$ and $e = n/f = [K:E]$.  
We define a tensor product map
$$M(e,F)\times M(f,F)\to M(n,F)$$
by taking $A\otimes B$ to be the $e\times e$ block matrix whose $ij$-th block is $A_{ij}B\in M(f,F)$.

Now suppose $a\in E^\times$ and let $\alpha$ be a $K$-basis $\alpha_1,\dots , \alpha_e$ of $E$.  Define a symmetric matrix $\nu^a_\alpha \in \GL_e (K)$ by $(\nu^a_\alpha)_{ij} = \tr_{E/K} (a\alpha_i\alpha_j)$.
Similarly, suppose $b\in K^\times$ and let $\beta$ be a $F$-basis $\beta_1,\dots , \beta_f$ of $K$.  Define a symmetric matrix $\nu^b_\beta \in \GL_f (F)$ by $(\nu^b_\beta)_{k l} = \tr_{K/F} (b\beta_k\beta_l)$.
Let $\alpha\otimes\beta$ be the $F$-basis of $E$ given by
$$\alpha_1\beta_1,\dots , \alpha_1 \beta_f, \alpha_2 \beta_1,\dots ,\alpha_2\beta_f , \dots ,\alpha_e\beta_1,\dots , \alpha_e \beta_f.$$

Suppose that $\nu^a_\alpha$ has entries in $F$.  Then the tensor product $\nu^a_\alpha\otimes \nu^b_\beta$ is defined and, according to the following calculation, it is identical to $\nu^{ab}_{\alpha\otimes \beta}$:
\begin{eqnarray*}
(\nu^a_\alpha\otimes \nu^b_\beta)_{ijkl} 
&=& (\nu^a_\alpha)_{ij} (\nu^b_\beta )_{kl}\\
&=& \tr_{E/K} (a\alpha_i\alpha_j)\tr_{K/F} (b\beta_k\beta_l)\\
&=&\tr_{K/F} (b\beta_k\beta_l  \tr_{E/K} (a\alpha_i\alpha_j))\\
&=&\tr_{K/F} (  \tr_{E/K} ( b\beta_k\beta_l a\alpha_i\alpha_j))\\
&=&\tr_{E/F} ( a b\alpha_i\alpha_j\beta_k\beta_l )\\
&=&(\nu^{ab}_{\alpha\otimes \beta})_{ijkl} .
\end{eqnarray*}

By Lemma~\ref{unram_embed}, we may choose $a$ and $\alpha$ such that $\nu^a_\alpha = J_e$.
By Lemma~\ref{ram_embed}, we may choose $b$ and $\beta$ such that $\nu^b_\beta = J_f$.  
Then $J_n = J_e\otimes J_f = \nu^a_\alpha \otimes \nu^b_\beta = \nu^{ab}_{\alpha\otimes \beta}$
which proves our claim.
\end{proof}

\medskip\noindent{\it Proof of Proposition  \ref{sporth}.}
Proposition \ref{tame_embed} and Lemma \ref{ecrossmodfcross} imply  that there exists an embedding $x\mapsto \m{x}$ of $E^\times$ in $G$ whose image $T$ is $\theta_J$-split.  If $g\in G$ and $\theta = g\cdot \theta_J$ then $g\mapsto g\m{x} g^{-1}$ defines an embedding of $E^\times$ in $G$ whose image is $\theta$-split.

Now suppose we are given an embedding of $E^\times$ in $G$ and let $T$ denote its image.  
Lemma \ref{Fembed} implies that the embedding must come from an $F$-basis $e_1,\dots , e_n$ of $E$.  Proposition \ref{tame_embed} says that there must exist another basis $e'_1,\dots ,e'_n$ and $a\in E^\times$ such that the associated matrix $\nu^a$ is $J$.
The change-of-basis matrix in $G$ between these bases sends $\theta_J$ to a matrix $\theta\in \Theta_J$ such that 
 $T$ is $\theta$-split.  
Consequently, $\Theta_J$ must contain a $T$-orbit that lies in $\cO^T$ which completes the proof.\hfill$\square$

\subsubsection{Refined results when $n$ is odd}

In this section, we assume  $n$ is odd.  

\begin{lemma}\label{thetastabisthetasplit}
Suppose $L$ is a subring of $M(n,F)$ that is a field extension of $F$ of odd degree.  Assume $\theta$ is an orthogonal involution of $G = \GL_n (F)$ such that $\theta (L^\times)=L^\times$.  Then $\theta (t) = t^{-1}$ for all $t\in L^\times$.
\end{lemma}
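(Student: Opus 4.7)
The plan is to reduce the statement to a parity argument about the automorphism group of $L/F$. Since $\theta$ is an orthogonal involution, we may write $\theta = \theta_\nu$, where $\nu \in G$ is symmetric, and consider the associated $F$-linear anti-automorphism
$$\alpha : M(n,F) \to M(n,F), \qquad \alpha(X) = \nu^{-1}\cdot {}^t X\cdot \nu.$$
A direct calculation, using only that $\nu$ is symmetric, shows $\alpha^2 = \mathrm{id}$. Moreover, $\alpha$ is multiplicative up to order ($\alpha(XY) = \alpha(Y)\alpha(X)$), and for $g \in G$ one has $\theta(g) = \alpha(g^{-1}) = \alpha(g)^{-1}$, so the claim $\theta(t) = t^{-1}$ for $t \in L^\times$ is equivalent to $\alpha|_{L^\times} = \mathrm{id}$.

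Next I would show that $\alpha$ stabilizes $L$ and restricts there to an $F$-algebra automorphism. The hypothesis $\theta(L^\times) = L^\times$ and the identity $\theta(g) = \alpha(g)^{-1}$ give $\alpha(L^\times) = L^\times$. Since $L$ is a field, $L$ is the $F$-linear span of $L^\times$ (e.g.\ the powers of a primitive element), and $\alpha$ is $F$-linear, so $\alpha(L) = L$. Because $L$ is commutative, the anti-multiplicativity of $\alpha$ on $M(n,F)$ collapses to ordinary multiplicativity on $L$, so $\alpha|_L$ is an element of $\mathrm{Aut}_F(L)$.

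The finishing step is a parity count. We have $(\alpha|_L)^2 = \alpha^2|_L = \mathrm{id}_L$, so $\alpha|_L$ is an involution in $\mathrm{Aut}_F(L)$. For the finite separable extension $L/F$ (separability is automatic in characteristic zero), the fixed field of $\mathrm{Aut}_F(L)$ is an intermediate field $L^{\mathrm{Aut}_F(L)}$ such that $L/L^{\mathrm{Aut}_F(L)}$ is Galois, and hence $|\mathrm{Aut}_F(L)| = [L:L^{\mathrm{Aut}_F(L)}]$ divides $[L:F]$. Since $[L:F]$ is odd, $\mathrm{Aut}_F(L)$ has odd order, and therefore contains no element of order $2$. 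Thus $\alpha|_L$ must be the identity, and the lemma follows.

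I expect no serious obstacle: the main point is simply to package $\theta$ as the inverse of an honest anti-automorphism $\alpha$ of $M(n,F)$ so that the commutativity of $L$ can be invoked, after which the odd-order hypothesis kills any nontrivial involution of $L/F$.
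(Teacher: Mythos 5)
Your proof is correct and follows essentially the same route as the paper's: restrict the anti-automorphism $x\mapsto \nu^{-1}\,{}^t x\,\nu$ to $L$, observe it is an $F$-algebra automorphism of order dividing $2$, and kill it with the odd-degree hypothesis. You simply spell out two details the paper leaves implicit (that this map actually preserves $L$, and that one should speak of $\mathrm{Aut}_F(L)$, whose order divides $[L:F]$, rather than assuming $L/F$ is Galois), which is a welcome bit of extra care but not a different argument.
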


\begin{proof}
Choose a symmetric matrix $\nu$ such that $\theta = \theta_\nu$.  Then $\sigma (x) = \nu^{-1}\cdot {}^t x\cdot \nu$ defines an $F$-automorphism of $L$ whose square is the identity map.  Since $\gal (L/F)$ has odd order, $\sigma$ must be identity map on $L$.  This is equivalent to our assertion.
\end{proof}

The latter result shows that every $\theta$-stable torus in $\bG = \GL_n$ must in fact be $\theta$-split.

In the next result, we continue to assume that we have fixed an embedding of $E$ in $M(n,F)$ and we let $T$ denote the image of $E^\times$.

\begin{proposition}\label{refinednoddresult} Assume $n$ is odd.  The $G$-orbit $\Theta_J$  is the unique $G$-orbit of orthogonal involutions of $G$ that contains an involution $\theta$ for which $T$ is $\theta$-stable.  For every such involution $\theta$, the torus $T$ must in fact be $\theta$-split.   The set of all $\theta\in \Theta_J$ such that $T$ is $\theta$-split comprises a single $T$-orbit in $\Theta_J$.  The orthogonal group associated to any element of $\Theta_J$ is a split orthogonal group.  
\end{proposition}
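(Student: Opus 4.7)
The plan is to deduce the four assertions by chaining together results already established in this section, beginning with the simplification that in odd degree, ``$\theta$-stable'' and ``$\theta$-split'' coincide for $T$.

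First I would dispose of assertion (2). Applying Lemma \ref{thetastabisthetasplit} to the subring $L = \iota(E) \subset M(n,F)$ (a field extension of $F$ of odd degree $n$), any orthogonal involution $\theta$ of $G$ for which $\theta(T) = T$ automatically satisfies $\theta(t) = t^{-1}$ for all $t \in T$. Consequently, ``$T$ is $\theta$-stable'' and ``$T$ is $\theta$-split'' are equivalent conditions in the present setting. This reduction is what permits the use of the parametrization developed for $\theta$-split tori in the subsequent step.

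Next I would establish assertions (1) and (3) simultaneously via the counting of $T$-orbits. By Corollary \ref{corcorr}, the map $a \mapsto \theta^a$ provides a bijection between $Y_{E/F} = E^\times/(E^\times)^2 F^\times$ and the set $\cO^T$ of $T$-orbits of orthogonal involutions for which $T$ is $\theta$-split. Lemma \ref{yEF} gives $y_{E/F} = 1$ since $n$ is odd, so $\cO^T$ consists of a single $T$-orbit. Proposition \ref{sporth} guarantees that $\Theta_J$ contains at least one involution for which $T$ is $\theta$-split, so this unique $T$-orbit is contained in $\Theta_J$; this is assertion (3). Any $G$-orbit of orthogonal involutions containing some $\theta$ with $T$ being $\theta$-stable (equivalently $\theta$-split, by step one) must then contain the unique $T$-orbit in $\cO^T$, hence must coincide with $\Theta_J$; this is assertion (1).

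Finally, assertion (4) is immediate from the definitions. The fixed-point group $G^{\theta_J}$ is by construction the orthogonal group of the symmetric bilinear form with matrix $J$, which is the standard anti-diagonal split form; hence $G^{\theta_J}$ is a split orthogonal group in $n$ variables. For any $\theta = g \cdot \theta_J \in \Theta_J$, one has $G^\theta = g\, G^{\theta_J} g^{-1}$, which is $F$-isomorphic to $G^{\theta_J}$ and therefore also split. There is no genuine obstacle in this argument; the only point that requires care is deploying the equivalence between $\theta$-stability and $\theta$-splitness \emph{before} invoking Corollary \ref{corcorr}, since the latter is phrased in terms of $\theta$-split tori.
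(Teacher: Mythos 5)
Your proposal is correct and follows essentially the same route as the paper: Lemma \ref{thetastabisthetasplit} to identify $\theta$-stability with $\theta$-splitness for $T$, Corollary \ref{corcorr} together with Lemma \ref{yEF} to get a single $T$-orbit, Proposition \ref{sporth} to place that orbit inside $\Theta_J$, and conjugacy to see every group attached to $\Theta_J$ is split. No substantive differences from the paper's argument.
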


\begin{proof} 
According to Corollary \ref{corcorr} and Lemma \ref{yEF}, the map $\mu_{E/F}: Y_{E/F}\to \cO^T$ of \S \ref{sec:yEFsec} reduces to a bijection between two singleton sets when $n$ is odd.
This says that there is a unique $T$-orbit of orthogonal involutions $\theta$ such that $T$ is $\theta$-split and every involution of the form $\theta^a$, for $a\in E^\times$, lies in this orbit.
The fact that $\theta$-stable tori must be $\theta$-split follows from Lemma \ref{thetastabisthetasplit}.

Proposition \ref{sporth} implies that the latter orbit lies in $\Theta_J$.  It is well known and easily verified that the orthogonal group associated to $\theta_J$ is split.  
The orthogonal groups associated to other elements of $\Theta_J$ are $G$-conjugate to the latter group and hence they must also be split.
\end{proof}

\begin{corollary}
\label{cor:theta-symmetric}
If $n$ is odd and $\theta$ is an orthogonal involution then the following are equivalent:
\begin{itemize}
\item $\theta (\vec\bG) = \vec\bG$,
\item  $Z^0$ is a $\theta$-split torus,
\item $\Psi$ is weakly $\theta$-symmetric.
\end{itemize}
\end{corollary}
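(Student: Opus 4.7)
The plan is to verify the three implications $(1) \Rightarrow (2)$, $(2) \Rightarrow (1)$, and $(1) \Leftrightarrow (3)$ by combining Lemma~\ref{thetastabisthetasplit} and Proposition~\ref{orthinv} with the structural features of the Howe datum $\Psi$ associated to the cuspidal $G$-datum. Recall that in the Howe construction, $\bZ^0 = \iota(R_{E_0/F}\GL_1)$ is the center of $\bG^0$, that $\bZ^i = \iota(R_{E_i/F}\GL_1) \subset \bZ^0$ for all $i$ (since the tower $F = E_d \subsetneq \cdots \subsetneq E_0$ satisfies $E_i^\times \subset E_0^\times$), and that $\bG^i = Z_\bG(\bZ^i)$. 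The oddness of $n$ implies that every $[E_i:F]$ is odd, so the hypotheses of Lemma~\ref{thetastabisthetasplit} and Proposition~\ref{orthinv} are available throughout.

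For $(1) \Rightarrow (2)$: if $\theta(\vec\bG) = \vec\bG$, then in particular $\theta(\bG^0) = \bG^0$, so $\theta$ stabilizes the center $\bZ^0$ and hence $Z^0 = \iota(E_0^\times)$. Applied to $L = \iota(E_0)$, a subfield of $M(n,F)$ of odd degree, Lemma~\ref{thetastabisthetasplit} gives $\theta(t) = t^{-1}$ for all $t \in Z^0$. By Zariski density of $Z^0$ in $\bZ^0$, the torus $\bZ^0$ is $\theta$-split. Conversely, for $(2) \Rightarrow (1)$: if $\bZ^0$ is $\theta$-split, then $\theta$ acts as inversion on every subtorus, so each $\bZ^i \subset \bZ^0$ is also $\theta$-split and, in particular, satisfies $\theta(\bZ^i) = \bZ^i$. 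Taking centralizers gives
\[
\theta(\bG^i) = \theta(Z_\bG(\bZ^i)) = Z_\bG(\theta(\bZ^i)) = Z_\bG(\bZ^i) = \bG^i,
\]
so $\theta(\vec\bG) = \vec\bG$.

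The implication $(3) \Rightarrow (1)$ holds by the very definition of weakly $\theta$-symmetric. For $(1) \Rightarrow (3)$, it remains to check $\vec\phi \circ \theta = \vec\phi^{-1}$. Given $\theta(\vec\bG) = \vec\bG$, we may iterate Proposition~\ref{orthinv} (applicable since each $n_i$ is odd) to conclude that for every $i$, the restriction $\theta|_{G^i}$ is an orthogonal involution of $G^i \cong \GL_{n_i}(E_i)$. Any orthogonal involution of $\GL_{n_i}(E_i)$ inverts the determinant, so $\det_i \circ \theta = \det_i^{-1}$ on $G^i$. Since $\phi_i = \varphi_i \circ \det_i$ and $\varphi_i$ is a quasicharacter, we obtain $\phi_i \circ \theta = \varphi_i \circ \det_i^{-1} = \phi_i^{-1}$ for each $i$, hence $\vec\phi \circ \theta = \vec\phi^{-1}$.

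The substantive content lies entirely in the two results already established (Lemma~\ref{thetastabisthetasplit}, which bootstraps $\theta$-stability of a torus to $\theta$-splitness using odd degree, and Proposition~\ref{orthinv}, which propagates the ``orthogonal'' feature to each $G^i$); the only step that requires a brief argument beyond quoting these results is the passage from $\bZ^0$ being $\theta$-split to $\theta$-stability of all the $\bG^i$, which is immediate once one observes $\bZ^i \subset \bZ^0$. I do not anticipate any serious obstacle.
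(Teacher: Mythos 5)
Your proof is correct and follows essentially the same route as the paper: Lemma~\ref{thetastabisthetasplit} (using oddness of $[E_0:F]$) handles the equivalence of the first two conditions via $\theta$-stability of $\bZ^0$ and the inclusion $\bZ^i\subset\bZ^0$, and Proposition~\ref{orthinv} gives that $\theta$ restricts to an orthogonal involution of each $G^i$, whence $\det_i\circ\theta=\det_i^{-1}$ and $\phi_i\circ\theta=\phi_i^{-1}$, which is exactly the paper's argument phrased via $\det_i(g\theta(g))=1$.
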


\begin{proof}
Assume $Z^0$ is a $\theta$-split torus.  Then $Z^i$ is a $\theta$-split torus for all $i$, since it is a torus and it is contained in $Z^0$.  Since $Z^i$ is $\theta$-split, it is $\theta$-stable and hence so is its stabilizer in $G$.  So $\vec\bG$ is $\theta$-stable.  Conversely, if $\vec\bG$ is $\theta$-stable then $Z_0$ must be $\theta$-stable and hence $\theta$-split by the Lemma \ref{thetastabisthetasplit}.  This establishes the equivalence of the first two conditions.

Now consider the quasicharacter $\phi_i$ in $\vec\phi = (\phi_0,\dots ,\phi_d)$ and assume $\theta (G^i)=G^i$.  Then $\theta$ restricts to an orthogonal involution of $G^i$ with respect to any isomorphism $G^i \cong \GL_{n_i}( E_i)$, according to Proposition \ref{orthinv}.  Thus if $g\in G^i= \theta (G^i)$ then $\det_i(g\theta (g))=1$ so $g\theta(g)$ lies in the commutator subgroup of $G^i$.  This implies that our first and third conditions are equivalent.
\end{proof}

\section{Orthogonal periods}

Suppose $n$ is odd from now on.  Fix a $G$-orbit  $\Theta$ of orthogonal involutions of $G$ and fix a Howe datum $\Phi = (\varphi , E, \{ \varphi_i\}, \{ E_i\},\iota : E\hookrightarrow M(n,F))$ in the sense of Definition \ref{Howedatum}.
Let $\Psi = (\vec\bG ,y,\rho,\vec\phi)$ be a cuspidal $G$-datum that is associated to $\Phi$ as in \S \ref{sec:attaching}.  Recall from Theorem~\ref{thm:K^0_formula} the formula
$$\langle \Theta, \Psi\rangle_G = \sum_{[\theta]\sim[\Psi]} m_{K^0} ([\theta])\ \langle [\theta] , [\Psi]\rangle_{K^0}.$$  
Our objective in this section is to
 compute all of the terms on the right hand side of the latter formula.

Let us briefly sketch our strategy.
From Proposition \ref{refinednoddresult}, it follows that if $\langle \Theta ,\xi\rangle_G$ is nonzero then $\Theta = \Theta_J$.  So let us assume $\Theta = \Theta_J$.
In \S \ref{sec:mKt}, we show that $m_{K^0}([\theta])=1$ for all orbits $[\theta]$ in our examples.  (This is not true for even $n$.)

By definition, if a summand $\langle [\theta],[\Psi]\rangle_{K^0}$ is nonzero then it is equal to the dimension of the space $\Hom_{K^{0,\theta}}(\rho',\eta'_\theta)$.  (See~\S\ref{subsub:simplified}.)
Using our generalized version of Lusztig's results (Theorem \ref{Lusztigwithchi}), we then show that we can assume that our torus $\bT$ in Definition~\ref{cuspidalGdatum} is $\theta$-split.

Next, we use Proposition \ref{refinednoddresult} to identify a particular summand as the only summand that can be nonzero.    In \S \ref{sec:etriv}, we give an explicit formula for $\eta'_\theta$ and show that it is always trivial for our purposes.  Finally, to compute the only relevant summand, we appeal to Lusztig's formula.  (In this case, we do not need to use the generalized form of the formula from~\S\ref{sec:genlusztig}.) 

\subsection{Triviality of $m_{K^0}([\theta])$}
\label{sec:mKt}

\begin{lemma}\label{mKthetaistrivial}  Let $\theta$ be an orthogonal involution of $G$.  Then $\mu (G_\theta) = Z^2$ and $G_\theta = ZG^\theta$.  Consequently, $m_{K^0} ([\theta]) =1$.
\end{lemma}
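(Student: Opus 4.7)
The plan is to work with the explicit form $\theta = \theta_\nu$ for some symmetric $\nu \in G$, compute $\mu(G_\theta)$ and $\mu(Z)$ directly, and then invoke the isomorphism $G_\theta/ZG^\theta \cong \mu(G_\theta)/\mu(Z)$ stated in Section~\ref{sec:background} to conclude $G_\theta = ZG^\theta$.

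First I will observe that for $z = \alpha I \in Z$, a direct computation gives $\theta(z) = \alpha^{-1}I$, hence $\mu(z) = \alpha^2 I$; this yields both $Z \subseteq G_\theta$ and $\mu(Z) = Z^2$. The key step, and the one where the oddness of $n$ enters, is the reverse inclusion $\mu(G_\theta) \subseteq Z^2$. For $g \in G_\theta$ with $\mu(g) = \lambda I$, unpacking the definition of $\theta_\nu$ produces the matrix identity $g\,\nu^{-1}\,{}^tg = \lambda\,\nu^{-1}$, and taking determinants yields $\det(g)^2 = \lambda^n$. Since $n$ is odd, $x \mapsto x^n$ is the identity map on $F^\times/(F^\times)^2$, so $\lambda^n$ being a square forces $\lambda \in (F^\times)^2 = Z^2$. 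Combined with the previous step, $\mu(G_\theta) = \mu(Z)$, and the isomorphism above immediately implies $G_\theta = ZG^\theta$.

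For the final conclusion $m_{K^0}([\theta]) = [G_\theta : (K^0\cap G_\theta) G^\theta] = 1$, it will suffice to verify that $Z \subseteq K^0 \cap G_\theta$. The inclusion $Z \subseteq G_\theta$ is already in hand; for $Z \subseteq K^0$, I will note that $\bZ \subseteq \bZ^0$ since $\bG^0 = Z_\bG(\bZ^0)$ contains $\bZ$, and that $Z$ acts trivially on the reduced building of $\bG$, forcing $Z \subseteq G^0_{[y]} = K^0$. Consequently $(K^0 \cap G_\theta)G^\theta \supseteq ZG^\theta = G_\theta$, which gives the desired index. There is no serious obstacle in the argument; the decisive input is the arithmetic observation that, for $n$ odd, squareness in $F^\times$ is preserved under taking $n$-th powers, which immediately trivializes the similitude character modulo squares.
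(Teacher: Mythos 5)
Your proposal is correct and follows essentially the same route as the paper: the determinant computation $(\det g)^2=\lambda^n$ together with the oddness of $n$ gives $\mu(G_\theta)=Z^2=\mu(Z)$, the isomorphism $G_\theta/ZG^\theta\cong\mu(G_\theta)/\mu(Z)$ then yields $G_\theta=ZG^\theta$, and $Z\subseteq K^0\cap G_\theta$ forces the index to be $1$. The only difference is cosmetic (you cite the background isomorphism directly and spell out why $Z\subseteq K^0$, whereas the paper rederives the isomorphism via an exact sequence and leaves the inclusion implicit).
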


\begin{proof}
Choose a symmetric matrix $\nu\in G$ such that  $\theta = \theta_\nu$.  The similitude ratio defines a homomorphism $\mu : G_\theta \to Z$.  
We have $$\mu (g) = g\theta(g)^{-1} = g\cdot \nu^{-1}\cdot {}^t g \cdot \nu .$$

If $z\in Z$ then $\mu (z) = z^2$ and thus $\mu (G_\theta)\supset \mu (Z) = Z^2$.  
Let us identify $Z$ with $F^\times$ in the obvious way.  
If $g\in G_\theta$ then $\det \mu (g) = (\det g)^2 \in Z^2$. 
On the other hand, if $z\in Z$ then $\det z = z^n \equiv z$ (mod $Z^2$).
So, $Z^2\supset \mu (G_\theta)\supset Z^2$ and hence $\mu (G_\theta) = Z^2$.

The similitude ratio $\mu$ defines an exact sequence
$$1\to G^\theta \to G_\theta \to \mu (G_\theta)\to 1.$$
This yields an exact sequence
$$1\to G^\theta /\{ \pm 1\}  \to G_\theta /Z \to \mu (G_\theta)/Z^2\to 1,$$
since $Z\subset G_\theta$, $G^\theta\cap Z = \{ \pm 1\}$ and $\mu (Z) = Z^2$.
Hence, we have an isomorphism
$$G_\theta/ ZG^\theta \cong \mu (G_\theta)/Z^2.$$
Since $\mu (G_\theta) = Z^2$, we deduce that $G_\theta = ZG^\theta$.

By Theorem \ref{mKTheta}, we have
$$ m_{K^0}([\theta]) = [G_\theta : (K^0 \cap G_\theta)G^\theta ].$$  
But now $Z\subseteq K^0 \cap G_\theta$ implies
$$[G_\theta : (K^0 \cap G_\theta)G^\theta ]\le   [G_\theta : ZG^\theta ] =1.$$  Therefore, $m_K(\Theta') =1$.
\end{proof}

\subsection{Relevant Involutions}
\label{sec:reduction}
Suppose $\theta$ is an involution of $G$ such that 
$$\langle[\theta],[\Psi]\rangle_{K^0} = \dim {\rm Hom}_{K^{0,\theta}} (\rho' , \eta'_\theta)\neq 0,$$
where $K^0$, $\rho'$ and $\eta'_\theta$ are associated to $\Psi$ as in \cite{HM}.
(Only orbits $[\theta]$ with this property can contribute to the formula for $\langle\Theta,\Psi\rangle_G$ in Theorem~\ref{thm:K^0_formula}.)  Then we must have $[\theta]\sim[\Psi]$, that is, $\theta(K^0) = K^0$, and the character $\phi$ of $K^0$ given by $\phi (g) = \prod_{i=0}^d \phi_i(g)$ restricts trivially to $K^{0,\theta}_+$.  In particular, by Lemma~\ref{lem:K^0}, $\theta$ must stabilize $\bG^0$ and $[y]$.

Let $\bT$ be the $F$-torus in $\bG$ such that $T = \iota (E^\times)$.
Then $\bT$ can be taken to be the torus appearing in Definition~\ref{cuspidalGdatum}. 
We want to show that there always exists a $\theta$-split maximal $F$-torus $\bT'$ of $\bG$ with the properties in
Definition~\ref{cuspidalGdatum}.  By Lemma~\ref{thetastabisthetasplit}, it suffices to show that there is a $\theta$-stable torus $\bT'$ with these properties.

If $\Psi$ is toral, then $\theta$ stabilizes $T=K^0$, and we are done.
So suppose $\Psi$ is nontoral.  
Then there exists an $\f$-group $\mathsf{G}_y^0$ such that $\mathsf{G}_y^0 (\f) = G^0_{y,0:0^+}\cong \GL_{n_0}(\f_{E_0})$.
Let $\mathsf{T}$ be the $\f$-torus in $\mathsf{G}_y^0$ determined by $T$.  (See the Appendix.)  Thus $\mathsf{T} (\f) = T_{0:0^+}$.
 The character $\varphi_{-1} |\gO^\times_E$ projects to a character $\lambda$ of $\mathsf{T}(\f)$.
 
Recall that the Deligne-Lusztig virtual representation $R^\lambda_{\mathsf{T}}$ of $\mathsf{G}_y^0(\f)$ associated to $(\mathsf{T},\lambda)$ is an irreducible cuspidal representation that corresponds to the representation $\rho^\circ$ of $G^0_{y,0}$.
In addition, $\rho$ is the representation of $K^0$ that restricts to $\rho^\circ$ on $G^0_{y,0}$ and acts according to $\rho (\m{\varpi_{E_0}}) = \varphi_{-1} (\varpi_{E_0})$ for any prime element $\varpi_{E_0}$ in $E_0$.

We note that $K^{0,\theta} = G^{0,\theta}_{y,0}$.
The involution $\theta$ determines an involution of $\mathsf{G}_y^0$ that we also denote by $\theta$.  It follows from Proposition 2.12 \cite{HM} that the group of fixed points of $\theta$ in $\mathsf{G}_y^0(\f)$ is the same as the image of $G^{0,\theta}_{y,0}$ in $\mathsf{G}_y^0(\f)$.  Moreover, if we identify $\mathsf{G}_y^0(\f)$ with $\GL_{n_0}(\f_{E_0})$, then there exists an $\f_{E_0}$-involution $\theta_0$ of $\GL_{n_0}$ such that $\theta_0$ coincides with $\theta$ on $\GL_{n_0}(\f_{E_0})$ under this identification.  Observe that $\theta$, and hence $\theta_0$, are nontrivial on the center of $\GL_{n_0}(\f_{E_0})$.  It follows that $\theta_0$ is an outer involution of $\GL_{n_0}$ and thus that $\mathsf{G}_y^{0,\theta}\cong\On_{n_0}$.

Recall that $\rho' = \rho\otimes \phi$ and $\eta'_\theta = \eta\otimes\phi$.  
Note that $\eta_\theta (g) = \phi (g) \eta'_\theta (g)$ defines a character of exponent two of $\sf{G}^0_y (\f)$.
We therefore have
$$\langle [\theta],[\Psi]\rangle_{K^0} = 
 \dim {\rm Hom}_{K^{0,\theta}} (\rho , \eta_\theta)
 = \dim {\rm Hom}_{\mathsf{G}_y^0(\f)^\theta} ( R^\lambda_{\mathsf{T}(\f)}, \eta_\theta).$$
Thus Theorem \ref{Lusztigwithchi}
now implies $$\langle [\theta],[\Psi]\rangle_{K^0}
=\sigma (\mathsf{T}) \sum_{\gamma \in \mathsf{T}(\f)\bs \Xi_{\mathsf{T},\lambda,\eta_\theta}/\mathsf{G}_y^0(\f)^\theta} \sigma \left(Z_{\mathsf{G}_y^0}\left( (\gamma^{-1}\mathsf{T} \gamma \cap \mathsf{G}_y^{0,\theta})^\circ\right)\right).$$

Since $\langle [\theta],[\Psi]\rangle_{K^0}$ is nonzero,
by the definition of $\Xi_{\mathsf{T},\lambda,\eta_\theta}$, we see that there exists $\gamma\in  \mathsf{G}_y^0(\f)$ such that 
$(\gamma\cdot \theta) (\mathsf{T}) = \mathsf{T}$ and the summand above associated to $\gamma$ is nonzero (as are all the summands).

Suppose that $g\in G^0_{y,0}$ projects to $\gamma$.  Then $[g\cdot\theta]= [\theta]$.
Therefore, there is no essential loss in generality in replacing
$g\cdot \theta$ by $\theta$.  In other words, we may assume $g=1$ and
therefore $\theta (\mathsf{T}) = \mathsf{T}$.

\begin{lemma}
\label{lem:lift-tori}
Assuming $\theta(\mathsf{T}) = \mathsf{T}$, there exists a $\theta$-stable elliptic maximal $F$-torus $\bT'$ of
$\bG^0$ such that
\begin{enumerate}
\item $y\in A(\bG^0,\bT',F)$.
\item The image of $T'\cap G^0_{y,0}$ in $\mathsf G^0_y(\f)$ is $\mathsf T(\f)$.
\item $\bT$ and $\bT'$ are conjugate in $G^0_{y,0^+}$.
\end{enumerate}
\end{lemma}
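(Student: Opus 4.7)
The plan is to produce $\bT'$ as the conjugate $\Int(g)(\bT)$ for a carefully chosen $g\in G^0_{y,0^+}$. The payoff is that properties (1)--(3) and ellipticity are automatic for \emph{any} such conjugate: since $g$ fixes $y$ and carries $A(\bG^0,\bT,F)$ onto $A(\bG^0,\bT',F)$, property (1) holds; since $g$ has trivial image in $\mathsf G^0_y(\f) = G^0_{y,0}/G^0_{y,0^+}$, conjugation by $g$ does not move the image of $T\cap G^0_{y,0}$, which is $\mathsf T(\f)$, giving (2); (3) is immediate; and ellipticity is preserved under $G^0$-conjugation. So the only nontrivial content of the lemma is arranging $\theta(\bT')=\bT'$.

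To this end, adapting the Weyl-group argument in the proof of Lemma \ref{lem:weyl} to $\bG^0$ and $\bT$ gives $N_{G^0}(\bT)\cap G^0_{y,0^+} = T\cap G^0_{y,0^+}$. The set $\mathcal{M}$ of $G^0_{y,0^+}$-conjugates of $\bT$ is therefore in bijection with the pro-$p$ coset space $G^0_{y,0^+}/(T\cap G^0_{y,0^+})$, and it coincides with the set of all maximal $F$-tori of $\bG^0$ whose reduction in $\mathsf G^0_y$ is $\mathsf T$ (this last identification is already used within the proof of Lemma \ref{lem:weyl}). The hypothesis $\theta(\mathsf T)=\mathsf T$, applied to the torus $\theta(\bT)$, yields some $h\in G^0_{y,0^+}$ with $\theta(\bT) = \Int(h)(\bT)$. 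Thus $\theta$ permutes $\mathcal{M}$, and finding a $\theta$-fixed element comes down to solving
\[ g^{-1}\theta(g)\, h \ \in\ T\cap G^0_{y,0^+} \]
for some $g\in G^0_{y,0^+}$.

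This last equation is the heart of the argument; it is a nonabelian lifting problem for the order-$2$ action of $\theta$ on the pro-$p$ group $G^0_{y,0^+}$. Because the residue characteristic $p$ is odd, such problems admit solutions by successive approximation along the Moy--Prasad filtration, in the spirit of Proposition 2.12 of \cite{HM}: at each depth $r>0$ one is reduced to solving a linear equation of the form $(1+\theta)x = y$ on the $\f$-vector space $G^0_{y,r:r^+}$, which is possible because $2$ is invertible in $\f$. The main obstacle is keeping track of the correction by $T\cap G^0_{y,0^+}$ across the filtration induction; once this bookkeeping is in place the required $g$ is produced, and the three asserted properties hold by the opening paragraph.
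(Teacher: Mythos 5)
Your reduction of the lemma to the single statement ``$\theta(\bT)$ is conjugate to $\bT$ by an element of $G^0_{y,0^+}$'' is a reasonable framing, but the justification you give for that statement is where the argument breaks, and it is precisely the point the paper spends its appendix on. You assert that the $G^0_{y,0^+}$-conjugates of $\bT$ exhaust all maximal $F$-tori with reduction $\mathsf T$, ``as already used within the proof of Lemma~\ref{lem:weyl}.'' What that proof actually uses is the analogous statement over the splitting field $E'$: two $E'$-split tori through $y$ with the same reduction are conjugate by an element of $\bG^0(E')_{y,0^+}$. Applying this to $\bT$ and $\theta(\bT)$ only produces a conjugator with coefficients in $E'$; descending to an $F$-rational conjugator in $G^0_{y,0^+}$ is a genuine rigidity statement. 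For \emph{unramified} maximal tori it is DeBacker's parametrization (Lemma~\ref{lem:lifting-tori}(2)), but here $\bT\cong R_{E/F}\GL_1$ is unramified over $F$ only when $E/F$ is; in general it is unramified only after passing to $E_0$. The paper's proof of Lemma~\ref{lem:lift-tori} exists exactly to bridge this: it rewrites $\bG^0=R_{E_0/F}\GL_{n_0}$ and $\bT=R_{E_0/F}\bS$ with $\bS$ unramified over $E_0$, uses the oddness of $[E_0:F]$ to see that $\theta$ acts factorwise and hence induces an $E_0$-involution, invokes the Adler--DeBacker identification of buildings through the maximal unramified subextension (Proposition~\ref{prop:tori-lifting}), and only then applies DeBacker's theory together with the fixed-point argument of Lemma~\ref{lem:stable-tori}. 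None of this machinery is replaced by anything in your sketch, so the existence of $h\in G^0_{y,0^+}$ with $\theta(\bT)=\Int(h)(\bT)$ is assumed rather than proved.

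A secondary problem is the approximation step itself. The equation $(1+\theta)x=y$ on $G^0_{y,r:r^+}$ is \emph{not} solvable for arbitrary $y$ even with $2$ invertible: the image of $1+\theta$ is the $+1$-eigenspace, so solvability at each depth requires knowing that the error term lies in the correct eigenspace modulo the image of $T\cap G^0_{y,r}$. That constraint is not bookkeeping; it is the substance of the step, and it has to be extracted from $\theta^2=1$ together with the identity $N_{G^0}(\bT)\cap G^0_{y,0^+}=T\cap G^0_{y,0^+}$ (which you also only assert by analogy with Lemma~\ref{lem:weyl}). This part could in principle be repaired, but note that the paper avoids the cocycle computation altogether: in Lemma~\ref{lem:stable-tori} it observes that at each filtration level the relevant orbit set has $p$-power, hence odd, cardinality, so the order-two action of $\theta$ fixes an orbit, and a compactness argument on the nested orbits produces the $\theta$-stable torus directly. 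So even granting your two unproved inputs, your route is more delicate than necessary; without them, the proof has a genuine gap.
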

\begin{proof}
Let $\bH = \GL_{n_0}$.  The group $\bG^0$ is isomorphic to the group
$R_{E_0/F}\bH$ obtained from $\bH$ via restriction of scalars from
$E_0$ to $F$.  As discussed in~\S\ref{sec:background}, over an algebraic closure $\overline F$ of $F$,
$$R_{E_0/F}\bH\cong\prod_{\sigma\in\Sigma}\bH,$$
where $\Sigma$ is the set of $F$-embeddings of $E_0$ in $\overline F$.  Moreover, the identification of the
$F$-group $\bG^0$ and $\prod_{\sigma\in\Sigma} \bH$ (together with
the above action of $\gal (\overline F/F)$) determines an identification of
$\cB (\bG^0,F)$ with $\cB (\bH,E_0)$.

It is easily checked that since
$[E_0 : F]$ is odd and $\theta$ is defined over $F$, $\theta$ must
stabilize each factor in the above decomposition of $\bG^0$.  Thus,
for each $\sigma\in\Sigma$, $\theta$ determines an $E_0$-automorphism
$\theta_\sigma$ of $\bH$.  In fact, $\theta_\sigma =
{}^\sigma\theta_e$, where $e\in\Sigma$ is the identity
embedding, and $^\sigma\theta_e$ is the map $x\mapsto \sigma(\theta_e(\sigma^{-1}(x)))$.

Recall that $\bT\cong R_{E/F}\GL_1 = R_{E_0/F}(R_{E/E_0}\GL_1)$.  In fact, this isomorphism is compatible with the identification of $\bG^0$ with $R_{E_0/F}\bH$ in the sense that $\bT$ can be identified with $R_{E_0/F}\bS$, where $\bS\cong R_{E/E_0}\GL_1$ is a unramified elliptic maximal $E_0$ torus of $\bH$.  The existence of a torus $\bT'$ with the above-stated properties now follows immediately from Proposition~\ref{prop:tori-lifting}. 
\end{proof}

We have thus demonstrated the following result.
\begin{proposition}\label{strongsymmetrylemma}
Suppose $\theta$ is an involution of $G$ and $\Psi$ is a cuspidal $G$-datum such that $\langle [\theta],[\Psi]\rangle_{K^0}\ne 0$.  Then there is a $\theta$-split maximal $F$-torus $\bT$ of $\bG$ with the properties given in Definition \ref{cuspidalGdatum}.
\end{proposition}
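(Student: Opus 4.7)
The plan is to consolidate the analysis in Section~\ref{sec:reduction} preceding the statement into a clean argument, split according to whether $\Psi$ is toral or nontoral.

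First observe that since $\langle[\theta],[\Psi]\rangle_{K^0}\neq 0$ we have $[\theta]\sim[\Psi]$, so Lemma~\ref{lem:K^0} tells us that $\theta$ stabilizes $\bG^0$ and $[y]$. Let $\bT$ be the torus from the Howe datum, with $T=\iota(E^\times)$. In the toral case $E_0=E$, so $\bG^0=\bT$; the $\theta$-stability of $\bG^0$ then gives $\theta(\bT)=\bT$, and since $[E:F]=n$ is odd, Lemma~\ref{thetastabisthetasplit} promotes this to $\bT$ being $\theta$-split. As $\bT$ already plays the role of the torus in Definition~\ref{cuspidalGdatum} for $\Psi$, this case is settled.

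Assume from now on that $\Psi$ is nontoral. I would use the factorizations $\rho'=\rho\otimes(\phi|K^0)$ and $\eta'_\theta=\eta_\theta\otimes(\phi|K^{0,\theta})$ (with $\phi$ of exponent compatible with the restriction to the fixed-point subgroup) together with the fact that $\rho|G^0_{y,0}$ descends to the Deligne--Lusztig cuspidal representation $R^{\lambda}_{\mathsf T}$ of $\mathsf G^0_y(\f)$, where $\mathsf T$ is the $\f$-torus determined by $T$, to rewrite
\[
\langle[\theta],[\Psi]\rangle_{K^0}=\dim\Hom_{\mathsf G^0_y(\f)^\theta}\bigl(R^{\lambda}_{\mathsf T},\eta_\theta\bigr).
\]
Applying Theorem~\ref{Lusztigwithchi} to $(\mathsf G^0_y,\mathsf T,\lambda,\eta_\theta)$, the nonvanishing of this sum forces some $\gamma\in\mathsf G^0_y(\f)$ with $(\gamma\cdot\theta)(\mathsf T)=\mathsf T$. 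Lifting $\gamma$ to $g\in G^0_{y,0}$ and replacing $\theta$ by $g\cdot\theta$ is harmless, since $[g\cdot\theta]=[\theta]$ and the pairing depends only on the $K^0$-orbit, so I may assume outright that $\theta(\mathsf T)=\mathsf T$.

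At this point Lemma~\ref{lem:lift-tori} produces a $\theta$-stable elliptic maximal $F$-torus $\bT'$ of $\bG^0$ with $y\in A(\bG^0,\bT',F)$ and conjugate to $\bT$ by an element of $G^0_{y,0^+}$. The $G^0_{y,0^+}$-conjugacy guarantees that $\bT'$ splits over the same tame Galois extension $E'$ as $\bT$ and is elliptic in $\bG$, so $\bT'$ meets clause D2 of Definition~\ref{cuspidalGdatum}; the other clauses of the datum depend only on $[y]$, $\rho$, and $\vec\phi$, which are unchanged. Finally, Lemma~\ref{thetastabisthetasplit} applied to $\bT'\subset\GL_n$ upgrades $\theta$-stability to $\theta$-splitness, producing the torus required by the proposition.

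The main obstacle is the nontoral case: one must actually promote the $\f$-level identity $(\gamma\cdot\theta)(\mathsf T)=\mathsf T$ to an honest $\theta$-stable $F$-torus in $\bG^0$ lying over $y$ and equivalent to the original torus of $\Psi$. That task is handled exactly by Lemma~\ref{lem:lift-tori}, whose hypothesis is secured by the twist-by-$g$ step; after that, oddness of $n$ and Lemma~\ref{thetastabisthetasplit} do the rest automatically.
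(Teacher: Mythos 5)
Your proposal is correct and follows essentially the same route as the paper's own argument in \S\ref{sec:reduction}: reduce via Lemma~\ref{lem:K^0} and the toral/nontoral dichotomy, pass to $\mathsf G^0_y(\f)$ and invoke Theorem~\ref{Lusztigwithchi} to produce $\gamma$ with $(\gamma\cdot\theta)(\mathsf T)=\mathsf T$, replace $\theta$ by $g\cdot\theta$ within its $K^0$-orbit, lift via Lemma~\ref{lem:lift-tori}, and finish with Lemma~\ref{thetastabisthetasplit}. The only difference is that you spell out explicitly why the lifted torus $\bT'$ still satisfies condition D2, which the paper leaves implicit.
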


\subsection{Triviality of $\eta'_\theta$}
\label{sec:etriv}

In this section, we establish that the character $\eta'_\theta$ in the application of the theory of \cite{HM} to $(\GL_n,{\rm O}_n)$ is trivial, when $n$ is odd.

Assume $\theta$ is an orthogonal involution of $G = \GL_n(F)$, where $n$ is odd.  Let $\Psi = (\bG ,y,\rho,\vec\phi)$ be a cuspidal $G$-datum.  Let $\Phi = (\varphi , E, \{ \varphi_i\}, \{ E_i\},\iota : E\hookrightarrow M(n,F))$ be an associated Howe datum.  Let $\bT$ be the elliptic maximal $F$-torus of $\bG$ such that $\bT(F) = \iota (E^\times)$.  We may assume that $\bT$ is $\theta$-split by Proposition~\ref{strongsymmetrylemma}.

\begin{proposition}\label{etaprimeistrivial}
The character $\eta'_\theta$ is trivial.
\end{proposition}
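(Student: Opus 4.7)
The character $\eta'_\theta$ is defined in \S5.6 of \cite{HM} as an explicit product of quadratic characters, one for each level $i\in\{0,\ldots,d\}$ of the twisted Levi sequence $\vec\bG$, each built from Lusztig-type signs capturing the $\theta$-action on the relevant root or eigenspace data. The approach is to show that each factor is trivial, exploiting the fact that by Proposition~\ref{strongsymmetrylemma} we may assume the elliptic torus $\bT$ appearing in $\Psi$ is $\theta$-split.

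Once $\bT$ is $\theta$-split, every subtorus $\bZ^i$ is also $\theta$-split, and $\theta$ acts on each root system $\Phi(\bG^i,\bT)$ by $\alpha\mapsto -\alpha$. For $i\ge 1$, the level-$i$ factor of $\eta'_\theta$ in \cite{HM} is a product of signs over the roots in $\Phi(\bG^i,\bT)\setminus\Phi(\bG^{i-1},\bT)$, twisted by the restriction of $\phi_i$. Because we are in type $A$, no root is its own negative, so this root set partitions into genuine $\theta$-pairs $\{\alpha,-\alpha\}$; within each pair the two sign contributions cancel, and the Yu-type twist by $\phi|K^0_+$ absorbs the remaining $\phi_i$-contribution on the pro-$p$ part. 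Hence the level-$i$ factor for $i\ge 1$ is trivial.

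The genuine content is the depth-zero factor. There $\eta'_\theta$ restricts to a character of $K^{0,\theta}/K^{0,\theta}_+$, which by Proposition~2.12 of \cite{HM} injects into $\mathsf G^{0,\theta}_y(\f_{E_0})\cong \On_{n_0}(\f_{E_0})$. The \cite{HM} formula for this factor is built from $\sigma(\mathsf T)$ and $\sigma(Z_{\mathsf G^0_y}((\mathsf T\cap\mathsf G^{0,\theta}_y)^\circ))$. Since $\mathsf T$ is $\theta$-split, $(\mathsf T\cap\mathsf G^{0,\theta}_y)^\circ$ is trivial, so the latter centralizer is all of $\mathsf G^0_y$, and both signs coincide with $\sigma(\GL_{n_0,\f_{E_0}})$-type quantities over $\f_{E_0}$. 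With $n_0$ odd (since $n$ is odd and $n_0\mid n$), one checks that these Lusztig ranks have the same parity and the product of signs is $+1$; combining this with the corresponding $\phi$-twist on the finite group shows the depth-zero factor is also trivial.

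The main obstacle is the depth-zero step: the higher-level factors are handled uniformly by the $\alpha\leftrightarrow -\alpha$ pairing on a $\theta$-split torus, whereas at depth zero one must balance Lusztig $\sigma$-signs on the (nontrivial) odd orthogonal group $\On_{n_0}(\f_{E_0})$. The oddness of $n_0$ is essential, since it forces the relevant $\f$-ranks to match in parity and prevents the determinant character of $\On_{n_0}$ from sneaking in as a nontrivial contribution.
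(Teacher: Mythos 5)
There is a genuine gap, and it starts with the identification of the object itself. In \cite{HM} (and in \S\ref{sec:etriv}) the character is $\eta'_\theta(k)=\prod_{i=0}^{d-1}\det\bigl(\Int(k)|W_i^+\bigr)^{(p-1)/2}$, where $W_i^+=J^{i+1,\theta}/J_+^{i+1,\theta}$; each factor is a determinant-of-conjugation quadratic character of $K^{0,\theta}$ on a pro-$p$ quotient coming from the step $\bG^i\subset\bG^{i+1}$. There is no ``depth-zero factor'' built out of Lusztig's $\sigma$-signs: those signs (and the quantities $\sigma(\mathsf T)$, $\sigma(Z_{\mathsf G^0_y}((\mathsf T\cap\mathsf G^{0,\theta}_y)^\circ))$ you invoke) belong to the multiplicity formula of Theorem~\ref{Lusztigwithchi}, not to the definition of $\eta'_\theta$. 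Your proposed treatment of the ``higher-level'' factors also does not go through: passing to the Lie algebra, the relevant space is the $\theta$-fixed subspace $\fW_i^+=\bigoplus_{a\in(\Phi-\Phi')^+}(\bfr g_a+\bfr g_{-a})^\theta$, which takes exactly one line from each pair $\{a,-a\}$, so there is no pairwise cancellation of contributions from $a$ and $-a$; moreover a general $k\in K^{0,\theta}$ does not preserve individual root spaces, so the cancellation is not even well formulated. The phrase about the ``Yu-type twist by $\phi|K^0_+$ absorbing the $\phi_i$-contribution'' does not correspond to any step in the actual definition.

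The real content, which the proposal is missing, is the following. Each factor $\delta(k)=\det_{\f}\bigl(\Ad(k)|\fW_i^+\bigr)$ is a character of $K^{0,\theta}$ trivial on the pro-$p$ group $K^{0,\theta}_+$ and on $-1$, hence (using that $n_0$ is odd, so $\On_{n_0}=\{\pm1\}\times\SOn_{n_0}$) it descends to a character of $\SOn_{n_0}(\f_{E_0})$. This group is \emph{not} perfect -- the spinor norm gives a nontrivial quadratic character -- so triviality cannot be deduced from oddness of $n_0$ or from any parity-of-rank bookkeeping; a priori $\delta$ could be the spinor norm character. The paper's proof rules this out by an extension-of-scalars argument: replacing $F$ by the unramified quadratic extension $F'$, the same determinant formula defines a character $\delta'$ of $\SOn_{n_0}(\f'_{E_0})$ extending $\delta$ (because $\fW^+(F')=\fW^+\otimes_{\f}\f'$), and $\SOn_{n_0}(\f_{E_0})$ lies in the commutator subgroup of $\SOn_{n_0}(\f'_{E_0})$ since the values $Q(v)\in\f_{E_0}^\times$ of the reflections generating it become squares in $\f_{E_0}'^\times$ (Lemma~\ref{lem:spinor}). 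Hence $\delta'$, and therefore $\delta$, is trivial on $\SOn_{n_0}(\f_{E_0})$. Without this step (or some substitute argument killing the possible spinor-norm character), the proposed proof does not establish the proposition.
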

In the toral case, this follows immediately from the fact that $K^{0,\theta}= \{ \pm 1\}$ lies in the center of $G$.

In general, the character $\eta'_\theta$ of $K^{0,\theta}$ has an expression
$$\eta'_\theta (k) = \prod_{i=0}^{d-1} \chi^{{\mathcal M}_i}(f'_i (k))$$
in the notation of~\cite{HM}.
Here the $i$th factor is given explicitly as
$$\det ({\rm Int}(k)|W_i^+)^{(p-1)/2},$$
where
 $$W^+_i = J^{i+1,\theta}/J_+^{i+1,\theta},$$
 and $J_+^{i+1}$ is a certain subgroup of finite index in $J^{i+1}$.  (See~\S3.1 in~\cite{HM}.)
In the above determinant, we are viewing $W^+_i$ as an $\f^*$-vector space, where $\f^*$ is the field of prime order contained in $\f$.  
We will show that each of the factors in the definition of $\eta'_\theta$ is trivial.  

It is more convenient to work on the Lie algebra $\g$.  The groups $J^{i+1}$ and $J^{i+1}_+$ have obvious analogues $\fJ^{i+1}$ and $\fJ^{i+1}_+$ in the Lie algebra $\g$, and it is easily seen that
$$\det ({\rm Int}(k)|W_i^+)^{(p-1)/2} = \det ({\rm Ad}(k)|\fW_i^+)^{(p-1)/2},$$
where $$\fW^+_i = \fJ^{i+1,\theta}/\fJ_+^{i+1,\theta}.$$
As above, we view $\fW^+_i$ as an $\f^*$-vector space.  In fact, the $\f^*$-linear structure on $\fW_i^+$ extends naturally to an $\f$-linear structure.
Moreover, $\Ad (k)$ is $\f$-linear.  According to a classical ``transitivity of norms'' formula (see \S7.4 in \cite{J}), we have
$$\det\nolimits_{\f^*} (\Ad (k)|\fW_i^+ ) = N_{\f/\f^*} \left( \det\nolimits_{\f}(\Ad (k)|\fW_i^+)\right).$$
To establish that  $\eta'_\theta$ is trivial, we will show that for all $i$ and for all $k\in K^{0,\theta}$, the determinant
$\det\nolimits_{\f}(\Ad (k)|\fW_i^+)$ is trivial.

\subsubsection{Some notations}

There is no loss of generality in assuming that $\bG =\bG^{i+1}$ and doing so will allow us to simplify our notations.  In particular, we take $\bG' = \bG^i$ and routinely drop subscripts and superscripts involving $i$ by using notations such as
 $\Phi = \Phi (\bG,\bT)\cup\{0\}$ and $\Phi' = \Phi (\bG',\bT)\cup\{ 0\}$.  

Note that the fact that $\bT$ is $\theta$-split implies that $\theta a = -a$, for all $a\in \Phi$.  Let $(\Phi - \Phi')^+$ be any set of representatives for the various pairs $\{ a,-a\}$ as $a$ ranges over $\Phi -\Phi'$.  For each $a\in \Phi - \Phi'$, we have the 1-dimensional space
$$\bfr g_a^\theta = (\bfr g_a + \bfr g_{-a})^\theta.$$
For any extension $K$ of $F$ contained in $\overline F$, 
let
\begin{eqnarray*}
\fW (K)&=& \bigoplus_{a\in \Phi - \Phi'} \bfr g_a(K)_{y,s:s^+},\\
\fW^+ (K)&=& \bigoplus_{a\in ( \Phi -\Phi')^+} \bfr g^\theta_a(K)_{y,s:s^+}.
\end{eqnarray*}
Let $\dot E/F$ denote the Galois closure of $E/F$ in $\overline F$.  
Then $\fW$ and $\fW^+$ are the spaces of ${\rm Gal}(\dot E/F)$-fixed points in $\fW(\dot E)$ and $\fW^+(\dot E)$, respectively.

\subsubsection{The structure of the proof}
Let $\delta : K^{0,\theta} \to \f^\times$ be the map
$$k\mapsto \det\nolimits_{\f} (\Ad (k) | \fW^+).$$
To show $\delta$ is trivial, first observe that it is trivial on $K^{0,\theta}_+$.  Abbreviate $\f_{E_0}$ by $\f_0$.  In~\S\ref{sec:reduction}, we observed that 
$$K^{0,\theta}/K^{0,\theta}_+\cong \On_{n_0} (\f_0).$$
Note that $\delta$ must be trivial on the negative of the identity matrix.  Thus it suffices to show that $\delta$ is trivial as a homomorphism
$\SOn_{n_0}(\f_0) \to \f^\times$.

Our basic strategy can now be described as follows.  Let $F'$ be a (unique up to isomorphism) unramified quadratic extension of $F$.  Let  $\f' = \f_{F'}$ and let $\f'_0 = \f_{F'E_0}$.  Taking $F'$-rational points, we show that $\delta$ has a natural extension to a homomorphism
$$\delta' : \SOn_{n_0}( \f'_0) \to (\f')^\times .$$
Then the triviality of $\delta$ follows from the fact (shown below) that $\SOn_{n_0}( \f_0)$ is contained in the commutator subgroup of $\SOn_{n_0}(\f_0')$.

\subsubsection{The spinor norm}
Let $p$ be an odd prime and let $\F_p$ denote the field of order $p$.
As in~\S \ref{sec:genlusztig}, for any power $q$ of $p$, let $\F_q$ denote the finite field of order $q$ (inside a fixed algebraic closure of $\F_p$).  Let $\nu : \On(n_0, \F_q)\to \F_q^\times /(\F_q^\times)^2$ be the spinor norm.  
Recall that an element of $\On(n_0, \F_q)$ lies in the kernel of $\nu$ precisely if it can be expressed as a product of reflections $r_{v_1}\cdots r_{v_m}$ through anisotropic vectors $v_1,\ldots ,v_m\in\F_q^{n_0}$ such that 
$$Q(v_1)\cdots Q(v_m)\in(\F_q^\times)^2,$$
where $Q$ is the quadratic form on $\F_q^{n_0}$ that is used to define $\SOn_{n_0}$.
It is well known that the commutator subgroup of $\SOn_{n_0}(\F_q)$ is the group $B_k (q)$
consisting of the elements in the kernel of $\nu$ that also lie in $\SOn_{n_0}(\F_q)$, where
 $k = (n_0 -1)/2$.
The group $B_k (q)$ is also the commutator subgroup of $\On_{n_0}(\F_q)$ and it has index two in $\SOn_{n_0}(\F_q)$.
 
 \begin{lemma}
 \label{lem:spinor}
For any power $q$ of $p$, $\SOn_{n_0}(\F_q)$ is contained in the commutator subgroup $B_k (q^2)$ of $\SOn_{n_0}(\F_{q^2})$.
 \end{lemma}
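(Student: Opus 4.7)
The plan is to exploit the fact that $\F_q^\times$ lies inside the squares $(\F_{q^2}^\times)^2$ when $q$ is odd, and then use the Cartan--Dieudonn\'e theorem to decompose elements of $\SOn_{n_0}(\F_q)$ into reflections through vectors defined over $\F_q$ itself. The key observation is that the characterization of $B_k(q^2)$ in the preceding paragraph is stated in terms of products of reflections: an element of $\SOn_{n_0}(\F_{q^2})$ lies in $B_k(q^2)$ precisely when it can be written as $r_{v_1}\cdots r_{v_m}$ with $Q(v_1)\cdots Q(v_m)\in (\F_{q^2}^\times)^2$. So the whole proof reduces to showing that the spinor norm, computed over $\F_{q^2}$, of an element of $\SOn_{n_0}(\F_q)$ is always trivial.

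First I would verify the field-theoretic fact that $\F_q^\times \subset (\F_{q^2}^\times)^2$. Since $\F_{q^2}^\times$ is cyclic of order $q^2-1=(q-1)(q+1)$, and $\F_q^\times$ is the unique subgroup of order $q-1$, it is generated by $g^{q+1}$ for a generator $g$ of $\F_{q^2}^\times$. Since $p$ is odd, $q$ is odd, so $q+1$ is even and $g^{q+1}=(g^{(q+1)/2})^2$ is a square. Hence every element of $\F_q^\times$ is a square in $\F_{q^2}^\times$.

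Next, given $g \in \SOn_{n_0}(\F_q)$, I apply the Cartan--Dieudonn\'e theorem over the field $\F_q$ to write $g = r_{v_1}\cdots r_{v_m}$ for anisotropic vectors $v_1,\dots,v_m \in \F_q^{n_0}$. Viewing this decomposition inside $\On_{n_0}(\F_{q^2})$ via the inclusion $\F_q \subset \F_{q^2}$, I compute the spinor norm of $g$ (now regarded as an element of $\SOn_{n_0}(\F_{q^2})$) as the class of the product $Q(v_1)\cdots Q(v_m)$ in $\F_{q^2}^\times/(\F_{q^2}^\times)^2$. Since each $Q(v_i)\in \F_q^\times$, the product lies in $\F_q^\times$, which by the first step is contained in $(\F_{q^2}^\times)^2$. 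Therefore the $\F_{q^2}$-spinor norm of $g$ is trivial, so $g\in B_k(q^2)$, completing the proof.

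There is no real obstacle here beyond assembling these standard ingredients; the only thing to be careful about is that the spinor norm must be computed with respect to $\F_{q^2}$, not $\F_q$, so that the relevant notion of ``square'' is enlarged. Indeed, over $\F_q$ itself the product $Q(v_1)\cdots Q(v_m)$ need not be a square, which is why the inclusion of $\SOn_{n_0}(\F_q)$ in its own commutator subgroup fails in general; the passage to $\F_{q^2}$ is exactly what kills the spinor norm. This extension argument is the crucial conceptual step and it drives the strategy used in the surrounding sections to show $\delta' $ is trivial.
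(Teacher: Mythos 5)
Your proof is correct and is essentially the same as the paper's: both decompose $g\in\SOn_{n_0}(\F_q)$ into reflections through anisotropic vectors over $\F_q$ and observe that $Q(v_i)\in\F_q^\times\subset(\F_{q^2}^\times)^2$, so the spinor norm computed over $\F_{q^2}$ is trivial. Your explicit verification that $\F_q^\times$ consists of squares in $\F_{q^2}^\times$ (via the cyclic structure and $q+1$ being even) is a detail the paper leaves implicit, but the argument is identical in substance.
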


\begin{proof}
Given $g\in \SOn_{n_0}(F_q)$, we can write $g = r_{v_1}\cdots r_{v_{m}}$, where each $v_i$ in $\F_q^{n_0}$ is anisotropic with respect to the quadratic form defining $\SOn_{n_0}$, and $r_{v_i}$ is the associated reflection.

Let $\widetilde Q$ be the obvious extension of the above quadratic form to $\F_{q^2}^{n_0}$.  Then $\widetilde Q(v_i)\in \F_q^\times \subset ( \F_{q^2}^\times )^2$.  Therefore, if $\nu'$ is the spinor norm on $\On_{n_0}(\F_{q^2})$ then $\nu' (g) =1$.
\end{proof}

A general reference for the material in this section is \cite{Lm}.

\subsubsection{Extension of scalars}

Let $\varepsilon$ be a unit in $F$ whose image in the residue field $\f$ generates $\f^\times$.  Let $F' = F [\sqrt{\varepsilon}]$ and $E' = \dot E [\sqrt{\varepsilon}]$.  Then $E'/\dot E$ and $F'/F$ are unramified quadratic extensions and $E' = \dot EF'$.
Note that restriction from $E'$ to $\dot E$ defines an isomorphism 
$${\rm Gal}(E'/F')\cong {\rm Gal}(\dot E/F)$$
whose inverse is $$\alpha\mapsto (x+y\sqrt{\varepsilon}\mapsto \alpha (x) +\alpha (y)\sqrt{\varepsilon}).$$
Note that $\fW (F')$ and $\fW^+ (F')$ are the spaces of ${\rm Gal}(E'/F')$-fixed points in 
$\fW (E')$ and $\fW^+ (E')$, respectively.
All of these spaces may be regarded as $\f'$-vector spaces and we have
\begin{eqnarray*}
\fW (F')&=& \fW \otimes_{\f} \f',\\
\fW^+ (F')&=&\fW^+ \otimes_{\f} \f'.
\end{eqnarray*}

Let  $$K^0 (F') = \bG^0(F')_{[y]}, \quad K^0(F')_+ = \bG^0 (F')_{y,0^+}.$$  
Then $$K^0(F')^\theta = \bG^0(F')_{y,0}^\theta, \quad K^0(F')_+^\theta = \bG^0 (F')^\theta_{y,0^+}.$$
By the discussion in~\S\ref{sec:reduction}, we have $$K^0(F')^\theta /K^0 (F')^\theta_+ = \bG^0(F')^\theta_{y,0:0^+} \cong \On_{n_0}(\f_0').$$

For $k\in K^0 (F')^\theta$, define a homomorphism $\delta' : K^0 (F')^\theta \to (\f')^\times$ by
$$\delta' (k) = \det\nolimits_{\f'} (\Ad (k) | \fW^+(F')).$$
We regard $\delta'$ also as a homomorphism
$$\delta' : \SOn_{n_0}(\f'_0) \to (\f')^\times.$$
Observe that since $\fW^+ (F')=\fW^+ \otimes_{\f} \f'$, the restriction of $\delta'$ to $\SOn_{n_0}(\f_0)$ is $\delta$.
Since $\SOn_{n_0}(\f_0)$ is contained in the commutator subgroup of $\SOn_{n_0}(\f'_0)$ by Lemma~\ref{lem:spinor}, $\delta$ is trivial.  It follows that $\eta'_\theta$ must be trivial.

\subsection{Lusztig's theory for our examples}

To simplify notations, we assume in this section that $n= n_0$.  Later, we use the results of this section with $n$ replaced by $n_0$.  Our objective is to apply the results of \S \ref{sec:genlusztig} to the finite groups that arise from the tame supercuspidal representations consdered in this paper.  What we do turns out to be a routine generalization of \S2 in \cite{HzM} analogous to our generalization of the theory in \cite{L}.

We resume the notations of \S \ref{sec:genlusztig} with $\F_q = \f_0$ and $\bG = \GL_{n}$ (where $n$ is an odd integer greater than 1).
Let $\theta$ be the involution $\theta (g) = {}^t g^{-1}$ of $G$.
Then $\bG^\theta = {\rm O}_{n}$ and $(\bG^\theta)^\circ = {\rm SO}_{n}$.  Let $\cJ$ be the set of all $\theta$-split  maximal tori in $\bG$.  The group $(\bG^\theta)^\circ$ acts transitively on $\cJ$ by conjugation.  (See \S1.5 \cite{L}.)

Let $\bT$ be a $\theta$-stable elliptic maximal torus in $\bG$.  Let $\lambda$ and $\chi$ be complex characters of $T$ and $G^\theta$, respectively.  Assume that $\lambda$ is nonsingular (in the sense of \cite{L}). Let $\Xi_\bT$ denote the set
of all $g\in G$ such that $(g\cdot \theta)(\bT) = \bT$.  Like $\Xi_{\bT,\lambda,\chi}$, this set is a union of double cosets in $T\bs G/G^\theta$.

\begin{lemma}\label{HMaolemma}  The set $\Xi_\bT$
consists of a single double coset in $T\bs G/G^\theta$.
The set $\Xi_{\bT,\lambda,\chi}$ is empty unless $\lambda (-1) = \chi (-1)$ in which case it equals $\Xi_\bT$.
\end{lemma}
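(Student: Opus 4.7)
The plan is first to identify $\Xi_\bT$ with the single double coset $T G^\theta$ by analyzing the map $g \mapsto g\,{}^tg = g\theta(g)^{-1}$, and then to reduce the defining condition for $\Xi_{\bT,\lambda,\chi}$ to the single equation $\lambda(-1) = \chi(-1)$ by computing $T \cap G^{g\cdot\theta} = \{\pm 1\}$.

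For the first claim, note that $g \mapsto g\,{}^tg$ descends to a $T$-equivariant injection from $G/G^\theta$ into $\cS$ (the set of symmetric matrices in $G$), where $T$ acts on $G/G^\theta$ by left translation and on $\cS$ by $t\cdot s := tst$; the image consists of the symmetric matrices of square determinant. A direct computation using $\theta(\bT) = \bT$ shows that $g \in \Xi_\bT$ if and only if $g\,{}^tg \in N_G(\bT)$. The key step is to observe that $N_G(\bT) \cap \cS = T$: for $s \in N_G(\bT)$ with Weyl-group class $w \in W := N_G(\bT)/T$, the condition $s \in \cS$ reads $\theta(s) = s^{-1}$, and since $\bT$ is $\theta$-split, a short computation shows that $\theta(s)$ and $s^{-1}$ act on $T$ as $w$ and $w^{-1}$ respectively, forcing $w^2 = 1$; since $W \cong \Z/n$ has odd order, $w = 1$ and $s \in T$. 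The image of $\Xi_\bT/G^\theta$ in $T$ thus consists of those $t \in T$ with $\det t$ a square in $\f_q^\times$; identifying $\det|_T$ with $N_{\f_{q^n}/\f_q}$ and using that the norm induces an isomorphism $T/T^2 \cong \f_q^\times/(\f_q^\times)^2$ (valid because $T$ is cyclic of even order and the finite-field norm is surjective), this image equals $T^2$. Finally, the $T$-orbit of $1$ under $t\cdot s = t^2 s$ is all of $T^2$, so $\Xi_\bT/G^\theta$ is a single $T$-orbit and $\Xi_\bT$ is a single $(T, G^\theta)$-double coset, necessarily $T G^\theta$ since $1 \in \Xi_\bT$.

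For the second claim, fix $g \in \Xi_\bT$. Since $g^{-1}\bT g$ is $\theta$-stable and hence $\theta$-split (by the finite-field analogue of Lemma~\ref{thetastabisthetasplit}, whose proof transfers verbatim because $[\f_{q^n}:\f_q] = n$ is odd), a direct computation gives $(g\cdot\theta)(t) = t^{-1}$ for every $t \in T$, so $T \cap G^{g\cdot\theta} = T[2] = \{\pm 1\}$ (the latter equality because $T \cong \f_{q^n}^\times$ is cyclic of even order). The defining equation for $\Xi_{\bT,\lambda,\chi}$ is automatic at $t = 1$, so only $t = -1$ matters. Since $-1$ is central in $\bG$, we have $\chi(g^{-1}(-1)g) = \chi(-1)$; and since the identity component $\bZ_{-1}$ of the centralizer of $-1$ equals $\bG$, the two factors in $\varepsilon_{g^{-1}\bT g}(-1)$ coincide and cancel, yielding $\varepsilon_{g^{-1}\bT g}(-1) = 1$. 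Using $\chi(-1)^{-1} = \chi(-1)$, the condition reduces to $\lambda(-1) = \chi(-1)$, a condition independent of $g$. Hence $\Xi_{\bT,\lambda,\chi}$ is empty if this equation fails and equals $\Xi_\bT$ otherwise.

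The main obstacle will be the Weyl-group argument that $N_G(\bT) \cap \cS = T$, which rests crucially on the oddness of $n$. Everything else is either formal (injectivity of the Cartan map), standard (the finite-field norm/squares isomorphism, centrality of $-1$), or a direct reduction.
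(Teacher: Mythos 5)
Your proof is correct, but it takes a different route from the paper's on the first assertion. The paper does not reprove that $\Xi_\bT$ is a single double coset: it simply cites Lemma 2 of \cite{HzM} (noting that the statement transfers to any elliptic maximal torus by conjugacy), whereas you give a self-contained argument via the Cartan map $g\mapsto g\,{}^tg$, the identification $N_G(\bT)\cap\cS=T$ (your odd-order Weyl group computation, with $N_G(\bT)/T\cong\Z/n$, is sound, and $\theta(s),s^{-1}$ do act as $w,w^{-1}$ because $\bT$ is $\theta$-split), and the classification of quadratic forms over $\F_q$ by discriminant; note that the equivariance $t\cdot s=tst$ silently uses that elements of $T$ are symmetric, which again follows from $\theta$-splitness, so you should say so. On the second assertion the two arguments are essentially the same in outline ($\theta$-stable implies $\theta$-split by the odd-degree lemma, the intersection with the fixed group is $\{\pm1\}$, $\varepsilon(\pm1)=1$ by centrality of $-1$, and the condition collapses to $\lambda(-1)=\chi(-1)$ independently of $g$), except that the paper gets $G^\theta\cap g^{-1}Tg=\{\pm1\}$ by conjugating the $\theta$-split torus into the diagonal torus via the transitivity of $(\bG^\theta)^\circ$ on $\theta$-split maximal tori (Lusztig, \S1.5), while you compute directly that $(g\cdot\theta)(t)=t^{-1}$ on $T$, hence $T\cap G^{g\cdot\theta}=T[2]=\{\pm1\}$ by cyclicity; your version is a bit more direct and avoids that transitivity input. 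What your approach buys is independence from \cite{HzM}; what the paper's buys is brevity, at the cost of importing the reference.
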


\begin{proof}
The first assertion is Lemma 2 of \cite{HzM}.  As stated, this lemma only applies to a certain specific elliptic maximal $\F_q$-torus of $\bG$.  However, the lemma holds for any such torus since all such tori are conjugate in $G$.  We now prove the second assertion (which generalizes Lemma~1 of \cite{HzM}).

Suppose $g\in \Xi_\bT$.  Then $g^{-1} \bT g$ is $\theta$-stable and hence $\theta$-split by Lemma \ref{thetastabisthetasplit} (which applies equally well when the local field $F$ is replaced by the finite field $\F_q$).  Hence $g^{-1}\bT g\in \cJ$.  Let $\bA$ be the $\theta$-stable (hence $\theta$-split) maximal $\F$-torus of $\bG$ consisting of the diagonal matrices.  We may choose $h\in (\bG^\theta)^\circ$ such that $g^{-1}\bT g = h\bA h^{-1}$.  It follows that $G^\theta\cap g^{-1} T g\subset h (\bG^\theta \cap \bA) h^{-1}$.  But the elements of $\bG^\theta\cap \bA$ are diagonal matrices whose diagonal entries are $\pm 1$.  Thus the squares of all elements of $G^\theta \cap g^{-1} Tg$ are trivial.  Since $g^{-1}Tg\cong \F_{q^n}^\times$, we deduce that $G^\theta \cap g^{-1}Tg =\{ \pm 1\}$.  Since $\varepsilon_{g^{-1}\bT g}(\pm 1) =1$, we see that $g\in \Xi_{\bT,\lambda, \chi}$ if and only if $\lambda (-1) = \chi (-1)$.  But the latter condition does not depend on $g$.  Therefore, if it is satisfied we have $\Xi_{\bT,\lambda,\chi}=\Xi_\bT$ and if it is not satisfied $\Xi_{\bT,\lambda,\chi}$ is empty.
\end{proof}

\begin{proposition}\label{orthLusztigwithchi}
Suppose  $\theta$ is an orthogonal involution of $G$ and $\bT$ is an elliptic maximal $\F_q$-torus in $\bG$.  If $\lambda$ is a  character of $T$ and  $\chi$ is a character of $G^\theta$ then
$$\frac{1}{|G^\theta|} \sum_{h\in G^\theta}  R_{\bT,\lambda} (h)\ \chi(h)
=\begin{cases}1,&\text{if }\lambda(-1) = \chi(-1),\\ 0,&\text{otherwise.}
\end{cases}$$
\end{proposition}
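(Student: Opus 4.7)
The plan is to combine Theorem \ref{Lusztigwithchi} with Lemma \ref{HMaolemma}; together these essentially hand us the result. First, I would apply Theorem \ref{Lusztigwithchi} with $\bG^\theta_* = \bG^\theta$ to rewrite the left-hand side as
$$\sigma(\bT)\sum_{g\in T\bs \Xi_{\bT,\lambda,\chi}/G^\theta}\sigma\bigl(Z_\bG((g^{-1}\bT g\cap \bG^\theta)^\circ)\bigr).$$

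By Lemma \ref{HMaolemma}, the set $\Xi_{\bT,\lambda,\chi}$ is either empty---which occurs precisely when $\lambda(-1)\ne\chi(-1)$---or equal to $\Xi_\bT$, a single double coset in $T\bs G/G^\theta$. In the first case the sum is vacuous and we obtain zero, matching the second clause of the proposition. In the second case there is a single summand to evaluate, and I would extract from the proof of Lemma \ref{HMaolemma} the observation that $g^{-1}\bT g$ is $\theta$-split and hence $(\bG^\theta)^\circ$-conjugate to the diagonal torus $\bA$, so that $\bG^\theta\cap g^{-1}\bT g$ is conjugate to the finite group $\bG^\theta\cap\bA$ of diagonal matrices with entries $\pm 1$. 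Therefore $(g^{-1}\bT g\cap \bG^\theta)^\circ$ is trivial, its centralizer in $\bG$ equals $\bG$, and the lone summand is $\sigma(\bG)$.

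It remains to verify that $\sigma(\bT)\sigma(\bG) = 1$. Since $\bG = \GL_n$ is $\F_q$-split, $\sigma(\bG) = (-1)^n = -1$ because $n$ is odd. Since $\bT$ is an elliptic maximal torus of $\bG$, its maximal $\F_q$-split subtorus coincides with the split center $Z(\bG)\cong\GL_1$, so the $\F_q$-rank of $\bT$ is $1$ and $\sigma(\bT) = -1$. Hence the product is $+1$, as required. There is no serious obstacle; the proof is a direct application of Theorem \ref{Lusztigwithchi} and Lemma \ref{HMaolemma}, with the hypothesis that $n$ is odd entering precisely to ensure $\sigma(\bT)\sigma(\bG)$ equals $+1$ rather than $-1$.
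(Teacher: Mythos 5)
Your proof is correct and takes essentially the same route as the paper: apply Theorem~\ref{Lusztigwithchi} with $\bG^\theta_*=\bG^\theta$, use Lemma~\ref{HMaolemma} to see that the index set is empty when $\lambda(-1)\neq\chi(-1)$ and otherwise a single double coset on which $(g^{-1}\bT g\cap\bG^\theta)^\circ$ is trivial, and conclude via $\sigma(\bT)\sigma(\bG)=(-1)(-1)=1$ using that $n$ is odd. The only point to add is that Lemma~\ref{HMaolemma} is stated (and proved, using the $\theta$-split diagonal torus) for the standard involution $g\mapsto{}^t g^{-1}$, so the paper first establishes the formula for that $\theta$ and then deduces the case of an arbitrary orthogonal involution by applying an inner automorphism --- a one-sentence reduction you should include.
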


\begin{proof}
We first assume that $\theta$ is chosen as above.
Theorem \ref{Lusztigwithchi} says
$$\frac{1}{|G^\theta|} \sum_{h\in G^\theta}  R_{\bT,\lambda} (h)\ \chi(h)
=\sigma (\bT) \sum_{g\in T\bs \Xi_{\bT,\lambda,\chi}/G^\theta} \sigma \left(Z_\bG\left( (g^{-1}\bT g \cap \bG^\theta)^\circ\right)\right).$$
If $\lambda(-1)\ne \chi (-1)$ then our claim follows from Lemma \ref{HMaolemma} since the latter sum over $T\bs \Xi_{\bT,\lambda,\chi}/G^\theta$ is an empty sum.  Now assume $\lambda(-1) = \chi (-1)$.  Then Lemma \ref{HMaolemma} implies that $\Xi_{\bT,\lambda,\chi} = Tg_0 G^\theta$.  We have $\sigma (\bT) =-1$ and 
$$ \sigma \left(Z_\bG\left( (g^{-1}\bT g \cap \bG^\theta)^\circ\right)\right) =
\sigma (\bG) =-1.$$  This establishes our claim for the given $\theta$.  The case of general orthogonal involutions follows upon applying an inner automorphism to the formula in the special case already proven.
\end{proof}

\subsection{Main results}

We now prove the main theorem:

\begin{theorem}\label{maintheorem}
Let $\pi$ be an irreducible tame supercuspidal representation of $G$ with central character $\omega$ and let $\theta$ be an orthogonal involution of $G$.  Then $\pi$  is $G^\theta$-distinguished precisely when $\theta$ lies in  $\Theta_J$ and
 $\omega (-1)=1$.  When $\pi$ is $G^\theta$-distinguished, the dimension of $\Hom_{G^\theta} (\pi ,1)$ is one. 
If $\pi$ is associated to  an $F$-admissible quasicharacter $\varphi$
then the condition $\omega (-1) =1$ can also be stated as $\varphi (-1)=1$.  Similarly, if $\pi$ is associated to a cuspidal $G$-datum $\Psi = (\vec\bG ,y,\rho,\vec\phi)$ and if $\omega'$ is the central character of $\rho'= \rho\otimes\phi$ then $\omega (-1) =1$ can also be stated as $\omega' (-1) =1$.
\end{theorem}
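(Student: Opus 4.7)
The strategy is to evaluate the refined multiplicity formula
\[
\langle \Theta,\Psi\rangle_G \;=\; \sum_{[\theta]\sim[\Psi]} m_{K^0}([\theta])\,\langle[\theta],[\Psi]\rangle_{K^0}
\]
of Theorem~\ref{thm:K^0_formula} term by term. Three overall simplifications are immediate: $m_{K^0}([\theta])=1$ by Lemma~\ref{mKthetaistrivial}; $\eta'_\theta=1$ by Proposition~\ref{etaprimeistrivial}; and for any $\theta$ contributing nontrivially we may replace $\theta$ by a representative of $[\theta]$ for which the specific torus $\bT$ is $\theta$-split (use Proposition~\ref{strongsymmetrylemma} to produce a $\theta$-split torus $\bT'$, and then conjugate $\theta$ by an element of $G^0_{y,0^+}\subset K^0$ carrying $\bT'$ to $\bT$, as in Lemma~\ref{lem:lift-tori}). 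In particular, Proposition~\ref{refinednoddresult} then forces $\Theta=\Theta_J$: otherwise no $\theta\in\Theta$ makes $\bT$ into a $\theta$-stable torus, so every summand vanishes and $\pi$ cannot be $G^\theta$-distinguished. Henceforth assume $\Theta=\Theta_J$.

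The second step is to show that the sum has at most one surviving term. Proposition~\ref{refinednoddresult} identifies the set of $\theta\in\Theta_J$ for which $\bT$ is $\theta$-split as a single $T$-orbit. Because $E/E_0$ is unramified, $T=\m{E^\times}=\m{E_0^\times}\cdot\m{\mathcal O_E^\times}\subset K^0=\m{E_0^\times}G^0_{y,0}$, so this $T$-orbit lies inside one $K^0$-orbit $[\theta_*]$, which is then the only possible nonzero contributor.

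Next I would compute the single surviving summand $\dim\Hom_{K^{0,\theta_*}}(\rho',1)$. In the toral case, $K^0=T$ and $K^{0,\theta_*}=T^{\theta_*}=\{\pm1\}$; the identity $\phi|_T=\varphi$ (from $\phi_i=\varphi_i\circ\det_i$ and $\det_i\circ\iota=N_{E/E_i}$ on $T$) together with $\rho=1$ makes the summand equal to $\dim\Hom_{\{\pm1\}}(\varphi,1)$, which is $1$ iff $\varphi(-1)=1$. In the nontoral case, the reduction in~\S\ref{sec:reduction} rewrites the summand as $\dim\Hom_{\mathsf G^0_y(\f)^{\theta_*}}(R^{\mathsf T}_\lambda,\phi)$, which Proposition~\ref{orthLusztigwithchi} (applicable since $n_0=n/[E_0:F]$ is odd) evaluates as $1$ if $\lambda(-1)=\phi(-1)$ and $0$ otherwise. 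Since the central character of $R^{\mathsf T}_\lambda$ sends $-1$ to $\lambda(-1)$, we have $\rho(-1)=\lambda(-1)$, and the criterion $\lambda(-1)=\phi(-1)$ becomes $\omega'(-1)=\rho(-1)\phi(-1)=1$. In either case the summand is $0$ or $1$, so whenever $\pi$ is $G^\theta$-distinguished, the Hom space has dimension exactly one.

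The remaining task is to identify $\omega(-1)$, $\omega'(-1)$, and $\varphi(-1)$. Because each $n_i=n/[E_i:F]$ is odd, $\det_i(-I)=-1$, so $\phi_i(-1)=\varphi_i(-1)$ and thus $\phi(-1)=\prod_i\varphi_i(-1)$; combined with $\rho(-1)=\varphi_{-1}(-1)$ (trivially in the toral case, and via reduction of $\varphi_{-1}|_{\mathcal O_E^\times}$ modulo $\gP_E$ in the nontoral case) this yields $\omega'(-1)=\varphi(-1)$. The identification $\omega(-1)=\omega'(-1)$ is built into Yu's construction, in which $Z$ acts on $\kappa(\Psi)$ by $\rho'|_Z$ and this character descends to the central character of $\pi(\Psi)=\ind_K^G\kappa(\Psi)$. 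The main delicate point is this simultaneous bookkeeping: tracing $-1$ through Howe's parametrization, Yu's construction, and the Deligne-Lusztig correspondence used to build $\rho^\circ$ from $\varphi_{-1}$; no single step is deep, but keeping everything consistent across the toral and nontoral cases is what demands the most care.
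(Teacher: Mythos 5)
Your route is the paper's own: the refined formula of Theorem~\ref{thm:K^0_formula}, $m_{K^0}([\theta])=1$ from Lemma~\ref{mKthetaistrivial}, triviality of $\eta'_\theta$ from Proposition~\ref{etaprimeistrivial}, reduction (via Proposition~\ref{strongsymmetrylemma} and Lemma~\ref{lem:lift-tori}) to involutions splitting the torus $\bT$ of the datum, uniqueness of the contributing $K^0$-orbit from Proposition~\ref{refinednoddresult} together with $T\subseteq K^0$, and the toral/nontoral evaluation via $K^{0,\theta}=\{\pm 1\}$ and Proposition~\ref{orthLusztigwithchi}. Your necessity direction, the multiplicity-one statement, and the bookkeeping $\omega(-1)=\omega'(-1)=\varphi(-1)$ (which the paper leaves largely implicit) are all fine.

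The gap is in the sufficiency direction, i.e.\ the ``if'' half of ``precisely when.'' By definition, $\langle[\theta_*],[\Psi]\rangle_{K^0}$ equals $\dim\Hom_{K^{0,\theta_*}}(\rho',\eta'_{\theta_*})$ \emph{only} when $[\theta_*]\sim[\Psi]$, that is, only when $\theta_*$ stabilizes $K^0$ and $\phi|K^{0,\theta_*}_+=1$; otherwise the summand is $0$ regardless of $\omega'(-1)$. When you assume only $\theta\in\Theta_J$ and $\omega'(-1)=1$ and then ``compute the single surviving summand $\dim\Hom_{K^{0,\theta_*}}(\rho',1)$,'' you are silently assuming this compatibility, and it does not come for free from the fact that $T$ is $\theta_*$-split. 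One must check: (a) $\theta_*(\bG^0)=\bG^0$ and $\theta_*([y])=[y]$ (because $\bZ^0\subset\bT$ is $\theta_*$-split and $[y]$ is determined by $\bT$), whence $\theta_*(K^0)=K^0$ by Lemma~\ref{lem:K^0}; and (b) $\phi|K^{0,\theta_*}_+=1$, which the paper deduces from Corollary~\ref{cor:theta-symmetric} (weak $\theta_*$-symmetry gives $\phi\circ\theta_*=\phi^{-1}$) combined with the fact that $K^{0,\theta_*}_+$ is pro-$p$ with $p$ odd. This is exactly the content of the paper's closing paragraph, which your proposal omits (you never invoke Corollary~\ref{cor:theta-symmetric}); without it your argument establishes ``distinguished $\Rightarrow$ $\Theta=\Theta_J$, $\omega(-1)=1$, and multiplicity one,'' but not the converse.
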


\begin{proof}
Let $\Theta$ be a $G$-orbit of orthogonal involutions of $G$ and let $\Psi$ be a cuspidal $G$-datum to which $\pi$ is associated.  From \S \ref{sec:correction}, we have
$$\langle \Theta, \Psi\rangle_G = \sum_{[\theta]\sim[\Psi]} m_{K^0} ([\theta])\ \langle [\theta] , [\Psi]\rangle_{K^0},$$
which simplifies to
$$\langle \Theta, \Psi\rangle_G = \sum_{[\theta]\sim[\Psi]} \langle [\theta] , [\Psi]\rangle_{K^0},$$  
according to Lemma \ref{mKthetaistrivial}.

Suppose we have a nonzero summand $\langle [\theta] , [\Psi]\rangle_{K^0}$.
According to Proposition \ref{strongsymmetrylemma}, there exists a $\theta$-split maximal $F$-torus $\bT$ with the properties described in Definition~\ref{cuspidalGdatum}.  Proposition \ref{refinednoddresult}
implies that the $G$-orbit $\Theta_J$  is the unique $G$-orbit of orthogonal involutions of $G$ that contains an involution $\theta'$ for which $\bT$ is $\theta'$-stable (and hence $\theta'$-split).   Therefore, the existence of a nonzero summand implies that $\Theta = \Theta_J$.  In other words, since $\langle \Theta ,\Psi\rangle_G$ is nonzero then $\Theta = \Theta_J$.

Now fix a maximal torus $\bT$ as above.  
Proposition \ref{refinednoddresult} states that the set of all $\theta'\in \Theta_J$ such that $T$ is $\theta'$-split
comprises a single $T$-orbit in $\Theta_J$.  Since $T\subseteq K$, we see that there can be at most one nonzero summand.  Let $\theta$ be an element of the unique orbit parametrizing this summand.
Then
$$\langle \Theta , \Psi\rangle_G = \langle [\theta] , [\Psi]\rangle_{K^0}.$$
By Proposition \ref{etaprimeistrivial}, $\eta'_\theta$ is trivial and thus
$$\langle [\theta ] , [\Psi]\rangle_{K^0} =\dim \Hom_{K^{0,\theta}}(\rho', \eta'_\theta) = \dim \Hom_{K^{0,\theta}}(\rho', 1).$$ 

Assume  that the datum $\Psi$ is toral.
Then $\rho =1$ and $\rho' = \rho\otimes \phi = \phi$.
If $\Psi$ comes from a Howe datum $\Phi$ and we identify $T$ with $E^\times$ then the quasicharacter $\phi$ of $T$ corresponds to the $F$-admissible quasicharacter $\varphi$ of $E^\times$.

But $K^{0,\theta} = \{ \pm 1\}$, $\rho'=\phi$
so
$$\Hom_{K^{0,\theta}} (\rho',1) = \Hom_{ \{ \pm 1\} }(\phi , 1),$$ 
and, in the toral case, we have
 $$\langle \Theta ,\Psi\rangle_G =
\begin{cases}1,&\text{if }\phi (-1)=1,\\
0,&\text{if }\phi(-1) = -1,
\end{cases}
$$
or, in other words,
$$\langle \Theta ,\Psi\rangle_G =
\begin{cases}1,&\text{if }\omega' (-1)=1,\\
0,&\text{if }\omega' (-1) = -1,
\end{cases}
$$ where $\omega'$ is the central character of $\rho'$.

Now assume $\Psi$ is not toral.  Referring back to the discussion in \S \ref{sec:reduction}, we obtain the formula
$$\langle \Theta,\Psi\rangle_G= \langle [\theta ],[\Psi]\rangle_{K^0} = \dim {\rm Hom}_{\mathsf{G}_y^0(\f)^\theta} ( R^\lambda_{\mathsf{T}(\f)}, \eta_\theta).$$
We now apply Proposition \ref{orthLusztigwithchi} to the $\f_0$-group $\mathsf G_y^0$ using the fact that $\eta_\theta = \phi |G^{0,\theta}_{y,0}$
and  again we obtain
$$\langle \Theta ,\Psi\rangle_G =
\begin{cases}1,&\text{if }\omega' (-1)=1,\\
0,&\text{if }\omega' (-1) = -1,
\end{cases}
$$ where $\omega'$ is the central character of $\rho'$.

The assertions of the theorem now follow directly for representations $\pi$ that are $G^\theta$-distinguished.  It remains to show that our necessary conditions for $G^\theta$-distinction are also sufficient conditions.
Now suppose $\theta\in \Theta_J$ and $\Psi = (\vec\bG ,y,\rho,\vec\phi)$ is a cuspidal $G$-datum with $\omega' (-1) =1$.  
Let $E/F$ be a tame extension of degree $n$ appearing in a Howe datum associated to $\Psi$ (as in~\S\ref{sec:attaching}).
The maximal torus $\bT$ appearing in Definition~\ref{cuspidalGdatum} must be isomorphic to $R_{E/F}\GL_1$.

Since $\theta\in \Theta_J$, Proposition~\ref{refinednoddresult} implies that there exists a $\theta$-stable embedding of $E$ in $M(n,F)$.  The results of~\S\ref{sec:embeddings} show that after conjugating by an appropriate element of $G$, we may assume that $\bT$, and hence $[y]$, are $\theta$-stable.  Moreover, $\bT$ must be $\theta$-split by ~\ref{refinednoddresult}.  The same must be true for $\bZ^0\subset \bT$.  By Corollary~\ref{cor:theta-symmetric}, we must have
\begin{enumerate}
\item [(1)] $\theta$ stabilizes $\vec\bG$,
\item [(2)] $\phi\circ\theta=\phi^{-1}$.
\end{enumerate}
In particular, (1) implies that $\theta$ must stabilize $\bG^0$, hence $K^0$, by Lemma~\ref{lem:K^0}.  Moreover, (2) implies that for $g\in K^{0,\theta}_+$, we have $\phi(g) = \phi(\theta(g)) = \phi(g)^{-1}$.  Thus $\phi(g)=\pm 1$, and since $K^{0,\theta}_+$ is a pro-$p$ group, we must have $\phi|K^{0,\theta}_+ = 1$.   We have therefore shown that $[\theta]\sim[\Psi]$.  As in the above discussion, we see that the assumption that $\omega' (-1) =1$ then implies that
$\langle [\theta] ,[\Psi] \rangle_{K^0} = 1$ and thus $\pi$ is $G^\theta$-distinguished.
\end{proof}

\appendix
\section{Lifting Tori from $\mathsf G_y$ to $\bG$}
\label{sec:lifting-tori}
Let $\gF$ denote the algebraic closure of $\f$ and let $\Fun$ denote the maximal unramified extension of $F$.
The following result
is a direct consequence of~\cite[\S2]{D}.
\begin{lemma}
\label{lem:lifting-tori}
  Let $\bG$ be a reductive group over $F$.  Suppose that $\bT$ is an elliptic maximal unramified $F$-torus of $\bG$ and $y$ is a vertex in $A(\bG, \bT, F)$.
\begin{enumerate}
  \item  The torus $\bT$ determines a minisotropic maximal $\f$-torus $\mathsf T$ of $\mathsf
  G_y$ such that the image of $\bT(\Fun)\cap\bG(\Fun)_{y,0}$ in $\mathsf G_y(\gF)$ is $\mathsf
  T(\gF)$.
  \item If $\bT'$ is another elliptic maximal unramified $F$-torus of $\bG$ such that $y\in A(\bG, \bT', F)$, and $\mathsf T'$ is the associated torus of $\mathsf G_y$, then $\mathsf T' = \mathsf T$ if
  and only if $\bT'$ is conjugate to $\bT$ by an element of $G_{y,0^+}$.
\item Every minisotropic maximal $\f$-torus of $\mathsf G_y$ is associated to some elliptic maximal unramified $F$-torus of $\bG$ in this way.
\end{enumerate}
\end{lemma}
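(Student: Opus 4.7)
The plan is to derive all three parts from the dictionary between unramified maximal tori and $\f$-tori of $\mathsf G_y$ developed in \cite[\S 2]{D}. The key observation is that since $\bT$ is unramified, it splits over $\Fun$ and its intersection with the parahoric, $\bT(\Fun)\cap\bG(\Fun)_{y,0}$, is the unique maximal bounded subgroup $\bT(\Fun)_0$ of $\bT(\Fun)$; moreover, the reduction map $\bG(\Fun)_{y,0}\to\mathsf G_y(\gF)$ sends $\bT(\Fun)_0$ onto the $\gF$-points of a torus $\mathsf T$ whose dimension equals $\dim\bT$ and which is maximal in $\mathsf G_y$ precisely because $y$ lies in the apartment of $\bT$. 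The $F$-rational structure on $\bT$ transports via the Frobenius action to an $\f$-structure on $\mathsf T$, and $F$-ellipticity of $\bT$ (namely, the character lattice $X^*(\bT)$ has no Frobenius-fixed vectors outside $X^*(\bZ)$) translates directly into $\f$-minisotropy of $\mathsf T$ in $\mathsf G_y$. This establishes (1).

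For (2), if $\bT'$ is another elliptic maximal unramified $F$-torus of $\bG$ with $y\in A(\bG,\bT',F)$ and $\mathsf T' = \mathsf T$, then $\bT(\Fun)_0$ and $\bT'(\Fun)_0$ both reduce to $\mathsf T(\gF)$. Lifting the conjugacy of maximal tori in $\mathsf G_y(\gF)$ through $\bG(\Fun)_{y,0}$ yields an element $h\in\bG(\Fun)_{y,0^+}$ with $h\bT h^{-1}=\bT'$. A Lang-type argument on the pro-unipotent group $\bG(\Fun)_{y,0^+}$, together with the fact that the centralizer of $\bT$ in $\bG(\Fun)_{y,0^+}$ is $\bT(\Fun)_{0^+}$, then permits $h$ to be adjusted so as to be $\gal(\Fun/F)$-fixed, producing the desired $g\in G_{y,0^+}$. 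The converse is immediate since $G_{y,0^+}$ lies in the kernel of the reduction map to $\mathsf G_y(\f)$.

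For (3), given a minisotropic maximal $\f$-torus $\mathsf T$ of $\mathsf G_y$, smoothness of the scheme of maximal tori (applied over $\gO_{\Fun}$ to a smooth model of the parahoric) produces a lift to a maximal $\Fun$-split torus $\bT$ of $\bG$ with $\bT(\Fun)_0$ reducing onto $\mathsf T(\gF)$ and $y\in A(\bG,\bT,\Fun)$. The Frobenius action on $\mathsf T$ then lifts uniquely to an action on $\bT$, endowing it with an $F$-rational structure under which it is unramified; the minisotropy of $\mathsf T$ forces $\bT$ to be $F$-elliptic, and $y\in A(\bG,\bT,F)$ by construction.

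The main obstacle in writing out a fully self-contained proof is the descent from $\Fun$ to $F$ in parts (2) and (3), which requires a careful analysis of Frobenius-twisted conjugacy classes in $\bG(\Fun)_{y,0}$ and $\bG(\Fun)_{y,0^+}$. This descent is exactly what is carried out in \cite[\S 2]{D}, so in the present exposition it suffices to invoke that reference directly.
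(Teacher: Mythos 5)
Your proposal follows the same route as the paper: the paper gives no independent argument for this lemma, stating only that it is a direct consequence of \cite[\S 2]{D}, and your sketch likewise reduces everything (in particular the Frobenius descent from $\Fun$ to $F$ in parts (2) and (3)) to DeBacker's parameterization of unramified maximal tori. The additional outline you give of how the correspondence works is accurate and consistent with what \cite{D} provides, so invoking that reference, as you do, is exactly what is needed.
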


Suppose now that $\theta$ is an involution of $G$.  The following result is an analogue of Lemma~\ref{lem:lifting-tori} (3) for $\theta$-stable tori.

\begin{lemma}
\label{lem:stable-tori}
Let $\bG$ be a reductive algebraic group defined over $F$, and let
$\theta$ be an $F$-involution of $\bG$.  Let $y$ be a vertex in $\cB (\bG,F)$ and suppose
that $\theta([y]) = [y]$.  Let $\mathsf T$ be a $\theta$-stable
minisotropic maximal $\f$-torus of $\mathsf G_y$.
Then there exists a $\theta$-stable elliptic maximal unramified $F$-torus $\bT$ of $\bG$
such that $y\in A(\bG, \bT, F)$ and the image of $\bT(\Fun)\cap\bG(\Fun)_{y,0}$ in $\mathsf G_y(\gF)$ is $\mathsf T(\gF)$.
\end{lemma}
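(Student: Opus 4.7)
The plan is to start with an arbitrary lift $\bT_0$ provided by Lemma~\ref{lem:lifting-tori}(3) and then conjugate it within $G_{y,0^+}$ to achieve $\theta$-stability. First I would apply Lemma~\ref{lem:lifting-tori}(3) to fix an elliptic maximal unramified $F$-torus $\bT_0$ of $\bG$ with $y\in A(\bG,\bT_0,F)$ whose associated $\f$-torus is $\mathsf T$. Since $\theta$ is an $F$-automorphism fixing $[y]$, the torus $\theta(\bT_0)$ has the same properties and its associated $\f$-torus is $\theta(\mathsf T)=\mathsf T$. Lemma~\ref{lem:lifting-tori}(2) therefore yields $g_0\in G_{y,0^+}$ with $\theta(\bT_0)=\Int(g_0)(\bT_0)$, and applying $\theta$ once more gives the ``cocycle'' relation $\theta(g_0)g_0\in N_\bG(\bT_0)\cap G_{y,0^+}$. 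Because $G_{y,0^+}$ is pro-$p$ while the Weyl group of $\bT_0$ is finite, this intersection coincides with $T_{0,0^+}:=\bT_0(F)\cap G_{y,0^+}$.

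Next I would translate the goal into a cocycle equation: for $h\in G_{y,0^+}$, the torus $\Int(h)(\bT_0)$ is $\theta$-stable precisely when $h^{-1}\theta(h)g_0\in N_\bG(\bT_0)\cap G_{y,0^+}=T_{0,0^+}$, i.e., when $h^{-1}\theta(h)\in T_{0,0^+}g_0^{-1}$. Any such $h$ automatically lies in $G_{y,0^+}$, so $\bT:=\Int(h)(\bT_0)$ is again a lift of $\mathsf T$ by Lemma~\ref{lem:lifting-tori}(2); the remaining assertions---that $y\in A(\bG,\bT,F)$ and that $\bT$ is elliptic and unramified---are inherited from $\bT_0$ under conjugation by $h\in G\cap G_{y,0^+}$. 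The lemma is thus reduced to producing such an $h$.

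I would solve the cocycle equation in two stages. First, find $t\in T_{0,0^+}$ such that $g_0':=tg_0$ satisfies $\theta(g_0')g_0'=1$. Second, apply the analogue of Lemma~2.11 of \cite{HM} for the pro-$p$ group $G_{y,0^+}$---valid because $p$ is odd, so $G_{y,0^+}$ is $2$-divisible---to write $(g_0')^{-1}=h^{-1}\theta(h)$ for some $h\in G_{y,0^+}$; then $h^{-1}\theta(h)g_0=t^{-1}\in T_{0,0^+}$, as required. The main obstacle is the first stage, whose principal subtlety is that since $\bT_0$ is not $\theta$-stable, $\theta$ does not preserve $T_{0,0^+}$ but rather carries it to $\Int(g_0)(T_{0,0^+})$, so the equation $\theta(t)\theta(g_0)tg_0=1$ intertwines both tori. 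I expect to handle this via a Moy--Prasad filtration argument: descending step by step along the subquotients $(T_{0,0^+}\cap G_{y,r})/(T_{0,0^+}\cap G_{y,r^+})$ for $r>0$, the group operations linearize to the graded Lie algebra $\bfr g_{y,r:r^+}$, where the $2$-divisibility of each abelian graded piece together with the way $g_0$ conjugates $\bT_0$ to $\theta(\bT_0)$ permits an inductive construction of the correction $t$.
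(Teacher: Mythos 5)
Your route is genuinely different from the paper's: the paper topologizes the set of lifts of $\mathsf T$ as a homogeneous space for the pro-$p$ group $G_{y,0^+}$ and extracts a $\theta$-fixed torus as the intersection of a nested sequence of $\theta$-stable orbits, using at each Moy--Prasad level that an involution of a set of odd (power of $q$) cardinality has a fixed point; you instead try to solve a cocycle equation. Your strategy can be completed, but as written it has concrete gaps. First, the identification $N_\bG(\bT_0)(F)\cap G_{y,0^+}=T_{0,0^+}$ is the pivot of your reduction, and ``pro-$p$ versus finite Weyl group'' does not prove it: the image of a pro-$p$ group in $W(\bG,\bT_0)$ is a finite $p$-group, which need not be trivial when $p$ divides the order of the Weyl group (e.g.\ $\GL_n$ with $n\ge p$). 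A real argument is required, in the spirit of the paper's Lemma~\ref{lem:weyl} (trivial image in $\mathsf G_y$ forces trivial image in the Weyl group, using that $\bT_0$ is unramified and $y$ is a vertex of its apartment); note that the paper's proof of Lemma~\ref{lem:stable-tori} avoids any such normalizer computation. Second, the step you yourself identify as the main obstacle (stage 1, producing the correction $t$) is only announced as an expected filtration argument, not proved. Third, your bookkeeping does not close up: left-multiplying, $g_0'=tg_0$ with $t\in T_{0,0^+}$, does not preserve the property $\Int(g_0')(\bT_0)=\theta(\bT_0)$, and from $(g_0')^{-1}=h^{-1}\theta(h)$ you get $h^{-1}\theta(h)g_0=g_0^{-1}t^{-1}g_0$, which lies in $\Int(g_0)^{-1}(T_{0,0^+})$ rather than in $T_{0,0^+}$, so the stated criterion is not verified.

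The fix is to correct on the other side, and then your ``two-torus entanglement'' disappears and no filtration descent is needed. Set $c=\theta(g_0)g_0\in T_{0,0^+}$ and $\tau=\Int(g_0)^{-1}\circ\theta$; since $\theta(T_{0,0^+})=\Int(g_0)(T_{0,0^+})$, $\tau$ is an involution of the abelian, uniquely $2$-divisible pro-$p$ group $T_{0,0^+}$, and $\tau(c)=c$. With $g_0'=g_0t$, $t\in T_{0,0^+}$, one computes $\theta(g_0')g_0'=c\,\tau(t)\,t$, so it suffices to take $t$ the unique square root of $c^{-1}$ (automatically $\tau$-fixed). Then $\theta(g_0')=(g_0')^{-1}$, and taking $h$ to be the square root of $g_0'$ in the closed (abelian, $\theta$-stable) subgroup it generates gives $h^{-1}\theta(h)g_0'=1$, hence $h^{-1}\theta(h)g_0\in T_{0,0^+}$ and $\Int(h)(\bT_0)$ is $\theta$-stable. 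So the cohomological approach does work, and is arguably more explicit than the paper's compactness argument, but only after the normalizer lemma is actually proved and the cocycle bookkeeping is set up on the correct side.
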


\begin{proof}
  Let $\mathcal S_0$ be the set of elliptic maximal $F$-unramified tori
  $\bS$ of $\bG$ such that 
\begin{itemize}
\item $A(\bG, \bS, F)$ contains $y$,
\item  The maximal torus of $\mathsf G_y$ determined by $\bS$ is
  $\mathsf T$.
\end{itemize}
This set is nonempty by Lemma~\ref{lem:lifting-tori} (3).  
Note that $\mathcal S_0$ is
$\theta$-stable.  By Lemma~\ref{lem:lifting-tori} (2), $G_{y,0^+}$ acts transitively by conjugation on
$\mathcal S_0$.  We may therefore topologize $\mathcal S_0$ by giving it the quotient topology inherited from $G_{y,0^+}$.  With this topology, it is clear that $G_{y,0^+}$ acts continuously on $\mathcal S_0$.  Moreover, $\mathcal S_0$ is compact and metrizable.

  Let $X = \{ r\in\R : r\geq 0, G_{y,r}\neq G_{y,r^+}\}$.  The elements
  of $X$ can be written as a sequence $r_0, r_1,r_2,\ldots $, where
  $r_0 = 0$.  We now inductively define a nested sequence of compact subsets of $\mathcal S_0$.  Suppose we have already defined a sequence $\mathcal S_0,\mathcal S_1,\ldots,\mathcal S_i$ of compact subsets of $\mathcal S_0$ such that each $\mathcal S_j$ is a $\theta$-stable
  orbit of $G_{y,r_j^+}$ in $\mathcal S_0$.  (To begin with, note that this is true for $\mathcal S_0$.)  We claim that there is
  some $\theta$-stable orbit $\mathcal S_{i+1}$ of $G_{y,r_{i+1}^+}$ in
  $\mathcal S_i$.

  Consider the collection $\mathcal T$ of $G_{y,r_{i+1}^+}$-orbits in
  $\mathcal S_i$.  The group $G_{y,r_i^+}$ acts transitively on
  $\mathcal T$, and therefore $\mathcal T$ has size
  dividing $[G_{y,r_i^+}:G_{y,r_{i+1}^+}]$, which is a power of $q$, hence odd.
  Since $\theta$ acts as a permutation of $\mathcal T$ of order
  dividing $2$, some $G_{y,r_{i+1}^+}$-orbit $\mathcal
  S_{i+1}\in\mathcal T$ must be fixed by $\theta$, proving the claim.

  Note that the $\mathcal S_i$ form a nested sequence of nonempty compact
 subspaces.  Hence, $\mathcal S = \bigcap_i \mathcal S_i$ is
  nonempty.  Moreover, $\mathcal S$ is $\theta$-stable and is
  contained in a single $G_{y,r_i^+}$-orbit for each $i$, hence must
  be a singleton.  In other words $\mathcal S$ consists of a
  single $\theta$-stable torus.
\end{proof}

Now consider the following situation.
Let $E$ be a finite extension of $F$.  Let $\bH$ be an unramified reductive group defined over $E$, and let $\bG$ be the group $R_{E/F}\bH$ obtained from $\bH$ via restriction of scalars.  Let $\theta_0$ be an $E$-involution of $\bH$.  Then $\theta_0$ naturally determines an $F$-involution of $\bG$.  Let $\bS$ be an elliptic unramified maximal $E$-torus in $\bH$ and let $\bT$ be the torus $R_{E/F}\bS$ in $\bG$.  Then $\bT$ is an elliptic maximal torus of $\bG$ which contains a maximal unramified torus of $\bG$.  Let $y$ be a vertex in $A(\bG,\bT,F)$ such that $\theta([y]) = [y]$.  Since  $A(\bG,\bT,F) = A(\bH,\bS,E)$, we can also view $y$ as a $\theta_0$-fixed point of $A(\bH,\bS,E)$.  Note that $\theta$ descends to an $\f$-involution of $\mathsf G_y$ (which we will also denote by $\theta$).  Similarly, $\theta_0$ descends to an $\f_E$-involution of the $\f_E$-group $\mathsf H_y^E$.

\begin{proposition}
\label{prop:tori-lifting}
In the above situation, if the $\f$-torus $\mathsf T$ in $\mathsf G_y$ determined by $\bT$ is $\theta$-stable, then there is an element $g\in G_{y,0^+}$ such that $g\bT g^{-1}$ is $\theta$-stable.
\end{proposition}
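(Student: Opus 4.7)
\medskip

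The plan is to reduce the statement to a direct application of Lemma~\ref{lem:stable-tori} via the restriction-of-scalars identifications. Since $\bG = R_{E/F}\bH$, projection onto the $\iota$-component identifies $G = \bG(F)$ with $\bH(E)$, and correspondingly $\cB(\bG,F) = \cB(\bH,E)$. Under these identifications the $F$-involution $\theta$ of $\bG$ corresponds to the $E$-involution $\theta_0$ of $\bH$, the $F$-torus $\bT = R_{E/F}\bS$ corresponds to the $E$-torus $\bS$, the apartment $A(\bG,\bT,F)$ corresponds to $A(\bH,\bS,E)$, and the Moy–Prasad filtration on $G$ (with respect to the valuation of $F$) agrees with that on $\bH(E)$. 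Moreover $\mathsf G_y(\f) = G_{y,0:0^+} = H(E)_{y,0:0^+} = \mathsf H_y^E(\f_E)$ as groups, with the action of $\theta$ on $\mathsf G_y$ over $\f$ inducing the action of $\theta_0$ on $\mathsf H_y^E$ over $\f_E$.

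Under this dictionary, $y$ is a vertex of $A(\bH,\bS,E)$ with $\theta_0([y]) = [y]$ in $\cB_{\mathrm{red}}(\bH,E)$, $\bS$ is an elliptic unramified maximal $E$-torus of $\bH$, and the $\f_E$-torus $\mathsf S$ of $\mathsf H_y^E$ associated to $\bS$ by Lemma~\ref{lem:lifting-tori}(1) is precisely the torus corresponding to $\mathsf T$ under $\mathsf G_y(\f) = \mathsf H_y^E(\f_E)$. In particular $\mathsf S$ is $\theta_0$-stable, since $\mathsf T$ is $\theta$-stable by hypothesis.

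Next I would apply Lemma~\ref{lem:stable-tori} to the triple $(\bH,\theta_0,y)$ over the local field $E$, together with the $\theta_0$-stable minisotropic maximal $\f_E$-torus $\mathsf S$ of $\mathsf H_y^E$. This yields a $\theta_0$-stable elliptic unramified maximal $E$-torus $\bS'$ of $\bH$ such that $y \in A(\bH,\bS',E)$ and $\bS'$ determines the same $\f_E$-torus $\mathsf S$. By Lemma~\ref{lem:lifting-tori}(2) applied over $E$, $\bS'$ and $\bS$ are conjugate by some element $g \in H(E)_{y,0^+}$. Translating back via restriction of scalars, $g \in G_{y,0^+}$ and $g\bT g^{-1} = R_{E/F}(g\bS g^{-1}) = R_{E/F}\bS'$, which is $\theta$-stable because $\bS'$ is $\theta_0$-stable.

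The main conceptual obstacle is verifying that the bookkeeping is right, i.e.\ that the $\theta$-stable torus $\mathsf T$ in $\mathsf G_y$ really does correspond to a $\theta_0$-stable torus in $\mathsf H_y^E$ arising from $\bS$, so that Lemma~\ref{lem:stable-tori} is applicable in the promised form. Once this compatibility is in place, the proof reduces to a single invocation of the appendix lemmas; no new argument beyond the one used to prove Lemma~\ref{lem:stable-tori} itself is required.
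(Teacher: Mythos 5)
Your endgame coincides with the paper's: once one knows that the $\f_E$-torus $\mathsf S$ of $\mathsf H^E_y$ attached to $\bS$ is $\theta_0$-stable, Lemma~\ref{lem:stable-tori} applied over $E$ produces a $\theta_0$-stable torus $\bS'$ with the same associated residue torus, Lemma~\ref{lem:lifting-tori} (2) gives $g\in \bH(E)_{y,0^+}=G_{y,0^+}$ with $\bS'=g\bS g^{-1}$, and applying $R_{E/F}$ transports this to the $\theta$-stable torus $g\bT g^{-1}$. The gap is in the step you dismiss as bookkeeping, which is in fact the substance of the proposition. Lemma~\ref{lem:stable-tori} requires $\mathsf S$ to be $\theta_0$-stable \emph{as an algebraic torus}, and your justification operates only at the level of rational points: from $\mathsf G_y(\f)=\mathsf H^E_y(\f_E)$ and the matching of the involutions on points you obtain $\theta_0(\mathsf S(\f_E))=\mathsf S(\f_E)$, but over a finite field the rational points of a torus are not Zariski dense, and a maximal torus is not in general determined by its group of rational points, so stability of $\mathsf S(\f_E)$ does not formally give stability of $\mathsf S$. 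Moreover, when $E/F$ has nontrivial residue extension, $\mathsf G_y$ (an $\f$-group) and $\mathsf H^E_y$ (an $\f_E$-group) are not the same algebraic group; they are related by $R_{\f_E/\f}$, so the assertion that $\theta$ on $\mathsf G_y$ ``induces'' $\theta_0$ on $\mathsf H^E_y$ compatibly with $\mathsf T\leftrightarrow\mathsf S$ is itself a statement about group schemes over the residue fields that must be proved, not a formal consequence of identifying buildings and parahoric subgroups.

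This is precisely what the paper's proof supplies. It factors $R_{E/F}$ through the maximal unramified subextension $K/F$: over the unramified layer the reductive quotient of the restriction of scalars is the restriction of scalars of the reductive quotient, so $\mathsf G_y=R_{\f_E/\f}\widetilde{\mathsf H}^K_y$, $\mathsf T=R_{\f_E/\f}\widetilde{\mathsf S}$, and the induced involution is the restriction of scalars of $\tilde\theta_0$, whence $\theta$-stability of $\mathsf T$ forces $\tilde\theta_0$-stability of $\widetilde{\mathsf S}$ at the algebraic level; for the totally ramified layer $E/K$ it invokes Lemma~2.1.1 of~\cite{AD} to identify $\widetilde{\mathsf H}^K_y$ with $\mathsf H^E_y$ and $\widetilde{\mathsf S}$ with $\mathsf S$ compatibly with the involutions. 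If you add an argument of this kind (or, in the specific application to $\GL_{n_0}$, note that the elliptic torus contains a regular semisimple rational point and hence is the connected centralizer of that point, so stability can be read off from the points), your proof closes; as written, the decisive implication that $\theta$-stability of $\mathsf T$ yields $\theta_0$-stability of $\mathsf S$ is asserted rather than proved.
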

\begin{proof}
Let $K/F$ be the maximal unramified subextension of $E/F$.  Let $\widetilde\bH = R_{E/K}\bH$ and $\widetilde\bS = R_{E/K}\bS$.  By the transitivity of restriction of scalars, $\bG = R_{K/F}\widetilde\bH$ and $\bT = R_{K/F}\widetilde\bS$.  Note that $\theta_0$ determines a $K$-involution $\tilde\theta_0$ of $\widetilde\bH$, which descends to an $\f_E$-involution of the $\f_E$-group $\widetilde{\mathsf H}^K_y$.

Let $\widetilde{\mathsf S}$ be the $k_E$-torus in $\widetilde{\mathsf H}^K_y$ determined by $\widetilde\bS$.  Since $K/F$ is unramified, it follows that $\mathsf G_y =  R_{\f_E/\f}\widetilde{\mathsf H}^K_y$ and $\mathsf T =  R_{\f_E/\f}\widetilde{\mathsf S}$.  Moreover, the involution of $\mathsf G_y$ determined by the involution $\tilde\theta_0$ of $\widetilde{\mathsf H}^K_y$ is precisely $\theta$.  Since $\mathsf T$ is $\theta$-stable, it follows that $\widetilde{\mathsf S}$ must be $\tilde\theta_0$-stable.

 Since $E/K$ is totally ramified, it follows from Lemma~2.1.1 of~\cite{AD} that $\widetilde\bH(K)_y = \bH(E)_y$ and hence that $\widetilde{\mathsf H}^K_y = \mathsf H^E_y.$  Similarly, $\widetilde{\mathsf S} = \mathsf S$.  It is easily seen that the actions of $\tilde\theta_0$ on $\widetilde{\mathsf H}^K_y$ and $\theta_0$ on $\mathsf H^E_y$ coincide under the above identification.  Thus since $\widetilde{\mathsf S}$ is $\tilde\theta_0$-stable, it follows that $\mathsf S$ is $\theta_0$-stable.
 
By Lemmas~\ref{lem:lifting-tori} and~\ref{lem:stable-tori}, there exists $g\in\bH(E)_{y,0^+}$ such that $\bS' = g\bS g^{-1}$ is $\theta_0$-stable.  Let $\bT' = R_{E/F}\bS'\subset\bG$.  Then $\bT' = g\bT g^{-1}$, where $g$ here is viewed as an element of $G_{y,0^+} = \bH(E)_{y,0^+}$.  Moreover, since $\bS'$ is $\theta_0$-stable, $\bT'$ must be $\theta$-stable.
\end{proof}

\bibliographystyle{amsalpha}

\end{document}